\documentclass{article}

\usepackage[preprint]{neurips_2026}

\usepackage[utf8]{inputenc} % allow utf-8 input
\usepackage[T1]{fontenc}    % use 8-bit T1 fonts
\usepackage{hyperref}       % hyperlinks
\usepackage{url}            % simple URL typesetting
\usepackage{booktabs}       % professional-quality tables
\usepackage{amsfonts}       % blackboard math symbols
\usepackage{nicefrac}       % compact symbols for 1/2, etc.
\usepackage{microtype}      % microtypography
\usepackage{xcolor}         % colors
\usepackage{amsmath, amssymb, amsthm}
\usepackage{tikz-cd}
\usepackage{enumitem}
\usepackage{graphicx}

\newtheorem{definition}{Definition}
\newtheorem{proposition}{Proposition}
\newtheorem{theorem}{Theorem}
\newtheorem{lemma}{Lemma}
\newtheorem{remark}{Remark}
\newtheorem{assumption}{Assumption}
\newtheorem{corollary}{Corollary}

\AtBeginEnvironment{algorithm}{\SetInd{.5em}{1em}}
\usepackage[boxruled]{algorithm2e}
\usepackage{algorithmic}

\title{Monotone Inclusion Approach to Weakly Monotone Discrete-Time Finite-Horizon Mean-Field Games\thanks{Research of first and second authors was supported in part by the AFOSR Grant FA9550-24-1-0152}}

\author{%
U\u{g}ur Ayd{\i}n \\ Department of Electrical and Computer Engineering \\ University of Illinois Urbana-Champaign \\ Urbana, IL 61801, USA \\ \texttt{uaydin2@illinois.edu} \\ \And Tamer Ba\c{s}ar \\ Department of Electrical and Computer Engineering \\ University of Illinois Urbana-Champaign \\ Urbana, IL 61801, USA \\ \texttt{basar1@illinois.edu} \\ \And Naci Saldi \\ Department of Mathematics \\ Bilkent University \\ \c{C}ankaya, Ankara 06800, Turkey \\ \texttt{naci.saldi@bilkent.edu.tr} \\ }

\begin{document}

\maketitle

\begin{abstract}
We revisit the problem of computing mean-field equilibria (MFEs) in discrete-time, monotone, finite-horizon mean-field games (MFGs). We show that, when the transition kernel is independent of the state-measure term and the reward function satisfies the usual weak monotonicity condition and is Lipschitz continuous, anchored proximal   gradient descent methods can be used to compute a monotone MFE. We also establish last-iterate convergence results for these methods.
Our approach relies on formulating the computation problem as an optimization problem over the space of occupation measures. Using this formulation, we show that the problem is equivalent to a class of constrained Lipschitz monotone inclusion problems. We then apply iterative methods for this monotone inclusion formulation to derive a tractable algorithm.
The resulting algorithm achieves a convergence rate of \(O(1/\sqrt{T})\) after \(T\) iterations, without requiring any regularization. This rate holds even in the absence of a uniqueness assumption for the corresponding MFE.
\end{abstract}

\section{Introduction}
Mean-field games (MFGs) provide a powerful framework for approximating solutions to multi-agent problems with identical agents. The theory of MFGs was introduced independently by Lasry and Lions in \cite{LaLi07}, who coined the term ``mean-field games,'' and by Huang, Malham\'{e}, and Caines in \cite{HuMaCa06}, who formulated the problem from the viewpoint of stochastic dynamic games. These foundational works study continuous-time noncooperative differential games with a large but finite number of agents, where the influence of each individual agent on the population becomes asymptotically negligible as the number of agents grows. In this paper, we consider the discrete-time counterpart of this framework, introduced in \cite{SaBaRaSIAM}.

MFGs reduce multi-agent problems with identical agents to an effective single-agent problem and provide approximate solutions to general Markov games, for which the existence of a solution is not always guaranteed \cite{SaBaRaSIAM}. Motivated by the growing interest in multi-agent systems, computational methods for MFGs have also attracted significant attention \cite{guo2019learning}. However, the dynamics of MFGs generally do not satisfy any natural contraction property. In fact, computing equilibria in most MFGs is PPAD-complete \cite{yardim2024mean}, suggesting that the problem is computationally intractable in general.

In this work, we revisit the learning problem for monotone finite-horizon mean-field games. We assume only that the transition probability is independent of state measures and the reward function is Lipschitz continuous on the space of state measures and satisfies the so-called weak monotonicity condition. Under these assumptions, we reformulate the problem as a minimization problem over the space of occupation measures on \(S \times A\), where \(S\) denotes the state space and \(A\) the action space. By expressing the reward function as an operator of the occupation measure, we show that the problem of finding a mean-field equilibrium (MFE) can be reformulated as a monotone inclusion problem of the form
\(
\mathbf{0} \in F(\mathbf{z}) + A(\mathbf{z}),
\)
where \(F\) is a single-valued, monotone, and Lipschitz operator, and \(A\) is a maximal monotone operator defined over the space of feasible occupation measures.

The inclusion problem $\pmb 0 \in F(\pmb z)+A(\pmb z)$ in general cannot be solved via vanilla descent methods \cite{daskalakis2018limit}. To design iterative an algorithm for this inclusion problem, in the optimization literature, it is well known that one needs further structure, such as optimisim, as in extragradient methods, or accelaration methods, as in anchoring \cite{golowich2020last} \cite{ryu2019ode}. For simplicity, we will merely work under anchoring in this work. This will allow us to present a completely tractable algorithm by using the recent result \cite[Theorem 2]{cai2026last} with convergence guarantees without relying on contraction or regularizers.

Two primary structural conditions are commonly used for computational purposes: contractivity and monotonicity. Unfortunately, monotonicity-based approaches often require additional regularization structure and typically apply only in fairly restrictive settings. In the MFG literature, monotonicity conditions are most often used to establish uniqueness of equilibrium \cite{saldi2020approximate}\cite{perolat2021scaling}. Beyond uniqueness, such conditions have also found applications in the mean-field reinforcement learning (MFRL) literature \cite{zhang2023learning}\cite{isobe2024last}. In this paper, we assume only that the reward function is weakly monotone and Lipschitz continuous on the space of state measures.

Previous work on last-iterate convergence for such methods often relies on proximal techniques, for example, by tracking the proximal operator as the base point changes \cite{isobe2024last}. In contrast, we show that the anchoring mechanism allows us to avoid introducing additional regularization terms, which can otherwise lead to deviations from the true MFE. We also establish a convergence rate for the objective function. To the best of our knowledge, this is the first algorithm in the literature for the non-regularized, non-contractive setting that admits an explicit convergence rate. We further emphasize that our framework allows for the presence of multiple MFEs, and our convergence guarantee remains valid even in this non-unique setting.

\subsection{Related Work}

The main assumption imposed in this work is the so-called monotonicity condition. Monotonicity conditions are widely used in the study of non-stationary MFEs, as they are among the few structural assumptions known to guarantee uniqueness of MFE (\cite{saldi2020approximate,perolat2021scaling}). Nevertheless, the ``weak'' monotonicity condition by itself is generally insufficient to ensure uniqueness. In the computational literature, monotonicity has mainly been exploited in algorithms based on policy updates (\cite{isobe2024last,zhang2023learning}). By contrast, we formulate the problem as an optimization problem over the space of measures and develop an algorithm directly on the space of occupation measures.

Representing the underlying MFE problem as an optimization problem over the corresponding space of occupation measures is a standard technique in the study of Markov decision processes (\cite{puterman2014markov}). Although the optimization formulation was not explicitly used there, occupation measures were first employed in the context of discrete-time MFGs to establish the existence of MFEs in \cite{SaBaRaSIAM}. Later, \cite{guo2022optimization} used the associated optimization problem to propose a framework for solving finite-horizon MFEs. However, the algorithms developed in that work are intractable and rely on unknown initialization constraints as well as higher-order differentiability assumptions on the system components. For ``linear'' MFGs, \cite{saldi2026linear} also developed optimization-based algorithms for computing MFEs using occupation measures. None of these works provides an algorithm with an explicit convergence rate.

More recently, the works \cite{anahtarci2025maximum,alkir2025inverse, guo2022optimization,anahtarci2025kernel} developed reinforcement learning algorithms based on the occupation measure approach. The key distinction between our method and these previous results is that those works formulate the optimization problem through the space of policies. As a consequence, the required subproblems are generally intractable. In contrast, we formulate the problem directly as an optimization problem over occupation measures and develop an iterative algorithm that operates entirely in the space of occupation measures, without invoking policies even at the level of subroutines. This subtle but important distinction allows us to reformulate the problem as a monotone inclusion problem over occupation measures.

By representing MFE as an optimization problem, we identify an operator \(F\) on the space of occupation measures. Since the problem must be restricted to feasible occupation measures, we are naturally led to the constrained monotone inclusion
\(
\mathbf{0} \in F(\mathbf{z}) + A(\mathbf{z}),
\)
where \(A\) is a set-valued operator that captures the constraints induced by the occupation measure formulation. The unconstrained case, corresponding to \(A \equiv 0\), has recently attracted considerable attention because of its close connection to saddle-point problems arising in the training of generative adversarial networks (GANs) (\cite{golowich2020last};\cite{ryu2019ode}). The constrained case \(A \not\equiv 0\), however, presents additional technical challenges (\cite{cai2023doubly}). Even in the unconstrained setting, standard proximal gradient methods may fail to converge (\cite{daskalakis2018limit}). To address this issue, methods based on \emph{optimism} and \emph{acceleration} have been incorporated into proximal gradient schemes (\cite{yoon2021accelerated}).

Finally, we briefly note that existing algorithms for weakly monotone MFGs are typically based on policy updates, which are often implemented through KL-divergence regularization or related techniques (\cite{zhang2023learning};\cite{isobe2024last};\cite{perolat2021scaling}). By contrast, the algorithm introduced in this work operates solely on the space of occupation measures and therefore avoids the use of auxiliary policy-update procedures such as proximal methods through KL divergence regularization.

\subsection{Contributions}\label{sect:cont}

Our main contributions are summarized as follows:
\begin{enumerate}
    \item We provide a maximal monotone set-inclusion characterization of MFEs under weak monotonicity of the reward function.
    \item When the transition probability kernel is independent of the population distribution, we show that the monotone set-inclusion problem corresponding to the MFE problem can be solved in polynomial time using the anchored proximal algorithm introduced by \cite{cai2026last}. It appears that this is the first fully tractable and provably convergent algorithm for computing exact finite-horizon mean-field equilibria in discrete-time mean-field games.
    \item Tractability of our algorithm relies on solving a convex quadratic program exactly at each iteration, which can be done in polynomial time. Using the interior-point primal--dual path-following algorithm of \cite{monteiro1989interior}, we provide an exact arithmetic complexity bound for solving this quadratic program.
    \item To the best of our knowledge, the algorithm introduced here is the first in the discrete-time MFG literature to achieve an explicit convergence rate in the non-regularized setting while allowing for multiple mean-field equilibria. In addition, under our assumptions, we construct an example of an MFG showing that the upper bound of our algorithm is tight, in the sense that it is matched by a corresponding lower bound for this algorithm.
\end{enumerate}

A comparison of our contributions we have listed above with (some) of the previous literature can be found in the table below.

\begin{center}
\renewcommand{\arraystretch}{1.4}
\resizebox{\linewidth}{!}{%
\begin{tabular}{llllll}
\hline
Work & Assumption & Type & Rate of Convergence & Uniqueness & Tractable\\
\hline
\cite{guo2019learning} & Contraction & Regularized & Exponential & Yes & Yes \\
\cite{anahtarci2020value} & Contraction & Nonregularized & Exponential & Yes & Yes\\
\cite{anahtarci2023q} &  Contraction & Regularized & Exponential & Yes & Yes\\
\cite{cui2021approximately} & Contraction & Regularized & Exponential & Yes & Yes \\
\cite{yardim2023policy} & Contraction & Regularized & Exponential & Yes & Yes\\
\cite{zhang2023learning} & Strict Mono. & Regularized & $O(\log^2 T/\sqrt T)$ & Yes & Yes \\
\cite{zhang2024stochastic} & Contraction & Regularized & Exponential & Yes & Yes \\
\cite{guo2024mf} & Differentiable & Nonregularized & N/A & No & No \\
\cite{dong2025last} & Weak Mono. & Regularized & $O(1/T)$ & No & Yes\\
\cite{isobe2024last} & Weak Mono. & Nonregularized& N/A & No & No \\
\bf{This work} & \bf{Weak Mono.} & \bf{Nonregularized} & $O(1/\sqrt T)$ & \bf{No} & \bf{Yes} \\
\hline
\end{tabular}
}
\end{center}

\begin{remark}
    The work \cite{dong2025last} establishes an \(O(1/T)\) convergence rate when one has oracle access to the reward function and the transition kernel. In addition, \cite{isobe2024last} shows that exponential convergence is achievable in the regularized setting.
\end{remark}

\section{Preliminaries}

\subsection{Discrete-Time Mean-Field Games}\label{sect:2.1}

In this subsection, following the framework of \cite{SaBaRaSIAM}, we introduce the setup for discrete-time mean-field games. A discrete-time finite-horizon mean-field game is specified by a tuple \((S,A,r,P,\mu_0,H)\), where
\begin{enumerate}
    \item \(S\) is a finite state space.
    \item \(A\) is a finite action space.
    \item \(r:S \times A \times \mathcal P(S) \to \mathbb R\) is the one-stage reward function, where \(\mathcal P(S)\) denotes the space of probability measures on \(S\).
    \item \(P:S \times A \times \mathcal P(S) \to \mathcal P(S)\) is the one-stage transition kernel.
    \item \(\mu_0 \in \mathcal P(S)\) is the initial state distribution.
    \item \(H\) is the horizon length.
\end{enumerate}

Each element \(\mu \in \mathcal P(S)\) will be referred to as a \emph{state measure}, representing the distribution of states across a continuum of identical agents. A \emph{state-measure flow} is a tuple of state measures \(\pmb{\mu} = (\mu_t)_{t=0}^{H-1}\). Throughout the paper, whenever an MFG is fixed, we implicitly assume that every state-measure flow starts from the initial state measure \(\mu_0\) specified in the definition of the game.

For a given finite horizon \(H \in \mathbb{N}\), the objective in an MFG is to find a policy flow \(\pmb{\pi}^* = (\pi_h^*)_{h=0}^{H-1}\) and a state-measure flow \(\pmb{\mu}^* = (\mu_h^*)_{h=0}^{H-1}\) such that the objective
\begin{equation}\label{eq:base}
J(\pmb{\pi}^*,\pmb{\mu}^*) := \mathbb{E}\left[\sum_{h=0}^{H-1} r(x_h,a_h,\mu_h^*)\right],
\end{equation}
where \(x_{h+1} \sim P(\cdot \mid x_h,a_h,\mu_h^*)\), \(a_h \sim \pi_h^*(\cdot \mid x_h)\), and \(x_0 \sim \mu_0\), satisfies
\begin{equation}\label{eq:first}
J(\pmb{\pi},\pmb{\mu}^*) \leq J(\pmb{\pi}^*,\pmb{\mu}^*)
\end{equation}
for every policy flow \(\pmb{\pi}\), and the state-measure flow \(\pmb{\mu}^*\) satisfies the \emph{consistency condition}
\begin{equation}\label{eq:last}
\mu_{h+1}^*(\cdot)
=
\sum_{s \in S}\sum_{a \in A}
P(\cdot \mid s,a,\mu_h^*)\,\pi_h^*(a \mid s)\,\mu_h^*(s),
\qquad h=0,\dots,H-2.
\end{equation}
When \(r\) and \(P\) are independent of the state-measure component in \(\mathcal{P}(S)\), the problem reduces to a standard Markov decision process. If the pair \((\pmb{\pi}^*,\pmb{\mu}^*)\) satisfies \eqref{eq:first} and \eqref{eq:last}, then we say that \((\pmb{\pi}^*,\pmb{\mu}^*)\) is a \emph{mean-field equilibrium}.

The main difficulty in computing an MFE arises from the dependence of \(r\) and \(P\) on the state-measure component \(\mathcal P(S)\), together with the consistency requirement in \eqref{eq:last}. Because of this additional dependence and the associated evolution condition, there is generally no natural contractive dynamical system available for computational purposes. We also emphasize that a discrete-time MFG is, in general, neither a standard game nor a standard control problem, but rather lies somewhere in between.

For our purposes, we impose the following assumption on the MFG.

\begin{assumption}[Weak monotonicity]\label{weakmon}
For every pair of policy flows \(\pmb{\pi}=(\pi_h)_{h=0}^{H-1}\) and \(\pmb{\tilde{\pi}}=(\tilde{\pi}_h)_{h=0}^{H-1}\), it holds that
\begin{equation}\label{mon}
\sum_{h=0}^{H-1} \sum_{s\in S}\sum_{a\in A}
\Bigl(r(s,a,\mu_h^{\pmb{\pi}})-r(s,a,\mu_h^{\pmb{\tilde{\pi}}})\Bigr)
\Bigl(\rho_h^{\pmb{\pi}}(s,a)-\rho_h^{\pmb{\tilde{\pi}}}(s,a)\Bigr)\le 0,
\end{equation}
where $\pmb \mu^{\pmb \pi}$ and $\pmb \mu^{\pmb{\tilde \pi}}$ are state-measure flows such that
\[
\rho_h^{\pmb{\pi}}(s,a)=\pi_h(a\mid s)\mu_h^{\pmb{\pi}}(s),
\qquad
\rho_h^{\pmb{\tilde{\pi}}}(s,a)=\tilde{\pi}_h(a\mid s)\mu_h^{\pmb{\tilde{\pi}}}(s).
\]
\end{assumption}

We say that the monotonicity condition is \emph{strict} if \eqref{mon} holds with equality only when \(\pmb{\pi}=\pmb{\tilde{\pi}}\). In general, strict monotonicity \cite[Proposition 1]{perolat2021scaling}, or alternatively additional separability conditions on the reward function \cite{saldi2020approximate}, is needed to ensure uniqueness of the MFE. Therefore, under Assumption \ref{weakmon}, the finite-horizon MFE need not be unique in general.

We further note that Assumption \ref{weakmon} guarantees the existence of an MFE; see \cite[Proposition 1]{perolat2021scaling}. Without monotonicity, existence typically requires additional regularity conditions on the system components \(r\) and \(P\), most notably uniform continuity; see \cite{SaBaRaSIAM}.

\begin{assumption}\label{ass:2}
    The transition kernel \(P\) is independent of the state-measure component; namely,
    \[
    P(\cdot \mid s,a,\mu)=P(\cdot \mid s,a)
    \qquad \text{for all } \mu \in \mathcal{P}(S).
    \]
\end{assumption}

Assumption \ref{ass:2}, together with Assumption \ref{weakmon}, is frequently imposed to guarantee uniqueness of the MFE \cite{saldi2020approximate}. It is also adopted in \cite{isobe2024last} to develop an algorithm in the weakly monotone setting. On the other hand, under strict monotonicity, the framework of \cite{zhang2023learning} does not require Assumption \ref{ass:2}. In our setting, the main role of Assumption \ref{ass:2} is that it renders the constraint set in the optimization formulation a convex polytope, a property that is essential for the development of a fully tractable algorithm. This assumption also occurs commonly in practice, see \cite{weintraub2008markov,subramanian2019reinforcement}.

\begin{assumption}[Lipschitz continuity]\label{lip}
    For all \((x,a) \in S \times A\) and all \(\mu,\tilde{\mu} \in \mathcal{P}(S)\), we have
    \[
    |r(x,a,\mu)-r(x,a,\tilde{\mu})|
    \leq L_r \|\mu-\tilde{\mu}\|_{\mathrm{TV}},
    \]
    where \(\|\cdot\|_{\mathrm{TV}}\) denotes the total variation norm on \(\mathcal{P}(S)\).
\end{assumption}

The Lipschitz continuity assumption imposed above is weaker than the assumptions typically used in contractive approaches for the nonlinear case, where \(r\) and \(P\) are also required to be Lipschitz continuous with respect to the state and action variables; see \cite{anahtarci2023q,yardim2023policy,guo2019learning,cui2021approximately}. In contrast, the work \cite{zhang2023learning} does not require any Lipschitz continuity assumption under strict monotonicity. As noted earlier, since we establish convergence rates for first-order methods, it is natural to impose additional regularity conditions on the system components, such as Lipschitz continuity.

\begin{remark}
    For simplicity, we present our main results in the time-homogeneous setting; that is, for all \(h=0,1,\dots,H-1\),
    \(
    P_h(\cdot \mid \cdot,\cdot) \equiv P(\cdot \mid \cdot,\cdot)\)
    and 
    \(r_h(\cdot,\cdot,\cdot) \equiv r(\cdot,\cdot,\cdot).\)
    However, our results continue to hold in the time-inhomogeneous setting as well.
\end{remark}

\subsection{Monotone Operators and Inclusions}

Our computational results rely on representing an MFE as an optimization problem involving monotone operators. The purpose of this subsection is to provide the basic background on monotone operators and monotone inclusions needed to make the presentation as self-contained as possible.

For our purposes, all monotone operators are finite-dimensional. Throughout the paper, unless stated otherwise, every finite-dimensional Euclidean space \(\mathbb{R}^d\) is equipped with the Euclidean norm \(\|\cdot\|_2\).

\begin{definition}[Monotone Operator]
Let $\mathcal H$ be a finite-dimensional Hilbert space with inner product
$\langle \cdot,\cdot\rangle$, and let $A:\mathcal H \rightrightarrows \mathcal H$
be a set-valued operator. The graph of $A$ is
\(
\operatorname{gra}(A)
:=
\{(\pmb x,\pmb u)\in \mathcal H\times \mathcal H : \pmb u\in A(\pmb x)\}.
\) 
The operator $A$ is called \emph{monotone} if
\[
\langle \pmb u-\pmb v,\,\pmb x- \pmb y\rangle \ge 0
\qquad
\forall (\pmb x,\pmb u),(\pmb y,\pmb v)\in \operatorname{gra}(A).
\]
\end{definition}

\begin{definition}[Maximal Monotone Operator]
The set-valued operator $A:\mathcal H \rightrightarrows \mathcal H$ is called \emph{maximal monotone} if it is monotone and
there is no monotone set-valued operator $B:\mathcal H \rightrightarrows \mathcal H$
such that
\(
\operatorname{gra}(A)\subsetneq \operatorname{gra}(B).
\)
\end{definition}

The following remark says that the normal cone of a compact convex set about a given point is a maximal monotone operator, which will be the case of interest for us.

\begin{remark}\label{rem:1}[Normal cone is maximal monotone]
Let $\mathcal K\subset \mathcal H$ be a nonempty compact convex set, and define its normal
cone operator by
\[
N_{\mathcal K}(\pmb x)
:=
\begin{cases}
\{\,\pmb g\in \mathcal H : \langle \pmb g,\pmb y-\pmb x\rangle \le 0 \ \forall \pmb y\in \mathcal K\,\}, & \pmb x\in \mathcal K,\\[1mm]
\varnothing, & \pmb x\notin \mathcal K.
\end{cases}
\]
Then, $N_{\mathcal K}$ is a maximal monotone operator.

Indeed, since $\mathcal K$ is nonempty, compact, and convex, it is in particular closed and convex.
Hence the indicator function
\[
I_{\mathcal K}(\pmb x)
:=
\begin{cases}
0, & \pmb x\in \mathcal K,\\
+\infty, & \pmb x\notin \mathcal K
\end{cases}
\]
is proper, lower semicontinuous, and convex \cite{rockafellar1970convex}. Moreover,
\[
\partial I_{\mathcal K}(\pmb x)=N_{\mathcal K}(\pmb x),
\]
where $\partial I_{\mathcal K}$ denotes the convex subdifferential \cite[Example 16.12]{bauschke2020correction}. Since the subdifferential
of any proper lower semicontinuous convex function is maximal monotone, it follows
that $N_{\mathcal K}$ is maximal monotone \cite[Example 20.41]{bauschke2020correction}.
\end{remark}

For a given maximal monotone set-valued operator $A : \mathbb R^d \rightrightarrows \mathbb R^d$ and a single-valued Lipschitz monotone operator $F : \mathbb R^d \to \mathbb R^d$, in this paper we are interested in finding $\pmb z \in \mathbb R^d$ such that
\(
\pmb 0 \in F(\pmb z) + A(\pmb z),
\)
which is called the (constrained) \emph{monotone inclusion problem}.

\section{Discrete-Time Mean-Field Equilibrium as a Monotone Inclusion Problem}\label{sect:3}

Let \(\mathrm{MFG}_{\mu_0}=(S,A,r,P,\mu_0,H)\) be a finite-horizon mean-field game. In this section, we represent an MFE of \(\mathrm{MFG}_{\mu_0}\) as a solution to a monotone inclusion problem. To do so, we formulate the MFE problem as an optimization problem over the space of occupation measures. In the next section, we show that this optimization problem can be solved by tractable algorithms, and we present a tractable algorithm together with convergence guarantees.

To represent an MFE as a monotone inclusion problem, we first formulate the MFE problem as an optimization problem. For this purpose, we introduce additional notation and definitions related to occupation measures.

\begin{definition}
Let \(\pmb{\pi}\) be a policy flow, and let \(\pmb{\mu}^{\pmb \pi}=(\mu_h^{\pmb \pi})_{h=0}^{H-1}\) be a state-measure flow associated with \(\pmb{\pi}\). For each \(h=0,\dots,H-1\), the joint probability measure on \(S \times A\) defined by
\[
\rho_h^{\pmb{\pi}}(s,a):=\pi_h(a\mid s)\,\mu_h^{\pmb{\pi}}(s)
\]
is called the \emph{occupation measure} corresponding to \(\pmb{\pi}\) at time \(h\). The collection \(\pmb{\rho}^{\pmb{\pi}}=(\rho_h^{\pmb{\pi}})_{h=0}^{H-1}\) is called the corresponding \emph{occupation flow}.
\end{definition}

For a given horizon length \(H\), we define the ambient space containing all occupation flows corresponding to admissible policies as the finite-horizon occupation-flow polytope
\[
\mathcal{K}:=
\left\{
\pmb{\rho}=(\rho_h)_{h=0}^{H-1} \in \prod_{h=0}^{H-1}\mathbb{R}_+^{S\times A}:
\begin{array}{l}
\displaystyle \sum_{a\in A}\rho_0(s,a)=\mu_0(s)\quad \forall s\in S,\\[2mm]
\displaystyle \sum_{a\in A}\rho_{h+1}(s',a)=
\sum_{s\in S}\sum_{a\in A}P(s'\mid s,a)\rho_h(s,a)\\
\hfill \forall s'\in S,\ \forall h=0,\dots,H-2
\end{array}
\right\}.
\]

In general, a policy flow need not determine a unique occupation flow. Conversely, a given occupation flow may correspond to more than one policy flow.

\begin{definition}
For a given occupation flow \(\pmb{\rho}\in\mathcal{K}\), the \emph{induced state flow} is defined by
\[
\mu_h^{\pmb{\rho}}(s):=\sum_{a\in A}\rho_h(s,a),
\qquad h=0,\dots,H-1.
\]
\end{definition}

It is straightforward to verify that if \((\pmb{\pi}^*,\pmb{\mu}^*)\) is a finite-horizon MFE, then there exists an occupation flow corresponding to \(\pmb{\pi}^*\) whose induced state flow is exactly \(\pmb{\mu}^*\).

\begin{lemma}\label{lem:1}
    Suppose that Assumption \ref{ass:2} holds. Then, \(\mathcal{K}\) is a nonempty, compact, convex polytope in the Euclidean space under consideration.
\end{lemma}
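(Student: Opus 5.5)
Since Assumption \ref{ass:2} makes $P(s'\mid s,a)$ independent of the state measure, every constraint defining $\mathcal K$ is affine in $\pmb\rho$. The plan is therefore to check the four properties in turn: polyhedral structure, boundedness (which gives compactness), convexity, and nonemptiness.

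\emph{Polytope, closedness and convexity.} I will view $\mathcal K$ as a subset of the finite-dimensional Euclidean space $\mathbb R^{H\times|S|\times|A|}$. It is then the intersection of
\begin{itemize}
\item the closed convex cone $\prod_h \mathbb R_+^{S\times A}$ (finitely many half-spaces $\rho_h(s,a)\ge 0$), and
\item finitely many hyperplanes, namely $\sum_a\rho_0(s,a)=\mu_0(s)$ and $\sum_a\rho_{h+1}(s',a)-\sum_{s,a}P(s'\mid s,a)\rho_h(s,a)=0$.
\end{itemize}
The coefficients of these hyperplanes are fixed numbers precisely because of Assumption \ref{ass:2}; this is the only place the assumption enters. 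Hence $\mathcal K$ is a polyhedron, so it is closed and convex.

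\emph{Boundedness.} I will show by induction on $h$ that $\sum_{s,a}\rho_h(s,a)=1$.
\begin{itemize}
\item For $h=0$, summing the first constraint over $s$ gives $\sum_{s,a}\rho_0(s,a)=\sum_s\mu_0(s)=1$.
\item For the induction step, summing the second constraint over $s'$ and using $\sum_{s'}P(s'\mid s,a)=1$ gives $\sum_{s',a}\rho_{h+1}(s',a)=\sum_{s,a}\rho_h(s,a)=1$.
\end{itemize}
Together with nonnegativity, this gives $0\le\rho_h(s,a)\le 1$ for every entry. So $\mathcal K$ is bounded, hence compact by Heine--Borel, and a bounded polyhedron is a polytope.

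\emph{Nonemptiness.} I will fix any policy flow, for instance the uniform one $\pi_h(a\mid s)=1/|A|$, and define the measures recursively:
\[
\mu_0 \text{ given},\qquad \rho_h(s,a)=\pi_h(a\mid s)\mu_h(s),\qquad \mu_{h+1}(s')=\sum_{s,a}P(s'\mid s,a)\rho_h(s,a).
\]
Then $\rho_h\ge 0$ and $\sum_a\rho_h(s,a)=\mu_h(s)$, so both families of equality constraints hold by construction. This gives an element of $\mathcal K$.

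I expect no genuine obstacle here. The only point needing care is the boundedness step, since the definition of $\mathcal K$ never states total mass one directly and it has to be derived inductively through the stochasticity of $P$.
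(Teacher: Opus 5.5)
Your proposal is correct and follows the paper's proof essentially step by step. The paper likewise writes $\mathcal K$ as the intersection of the orthant with finitely many hyperplanes, bounds it via the inductive total-mass-one argument using $\sum_{s'}P(s'\mid s,a)=1$, and shows nonemptiness by the same construction of a policy-induced occupation flow (packaged there as a separate lemma).
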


Assumption \ref{ass:2} is crucial for showing that \(\mathcal{K}\) is convex. For our purposes, the importance of this convexity property is that it implies that the normal cone \(N_{\mathcal K}(\pmb{x})\) is maximal monotone for every \(\pmb{x}\in\mathcal{K}\); see Remark \ref{rem:1}. Although \cite{guo2024mf} also formulates an optimization problem based on occupation measures, it does not consider the setting of Assumption \ref{ass:2}. In the absence of Assumption \ref{ass:2}, the corresponding optimization problem for MFE typically becomes nonlinear, and this nonlinearity is difficult to handle in full generality.

\begin{definition}
The operator \(F:\mathcal{K}\to\mathbb{R}^{H|S||A|}\), defined coordinatewise by
\[
[F(\pmb{\rho})]_{h,s,a}:=-\,r\bigl(s,a,\mu_h^{\pmb{\rho}}\bigr),
\]
is called the \emph{mean-field occupation operator}.
\end{definition}

It is immediate that \(F\) is a single-valued operator. For our purposes, we will also need \(F\) to be monotone and Lipschitz continuous on \(\mathcal{K}\).

\begin{lemma}\label{F-Lip}
    Suppose Assumptions \ref{weakmon}--\ref{lip} hold. Then, for all \(\pmb{\rho},\pmb{\tilde{\rho}} \in \mathcal K\),
    \[
    \|F(\pmb{\rho})-F(\pmb{\tilde{\rho}})\|_2
    \le |S||A|\,L_r\,\|\pmb{\rho}-\pmb{\tilde{\rho}}\|_2.
    \]
\end{lemma}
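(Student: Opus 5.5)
We bound $F(\pmb\rho)-F(\pmb{\tilde\rho})$ coordinatewise with Assumption \ref{lip}, then sum the squares. Only Assumption \ref{lip} and the linear relation between $\pmb\rho$ and its induced state flow are needed; Assumptions \ref{weakmon} and \ref{ass:2} play no role in the estimate itself.

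\textbf{Coordinatewise bound.} Fix $h$, $s$, $a$. By definition of $F$ and Assumption \ref{lip},
\[
\bigl|[F(\pmb\rho)]_{h,s,a}-[F(\pmb{\tilde\rho})]_{h,s,a}\bigr|
=\bigl|r(s,a,\mu_h^{\pmb\rho})-r(s,a,\mu_h^{\pmb{\tilde\rho}})\bigr|
\le L_r\,\|\mu_h^{\pmb\rho}-\mu_h^{\pmb{\tilde\rho}}\|_{\mathrm{TV}}.
\]
We take the total variation norm on the finite set $S$ to be (at most) the $\ell_1$ norm, $\|\mu-\tilde\mu\|_{\mathrm{TV}}\le\sum_{s'}|\mu(s')-\tilde\mu(s')|$. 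If the convention carries a factor $1/2$, the bound only improves.

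\textbf{From state flows to occupation flows.} The induced state flow is a marginal, so by the triangle inequality
\[
\sum_{s'}|\mu_h^{\pmb\rho}(s')-\mu_h^{\pmb{\tilde\rho}}(s')|\le \sum_{s',a'}|\rho_h(s',a')-\tilde\rho_h(s',a')|=\|\rho_h-\tilde\rho_h\|_1.
\]
Cauchy--Schwarz on $\mathbb R^{S\times A}$ then gives $\|\rho_h-\tilde\rho_h\|_1\le\sqrt{|S||A|}\,\|\rho_h-\tilde\rho_h\|_2$. Hence every coordinate of the difference at time $h$ is bounded by $L_r\sqrt{|S||A|}\,\|\rho_h-\tilde\rho_h\|_2$.

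\textbf{Summing over coordinates.} The block at time $h$ has $|S||A|$ coordinates, so
\[
\|F(\pmb\rho)-F(\pmb{\tilde\rho})\|_2^2\le\sum_{h}|S||A|\cdot L_r^2|S||A|\,\|\rho_h-\tilde\rho_h\|_2^2=(|S||A|L_r)^2\|\pmb\rho-\pmb{\tilde\rho}\|_2^2 .
\]
Taking square roots gives the claim.

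The only delicate point is bookkeeping: the bound must stay block-diagonal in $h$, so that no extra factor of $H$ appears. This holds because coordinate $(h,s,a)$ of $F$ depends only on $\rho_h$. Beyond that, one only has to check the normalization of the TV norm against the $\ell_1$ norm.
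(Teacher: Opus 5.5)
Your proof is correct and follows the same route as the paper's: coordinatewise use of Assumption \ref{lip}, the marginal inequality $\|\mu_h^{\pmb\rho}-\mu_h^{\pmb{\tilde\rho}}\|_1\le\|\rho_h-\tilde\rho_h\|_1$, an $\ell_1$--$\ell_2$ comparison, and summation over the blocks $h$. Your constants are tighter and chain correctly, whereas the paper's do not: it uses $|S|^2|A|^2$ for the coordinate sum and $|S||A|$ for the norm comparison, which chained literally give only $|S|^2|A|^2L_r$, while your $|S||A|$ and $\sqrt{|S||A|}$ yield exactly the stated $|S||A|\,L_r$.
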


\begin{lemma}\label{F-mon}
    Suppose Assumptions \ref{weakmon}--\ref{lip} hold. Then, the operator \(F\) is monotone on \(\mathcal K\); that is,
    \[
    \langle F(\pmb{\rho})-F(\pmb{\tilde{\rho}}),\,\pmb{\rho}-\pmb{\tilde{\rho}}\rangle \ge 0
    \qquad \forall \pmb{\rho},\pmb{\tilde{\rho}}\in \mathcal K.
    \]
\end{lemma}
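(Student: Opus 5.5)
The plan is to reduce the claim directly to Assumption \ref{weakmon}. Expanding the inner product coordinatewise gives
\[
\langle F(\pmb{\rho})-F(\pmb{\tilde{\rho}}),\,\pmb{\rho}-\pmb{\tilde{\rho}}\rangle
= -\sum_{h=0}^{H-1}\sum_{s\in S}\sum_{a\in A}\Bigl(r(s,a,\mu_h^{\pmb\rho})-r(s,a,\mu_h^{\pmb{\tilde\rho}})\Bigr)\Bigl(\rho_h(s,a)-\tilde\rho_h(s,a)\Bigr).
\]
It therefore suffices to show that the triple sum on the right is nonpositive. This is exactly the left-hand side of \eqref{mon}, provided we can realize $\pmb\rho$ and $\pmb{\tilde\rho}$ as occupation flows $\pmb\rho^{\pmb\pi}$ and $\pmb\rho^{\pmb{\tilde\pi}}$ of some policy flows, with induced state flows $\mu_h^{\pmb\rho}=\mu_h^{\pmb\pi}$.

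The key step is this realization. Given $\pmb\rho\in\mathcal K$, I would set $\mu_h(s):=\sum_a\rho_h(s,a)$ and define
\[
\pi_h(a\mid s):=\frac{\rho_h(s,a)}{\mu_h(s)} \quad\text{when } \mu_h(s)>0,
\]
and $\pi_h(\cdot\mid s)$ equal to an arbitrary fixed distribution (say uniform on $A$) when $\mu_h(s)=0$. Then $\rho_h(s,a)=\pi_h(a\mid s)\mu_h(s)$ holds in both cases, since when $\mu_h(s)=0$ nonnegativity forces $\rho_h(s,a)=0$.

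Next I would verify by induction on $h$ that $\pmb\mu$ is the state-measure flow generated by $\pmb\pi$ from $\mu_0$ under the kernel $P$, which does not depend on the measure by Assumption \ref{ass:2}. The base case $\mu_0=\sum_a\rho_0(\cdot,a)$ is the first constraint of $\mathcal K$. The inductive step is
\[
\mu_{h+1}(s')=\sum_{s,a}P(s'\mid s,a)\rho_h(s,a)=\sum_{s,a}P(s'\mid s,a)\pi_h(a\mid s)\mu_h(s),
\]
which is the consistency recursion \eqref{eq:last}. Hence $\pmb\mu^{\pmb\pi}=\pmb\mu^{\pmb\rho}$ and $\pmb\rho^{\pmb\pi}=\pmb\rho$, and likewise for $\pmb{\tilde\rho}$. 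This step uses that $P$ is measure-independent, so that the dynamics generated by $\pmb\pi$ are unambiguous and coincide with the constraints defining $\mathcal K$.

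Substituting these identifications, Assumption \ref{weakmon} gives that the triple sum is $\le 0$, so the inner product is $\ge 0$. I expect the only real subtlety to be the zero-mass states, where the policy is non-unique. Since $\rho_h(s,a)=0$ there regardless of the choice, and the reward argument depends only on $\mu_h$, any choice yields the same terms, so this is harmless. Assumption \ref{lip} is not needed for this argument.
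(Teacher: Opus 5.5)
Your proposal is correct and follows essentially the same route as the paper. You expand the inner product, realize $\pmb\rho$ and $\pmb{\tilde\rho}$ as occupation flows of disintegrated policy flows (the paper's Lemma~\ref{lem:b}, which you reprove inline by the same induction using Assumption~\ref{ass:2}), and invoke Assumption~\ref{weakmon}; your observations that zero-mass states are harmless and that Assumption~\ref{lip} is not needed are also accurate.
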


The main result of this section is the following.

\begin{theorem}\label{thrm:main}
    Suppose that Assumptions \ref{weakmon}--\ref{lip} hold. Let \((\pmb{\pi^*},\pmb{\mu^*})\) be a policy flow and a state-measure flow, respectively, and define
    \[
    \rho_h^*(s,a)=\pi_h^*(a\mid s)\mu_h^*(s).
    \]
    Then, \((\pmb{\pi^*},\pmb{\mu^*})\) is an MFE if, and only if,
    \begin{equation}\label{eq:vi-main}
    \langle F(\pmb{\rho^*}),\,\pmb{\rho}-\pmb{\rho^*}\rangle \ge 0
    \qquad \forall \pmb{\rho}\in \mathcal K.
    \end{equation}
\end{theorem}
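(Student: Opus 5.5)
The plan is to reduce both directions to the standard linear-programming characterization of optimal policies in a finite-horizon MDP over the occupation-flow polytope $\mathcal K$. Fix the candidate pair and freeze the mean-field term. For a fixed state-measure flow $\pmb\mu$, the reward $r(s,a,\mu_h)$ no longer depends on the agent's choice. By Assumption \ref{ass:2} the dynamics do not depend on $\pmb\mu$ either. Hence, for any policy flow $\pmb\pi$ with occupation flow $\pmb\rho^{\pmb\pi}\in\mathcal K$ generated by these dynamics from $\mu_0$, we have
\[
J(\pmb\pi,\pmb\mu)=\sum_{h}\sum_{s,a} r(s,a,\mu_h)\,\rho^{\pmb\pi}_h(s,a).
\]
The key auxiliary fact is that the map $\pmb\pi\mapsto\pmb\rho^{\pmb\pi}$ is onto $\mathcal K$. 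Given $\pmb\rho\in\mathcal K$, define $\pi_h(a\mid s)=\rho_h(s,a)/\mu_h^{\pmb\rho}(s)$ where $\mu_h^{\pmb\rho}(s)>0$, and take an arbitrary distribution otherwise. An induction on $h$, using the flow constraints in the definition of $\mathcal K$, shows that the state flow generated by this $\pmb\pi$ is $\pmb\mu^{\pmb\rho}$ and that its occupation flow is $\pmb\rho$. Consequently $\sup_{\pmb\pi}J(\pmb\pi,\pmb\mu)=\max_{\pmb\rho\in\mathcal K}\langle -F_{\pmb\mu},\pmb\rho\rangle$, where $[F_{\pmb\mu}]_{h,s,a}=-r(s,a,\mu_h)$.

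For the direction ($\Rightarrow$), let $(\pmb\pi^*,\pmb\mu^*)$ be an MFE. The consistency condition \eqref{eq:last}, with $P$ independent of $\mu$, says exactly that $\pmb\rho^*\in\mathcal K$ and $\pmb\mu^{\pmb\rho^*}=\pmb\mu^*$. The second statement uses $\sum_a\rho_h^*(s,a)=\mu_h^*(s)$, since $\pi_h^*(\cdot\mid s)$ is a probability vector. It follows that $F(\pmb\rho^*)=F_{\pmb\mu^*}$. Optimality \eqref{eq:first}, together with the identity above and the surjectivity onto $\mathcal K$, gives $\langle -F(\pmb\rho^*),\pmb\rho\rangle\le\langle -F(\pmb\rho^*),\pmb\rho^*\rangle$ for all $\pmb\rho\in\mathcal K$. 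This is exactly \eqref{eq:vi-main}.

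For the direction ($\Leftarrow$), \eqref{eq:vi-main} is only meaningful when $\pmb\rho^*\in\mathcal K$, since $F$ is defined on $\mathcal K$, and I will read the statement with this membership as implicit. I expect this membership to be the main obstacle: the statement does not separately assume that $\pmb\mu^*$ is generated by $\pmb\pi^*$, so consistency has to be extracted from $\pmb\rho^*\in\mathcal K$. From $\rho_h^*=\pi_h^*\mu_h^*\in\mathcal K$ I obtain $\sum_a\rho^*_h(s,a)=\mu^*_h(s)$. The flow constraint then becomes
\[
\mu^*_{h+1}(s')=\sum_{s,a}P(s'\mid s,a)\,\pi^*_h(a\mid s)\,\mu^*_h(s),
\]
which is precisely \eqref{eq:last}, and the time-zero constraint gives the initial condition $\mu_0$. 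In particular $\pmb\mu^{\pmb\rho^*}=\pmb\mu^*$, so $F(\pmb\rho^*)=F_{\pmb\mu^*}$.

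With consistency in hand, for any policy flow $\pmb\pi$ its occupation flow (under $\mu_0$ and $P$) lies in $\mathcal K$. Therefore
\[
J(\pmb\pi,\pmb\mu^*)=\langle -F(\pmb\rho^*),\pmb\rho^{\pmb\pi}\rangle\le\langle -F(\pmb\rho^*),\pmb\rho^*\rangle=J(\pmb\pi^*,\pmb\mu^*),
\]
where the inequality is \eqref{eq:vi-main}. This is \eqref{eq:first}.

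Assumptions \ref{weakmon} and \ref{lip} are not actually needed for this equivalence. Only Assumption \ref{ass:2}, used through Lemma \ref{lem:1} and the linearity of the dynamics, plays a role; the other two matter later for monotonicity and for solving the resulting problem.
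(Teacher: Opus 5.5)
Your proposal is correct and follows essentially the same route as the paper's appendix proof: freeze $\pmb{\mu^*}$, use the disintegration (surjectivity) lemma to identify best responses with maximizers of the linear functional $L_{\pmb{\mu^*}}$ over $\mathcal K$, and use the flow constraints to equate consistency with $\pmb{\rho^*}\in\mathcal K$ and $\pmb{\mu}^{\pmb{\rho^*}}=\pmb{\mu^*}$. Two of your side points are accurate: the membership $\pmb{\rho^*}\in\mathcal K$ is genuinely needed in the converse (the paper also assumes it implicitly), and Assumptions \ref{weakmon} and \ref{lip} play no role in this equivalence.
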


\begin{proof}[Outline of the proof.]
For any policy flow \(\pmb{\pi}\) and state-measure flow \(\pmb{\mu}\), since the transition kernel is independent of the state-measure flow, we can write
\begin{equation}\label{J}
J(\pmb{\pi},\pmb{\mu})
=
\sum_{h=0}^{H-1}\sum_{s\in S}\sum_{a\in A}
r(s,a,\mu_h)\,\rho_h^{\pmb{\pi}}(s,a) =: L_{\pmb \mu}(\pmb {\rho^{\pi}}).
\end{equation}
For any fixed state-measure flow \(\pmb \mu\), we extend the functional $L_{\pmb \mu}$ to entire $\mathcal K$ as
\[
L_{\pmb \mu}(\pmb{\rho})
=
\sum_{h=0}^{H-1}\sum_{s\in S}\sum_{a\in A}
r\bigl(s,a,\mu_h\bigr)\, \rho_h(s,a).
\]
Using the functional \(L_{\pmb \mu}(\pmb{\rho})\), we show that
\((\pmb {\pi^*},\pmb {\mu^*})\) induces an MFE if, and only if,
\(
L_{\pmb {\mu^*}}(\pmb \rho)\le L_{\pmb {\mu^*}}(\pmb {\rho^*})\) for all $\pmb \rho \in \mathcal K$,
where \(\pmb{\rho^*}\) is the occupation flow corresponding to this MFE.
Moreover, using \eqref{J}, one can readily show that this criterion is
equivalent to \eqref{eq:vi-main}.
\end{proof}

It can be shown that \(\mathcal K\) is a nonempty, compact, and convex set. Equivalently, \eqref{eq:vi-main} can be written as the monotone inclusion \cite[Section 1.1]{sedlmayer2023fast}
\begin{equation}\label{eq:mi-main}
\mathbf{0} \in F(\pmb{\rho^\star}) + N_{\mathcal K}(\pmb{\rho^\star}),
\end{equation}
where \(N_{\mathcal K}\) denotes the normal-cone operator of \(\mathcal K\) (Remark \ref{rem:1}).

\begin{corollary}\label{cor:1}
With the notation of Theorem \ref{thrm:main}, \((\pmb{\pi^*}, \pmb{\mu^*})\) is a finite-horizon MFE if, and only if,
\[
\mathbf{0} \in F(\pmb{\rho^\star}) + N_{\mathcal K}(\pmb{\rho^\star}),
\]
where $\rho^*_h(s,a) = \pi^*_h(a|s)\mu_h^*(s)$.
\end{corollary}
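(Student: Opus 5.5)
The plan is to deduce the corollary directly from Theorem \ref{thrm:main}. What remains to prove is that, for $\pmb{\rho^*}\in\mathcal K$, the variational inequality \eqref{eq:vi-main} is equivalent to the inclusion $\mathbf 0\in F(\pmb{\rho^*})+N_{\mathcal K}(\pmb{\rho^*})$. Together with the theorem, this gives the chain: MFE $\iff$ \eqref{eq:vi-main} $\iff$ the inclusion.

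First, I would check that $\pmb{\rho^*}$ actually lies in $\mathcal K$. This matters because $N_{\mathcal K}$ is empty outside $\mathcal K$, and $F$ is only defined on $\mathcal K$. By construction, $\rho_h^*(s,a)=\pi_h^*(a\mid s)\mu_h^*(s)\ge 0$. Summing over $a$ gives $\sum_a\rho^*_h(s,a)=\mu^*_h(s)$, with $\mu^*_0=\mu_0$ by the standing convention. The flow constraint in the definition of $\mathcal K$ is then exactly the consistency condition \eqref{eq:last}, using Assumption \ref{ass:2} to drop the $\mu$-dependence of $P$.

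This argument covers the forward direction, where \eqref{eq:last} holds because the pair is an MFE. For the reverse direction, the inclusion itself forces $N_{\mathcal K}(\pmb{\rho^*})\neq\varnothing$, and hence $\pmb{\rho^*}\in\mathcal K$. In particular, the marginal of $\pmb{\rho^*}$ at time $h+1$ satisfies the flow constraint, and this marginal is $\mu^*_{h+1}$ because $\pi^*_h(\cdot\mid s)$ sums to one. So the consistency condition is recovered, which is what Theorem \ref{thrm:main} implicitly uses.

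With $\pmb{\rho^*}\in\mathcal K$ in hand, the equivalence is an unwinding of definitions. By Remark \ref{rem:1},
\[
\mathbf 0\in F(\pmb{\rho^*})+N_{\mathcal K}(\pmb{\rho^*})\iff -F(\pmb{\rho^*})\in N_{\mathcal K}(\pmb{\rho^*})\iff \langle -F(\pmb{\rho^*}),\pmb\rho-\pmb{\rho^*}\rangle\le 0\quad\forall\pmb\rho\in\mathcal K,
\]
and the last condition is exactly \eqref{eq:vi-main}. Lemma \ref{lem:1} ensures that $\mathcal K$ is nonempty, compact and convex, so the normal cone is the standard, well-defined object.

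The only delicate point is bookkeeping: making sure that membership $\pmb{\rho^*}\in\mathcal K$ is available in both directions, so that the theorem's hypotheses and the normal cone are meaningful. Everything else is a one-line restatement.
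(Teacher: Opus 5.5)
Your proposal is correct and follows the paper's route. Theorem \ref{thrm:main} gives MFE $\iff$ \eqref{eq:vi-main}, and the appendix lemma shows \eqref{eq:vi-main} $\iff -F(\pmb{\rho^\star})\in N_{\mathcal K}(\pmb{\rho^\star})\iff \mathbf 0\in F(\pmb{\rho^\star})+N_{\mathcal K}(\pmb{\rho^\star})$ directly from the definition of the normal cone. You also verify that $\pmb{\rho^\star}\in\mathcal K$ in both directions: via consistency in one, and via nonemptiness of $N_{\mathcal K}(\pmb{\rho^\star})$ in the other. The paper leaves this implicit, and your argument for it is sound.
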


\section{Mean-Field Anchored Proximal Gradient over Occupation Measures}
\label{sec:anchored_method}

In this section, we present an anchored first-order method for computing a finite-horizon MFE in the occupation-measure formulation. In general, for monotone inclusions of the form \(\mathbf{0} \in F(\mathbf{z}) + A(\mathbf{z})\), vanilla proximal descent methods are not guaranteed to converge. To address this issue, we employ an acceleration mechanism that ensures convergence.

For any given $\alpha>0$, since \(\mathcal K\) is a nonempty, compact, convex polytope, the normal-cone operator \(N_{\mathcal K}\) is maximal monotone, and its resolvent coincides with the Euclidean projection onto \(\mathcal K\); that is,
\[
J_{\alpha N_{\mathcal K}}=(I+\alpha N_{\mathcal K})^{-1}=\Pi_{\mathcal K}.
\]
Therefore, a proximal anchored-gradient step for \eqref{eq:mi-main} reduces to a projected anchored step on the occupation variable.

\paragraph{Anchored step.}
Starting from an initial feasible occupation flow \(\pmb{\rho^0}\in \mathcal K\), the iteration is given by
\begin{equation}\label{eq:anchored-update}
\pmb{\rho^{k+1}}
=
\Pi_{\mathcal K}
\Bigl(
(1-\beta_k)\pmb{\rho^k} + \beta_k \pmb{\rho^0} - \alpha_k F(\pmb{\rho^k})
\Bigr),
\qquad k=0,1,2,\dots.
\end{equation}
Here, \(\alpha_k > 0\) denotes the step size, and \(\beta_k \in [0,1]\) is the anchoring parameter.
A standard choice is
\begin{equation}\label{eq:stepsizes}
\alpha_k = \frac{1}{L\sqrt{k+\gamma}},
\qquad
\beta_k = \frac{\gamma}{k+\gamma},
\qquad \gamma \ge 2,
\end{equation}
where \(L := |S||A|\,L_r\) is the Lipschitz constant of the operator \(F\), as established in Lemma \ref{F-Lip}.

\begin{remark}
    The step-size choice in \eqref{eq:stepsizes} was first introduced in \cite{surina2026improved} for the unconstrained case, and was later extended to the constrained case in \cite{cai2026last}.
\end{remark}

The update \eqref{eq:anchored-update} admits a simple interpretation. The term
\(-\alpha_k F(\pmb{\rho^k})\) is a forward step driven by the current reward under the induced
mean-field flow, while the term \(\beta_k(\pmb{\rho^0}-\pmb{\rho^k})\) pulls the iterate back toward
the anchor \(\pmb{\rho^0}\), thereby stabilizing the last iterate. The projection \(\Pi_{\mathcal K}\) then
restores feasibility with respect to the occupation-flow constraints. We also note that implementing the update \eqref{eq:anchored-update} requires access to both the reward function and the transition kernel through suitable oracle queries.

A further remark on the projection step in \eqref{eq:anchored-update} is in order. Since \(\mathcal K\) is a compact convex polytope, computing the projection \(\Pi_{\mathcal K}(\pmb{x})\) is equivalent to solving the quadratic program
\begin{equation}\label{eq:proj}
\min_{\pmb{y} \in \mathcal K} \frac{1}{2}\|\pmb{y}-\pmb{x}\|^2_2,
\end{equation}
which can be solved in polynomial time and is therefore tractable. Indeed, first, note that for some fixed matrix $\pmb A$ and vector $\pmb b$, the set $\mathcal K$ can be rewritten as
\[
\mathcal K =\{ \pmb \rho \in \prod_{h=0}^{H-1}\mathbb{R}_+^{S\times A}: \pmb A \pmb \rho =\pmb b\}.
\]
In particular, \eqref{eq:proj} is a convex quadratic program, and when $\pmb A$ and $\pmb b$ are written in rationals, it can be solved exactly in polynomial time \cite{monteiro1989interior}:
\begin{theorem}\label{thrm:gg}
    Let $L$ be the input size of $\pmb x$. Then \eqref{eq:proj} can be solved exactly in $O(H\,|S|\,|A|\,L)$-arithmetic steps.
\end{theorem}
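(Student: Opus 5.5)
The plan is to put \eqref{eq:proj} into the standard form of a convex quadratic program and apply the primal--dual path-following method of \cite{monteiro1989interior}, with the dimension identified as $n := H|S||A|$. There is one caveat, stated up front: the plan proves the stated bound only if ``arithmetic steps'' is read as the \emph{steps} of the path-following scheme, i.e.\ its Newton iterations. Under the ordinary meaning of the phrase (scalar operations), this route does not give $O(H|S||A|L)$.

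\textbf{Reformulation.} Using the representation $\mathcal K=\{\pmb\rho\ge 0:\pmb A\pmb\rho=\pmb b\}$ given just before the statement, expanding the objective turns \eqref{eq:proj} into
\[
\min_{\pmb y}\ \tfrac12 \pmb y^\top I\,\pmb y-\pmb x^\top \pmb y \quad\text{s.t.}\quad \pmb A\pmb y=\pmb b,\ \pmb y\ge 0 .
\]
This is a convex QP in standard form. The Hessian $I$ is positive definite and $\pmb y\in\mathbb R^{n}$. The matrix $\pmb A$ has $m=H|S|$ rows:
\begin{itemize}
\item $|S|$ rows for the initial-marginal constraint;
\item $|S|$ rows for each of the $H-1$ flow-conservation constraints.
\end{itemize}
Its entries are $0$, $\pm1$ and the values $P(s'\mid s,a)$, and $\pmb b$ has entries $\mu_0(s)$ or $0$. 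So when $P$ and $\mu_0$ are rational, the input size is governed by $L$, the encoding length of the data together with $\pmb x$. Feasibility holds by Lemma \ref{lem:1}, and strict convexity gives a unique optimizer. These are exactly the hypotheses Monteiro--Adler require.

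\textbf{Applying Monteiro--Adler.} First, build an initial interior point on the central path. Their standard artificial-variable augmentation does this with only $O(1)$ extra variables and data of size $O(L)$, so the dimension stays $O(n)$. Next, each iteration is a single Newton step. It reduces the duality gap by a factor $(1-c/\sqrt n)$, so after $O(\sqrt n\,L)$ such steps the gap falls below $2^{-O(L)}$. Finally, their rounding/purification procedure recovers the exact rational optimum. The reason this works is that the optimum solves a linear system determined by its optimal active set, and rational data of size $L$ gives it bit length $O(L)$. Under the iteration count reading, the bound follows from $O(\sqrt n\,L)\subseteq O(nL)=O(H|S||A|L)$.

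\textbf{Main obstacle: cost of a single step.} If each step is charged for its scalar operations, I do not see a route to the stated bound. Writing $D_k$ for the diagonal matrix of the current iterate's complementarity ratios $y_j/s_j$ at iteration $k$, the Newton system reduces to the normal equations $\pmb A (I+D_k^{-1})^{-1}\pmb A^\top$. Because the constraints couple only consecutive times $h$ and $h+1$, this matrix is block tridiagonal with $H$ diagonal blocks of size $|S|\times|S|$. I would attempt to solve it by block Gaussian elimination along $h$ (a block Thomas algorithm), which costs $O(H|S|^3+H|S|^2|A|)$ per step. That is superlinear in $n$, and multiplying by the $O(\sqrt n\,L)$ iteration count still overshoots $O(nL)$. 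So this exploitation of structure sharpens the generic $O(n^3L)$ bound but does not recover the statement. The stated bound is therefore established as an iteration count, with the Newton step taken as the unit of work.
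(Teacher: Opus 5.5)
Your route is the paper's route. Its entire proof is the one-line appeal to \cite{monteiro1989interior}, which you unpack correctly: a standard-form convex QP with $n=H|S||A|$ variables and $H|S|$ full-rank equality rows, artificial initialization, $O(\sqrt n\,L)$ path-following iterations, and a final rounding to the exact rational optimum.

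Your caveat is also correct, and it points to a defect in the statement rather than in your argument. Monteiro--Adler's convex-QP result gives $O(\sqrt n\,L)$ iterations and $O(n^3L)$ total arithmetic operations, not $O(nL)$ operations. The paper itself relies on the latter: Corollary \ref{cor:a} charges the projection $(H|S||A|)^3L_k$ operations, which is exactly the Monteiro--Adler count. So the $O(H|S||A|L)$ in the statement is a misstatement. In the literal operation-count sense the cited reference does not support it, and the paper's one-line proof establishes it no more than your argument does. Rather than re-reading ``arithmetic steps'' as Newton iterations, the clean fix is to state what is actually proved: $O(\sqrt{H|S||A|}\,L)$ iterations and $O((H|S||A|)^3L)$ arithmetic operations.

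Two details in your write-up are not cosmetic, and the paper's citation glosses over both.
\begin{itemize}
\item $L$ must be the encoding length of $P$, $\mu_0$ and $\pmb x$ together, not of $\pmb x$ alone, since $\pmb A$ and $\pmb b$ enter Monteiro--Adler's bit-length parameter.
\item The artificial augmentation is genuinely needed. Whenever $\mu_0(s)=0$ for some $s$ (e.g.\ $\mu_0=\delta_0$ in Proposition \ref{prop:a}), every element of $\mathcal K$ has zero coordinates. The strict-feasibility assumption of the path-following method then fails for \eqref{eq:proj} as written. Your augmentation makes the citation applicable; so would deleting the coordinates forced to zero, after which the occupation flow of the uniform policy is strictly positive.
\end{itemize}
Your block-tridiagonal digression can be dropped. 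For dense $P$, forming the diagonal blocks $\sum_{s,a}P(s'\mid s,a)\,w(s,a)\,P(s''\mid s,a)$ already costs $O(|S|^3|A|)$ per stage, so your per-step count is optimistic. This only strengthens your conclusion that this route cannot reach $O(nL)$ operations.
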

\begin{proof}
    This result directly follows from \cite{monteiro1989interior}.
\end{proof}

\begin{corollary}\label{cor:a}
    For each $k$, let $L_k$ be the input size of each update $(1-\beta_k)\pmb{\rho^k} + \beta_k \pmb{\rho^0} - \alpha_k F(\pmb{\rho^k})$. Then, each step \eqref{eq:anchored-update} can be calculated in at most $O(H\,|S|\,|A|\,C_r + (H\,|S|\,|A|)^3\,L_k)$-arithmetic steps, where $C_r$ is the computational cost of encoding $r(\cdot,\cdot,\cdot)$.
\end{corollary}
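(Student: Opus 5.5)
The proof splits one iteration of \eqref{eq:anchored-update} into two phases and bounds each separately. Phase one forms the unprojected point $\pmb x^k := (1-\beta_k)\pmb{\rho^k} + \beta_k \pmb{\rho^0} - \alpha_k F(\pmb{\rho^k})$. Phase two projects $\pmb x^k$ onto $\mathcal K$ by solving the quadratic program \eqref{eq:proj}. The total cost is the sum of the two phase costs.

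\textbf{Phase one.} We evaluate $F(\pmb{\rho^k})$ as follows.
\begin{enumerate}
\item For each $h$ and $s$, compute the induced state flow $\mu_h^{\pmb{\rho^k}}(s)=\sum_a \rho^k_h(s,a)$. This costs $O(H|S||A|)$ additions.
\item For each of the $H|S||A|$ coordinates, query $r(s,a,\mu_h^{\pmb{\rho^k}})$. Each query costs $C_r$ by definition of $C_r$, giving $O(H|S||A|C_r)$ in total.
\item Form the convex combination and the gradient shift coordinatewise. This adds $O(H|S||A|)$ arithmetic operations, which is absorbed into the previous term.
\end{enumerate}
The parameters $\alpha_k$ and $\beta_k$ in \eqref{eq:stepsizes} are scalars that are computed once per iteration at $O(1)$ cost.

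\textbf{Phase two.} We use the representation $\mathcal K=\{\pmb\rho\ge 0:\pmb A\pmb\rho=\pmb b\}$, where $\pmb A$ has $n:=H|S||A|$ columns and $H|S|$ rows. Problem \eqref{eq:proj} is then a convex quadratic program in $n$ variables with identity Hessian. We invoke the primal--dual path-following method of \cite{monteiro1989interior}, as in Theorem \ref{thrm:gg}.

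Its complexity should be read as $O(\sqrt n\,L_k)$ iterations, where $L_k$ is the input size of the data $\pmb x^k$. Equivalently, since $\pmb A,\pmb b$ are fixed, $L_k$ can be taken as the total encoding length. Each iteration solves a Newton linear system of dimension $O(n)$. Done naively, that costs $O(n^3)$ arithmetic operations per iteration. With the rank-one-update amortization of Monteiro--Adler, the amortized cost per iteration drops to $O(n^{2.5})$.

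Either way, the total is bounded by $O(n^3 L_k)=O((H|S||A|)^3L_k)$, which is the claimed second term. Theorem \ref{thrm:gg} as stated counts only iterations. The per-iteration linear-algebra factor must be supplied explicitly from \cite{monteiro1989interior}.

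\textbf{Main obstacle.} The main difficulty is bookkeeping on $L_k$. The phase-one arithmetic produces rational numbers whose bit size can grow with $k$, through the anchoring weights, $\alpha_k$, and the reward values. The statement sidesteps this by parametrizing the cost with the input size $L_k$ of $\pmb x^k$ itself, so the claim remains a per-step bound.

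The one point needing care is exact recovery. The interior-point method returns an approximate solution after $O(\sqrt n L_k)$ iterations. It is then rounded to the exact optimal face by the standard purification step of \cite{monteiro1989interior}. That purification costs $O(n^3)$ arithmetic operations, so it is also absorbed into the $O(n^3L_k)$ term.

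Summing the two phases gives $O(H|S||A|C_r+(H|S||A|)^3L_k)$.
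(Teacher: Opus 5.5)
Your argument is correct and follows the paper's proof: the paper likewise charges $O(H|S||A|C_r)$ for the $H|S||A|$ reward evaluations needed to form $F(\pmb{\rho^k})$ and the update, then adds the Monteiro--Adler cost of the projection QP, while you additionally spell out the per-iteration linear algebra and the final rounding step. One small correction: the naive $O(n^3)$ per-iteration cost over $O(\sqrt{n}\,L_k)$ iterations gives $O(n^{3.5}L_k)$, not $O(n^3L_k)$, so the claimed term needs the amortized rank-one-update version of the method and does not hold ``either way''.
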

\begin{proof}
    Note that evaluating $F(\pmb{\rho^k})$ requires $H\,|S|\,|A|$ scalar reward evaluations. Thus, as $C_r$ is the arithmetic cost of one scalar reward evaluation, $(1-\beta_k)\pmb{\rho^k} + \beta_k \pmb{\rho^0} - \alpha_k F(\pmb{\rho^k})$ can be evaluated in $O(H\,|S|\,|A|\,C_r)$ steps. Combined with the cost of projection in Theorem \ref{thrm:gg}, we obtain the desired $O(H\,|S|\,|A|\,C_r + (H\,|S|\,|A|)^3\,L_k)$ arithmetic complexity.
\end{proof}

In particular, in contrast to previous work, most notably \cite{isobe2024last}, all intermediate steps of our algorithm are fully tractable. A more closely related line of work is \cite{guo2024mf}, which also uses occupation measures and proposes an iterative algorithm. However, that approach relies on the ability to choose a sufficiently good initialization in order to guarantee merely an asymptotic convergence, whereas our method does not require such an assumption.

\paragraph{Recovery of the policy and state-measure flows.}
Given an occupation-flow iterate \(\pmb{\rho^k}\), recall that the induced state flow is defined by
\[
\mu_h^k(s):=\sum_{a\in A}\rho_h^k(s,a),
\qquad h=0,\dots,H-1.
\]
A policy flow \(\pmb{\pi^k}\) can then be recovered from \(\pmb{\rho^k}\) by disintegration:
\begin{equation}\label{eq:policy-recovery}
\pi_h^k(a\mid s)
=
\begin{cases}
\dfrac{\rho_h^k(s,a)}{\mu_h^k(s)}, & \text{if } \mu_h^k(s)>0, \\[2mm]
\text{any element of }\mathcal P(A), & \text{if } \mu_h^k(s)=0.
\end{cases}
\end{equation}
Thus, each iterate \(\pmb{\rho^k}\) naturally defines an approximate equilibrium candidate \((\pmb{\pi^k},\pmb{\mu^k})\). We note, however, that the recovered policy flow need not be unique, since on states with zero mass one may choose any element of \(\mathcal P(A)\).

As noted in \cite{cai2026last}, a natural notion of error metric for the iteration \eqref{eq:anchored-update} is given by the tangential residual
\begin{equation}\label{residual}
r^{\mathrm{tan}}_{F,N_{\mathcal K}}(\pmb{\rho^T})
:=
\inf_{\pmb{c}\in N_{\mathcal K}(\pmb{\rho^T})}
\|F(\pmb{\rho^T})+\pmb{c}\|_2.
\end{equation}
By Corollary \ref{cor:1}, an occupation flow \(\pmb{\rho^*}\) induces an MFE if,
and only if,
\(
\pmb 0 \in F(\pmb{\rho^*})+N_{\mathcal K}(\pmb{\rho^*}).
\)
Equivalently, there exists \(\pmb {c^*} \in N_{\mathcal K}(\pmb{\rho^*})\) such
that
\(
\pmb 0=F(\pmb{\rho^*})+\pmb {c^*}.
\)
Thus, the residual in \eqref{residual} measures the smallest remaining error
after correcting \(F(\pmb{\rho^T})\) by a normal-cone direction to
\(\mathcal K\) at \(\pmb{\rho^T}\). In particular, the residual vanishes exactly
when \(\pmb{\rho^T}\) satisfies the MFE condition.

\begin{theorem}\label{thrm:cc}
Suppose that $G:\mathbb R^d\to\mathbb R^d$ is monotone and $L$-Lipschitz continuous with respect to $\|\cdot\|_2$, and that $A:\mathbb R^d\rightrightarrows\mathbb R^d$ is maximally monotone. Further, suppose that there exists $\pmb z^\star\in\mathbb R^d$ such that $\pmb 0\in G(\pmb z^\star)+A(\pmb z^\star).$
Then, the iterates generated by
$\pmb z_{t+1}=J_{\alpha_t A}\left((1-\beta_t)\pmb z_t+\beta_t\pmb z_0-\alpha_t G(\pmb z_t)\right)$
satisfy
$$r^{\mathrm{tan}}_{G,A}(\pmb z_T)\le\frac{25\gamma L\|\pmb z^\star-\pmb z_0\|_2}{\sqrt{T-1+\gamma}}.$$
\end{theorem}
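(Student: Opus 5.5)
The plan is to run a Halpern-type (anchored) Lyapunov argument directly on the forward--backward iteration, with the step sizes \eqref{eq:stepsizes}, $\alpha_t = 1/(L\sqrt{t+\gamma})$ and $\beta_t=\gamma/(t+\gamma)$. This is essentially the argument behind \cite[Theorem 2]{cai2026last}, which the statement reproduces, so in the paper one can simply cite it. Below is how I would reconstruct it.

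\textbf{Residual witness.} By definition of the resolvent $J_{\alpha_t A}$, the vector
\[
\pmb c_{t+1}:=\alpha_t^{-1}\bigl((1-\beta_t)\pmb z_t+\beta_t\pmb z_0-\alpha_t G(\pmb z_t)-\pmb z_{t+1}\bigr)
\]
lies in $A(\pmb z_{t+1})$. Set $\pmb g_{t+1}:=G(\pmb z_{t+1})+\pmb c_{t+1}$. Then $r^{\tan}_{G,A}(\pmb z_{t+1})\le\|\pmb g_{t+1}\|_2$, so it suffices to bound $\|\pmb g_T\|_2$. The update rewrites as
\[
\pmb z_{t+1}-\pmb z_t=\beta_t(\pmb z_0-\pmb z_t)-\alpha_t\pmb g_{t+1}+\alpha_t\pmb e_t,\qquad \pmb e_t:=G(\pmb z_{t+1})-G(\pmb z_t),
\]
with $\|\pmb e_t\|\le L\|\pmb z_{t+1}-\pmb z_t\|$. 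This is an implicit anchored (Halpern) step on the maximal monotone operator $G+A$, perturbed by the error $\alpha_t\pmb e_t$.

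\textbf{Two monotonicity inequalities.} Since $G+A$ is monotone and $\pmb g_{t+1}\in(G+A)(\pmb z_{t+1})$, $-G(\pmb z^\star)\in A(\pmb z^\star)$, I will use:
\begin{enumerate}
\item[(i)] $\langle \pmb g_{t+1}-\pmb g_t,\pmb z_{t+1}-\pmb z_t\rangle\ge0$ between consecutive iterates;
\item[(ii)] $\langle\pmb g_{t+1},\pmb z_{t+1}-\pmb z^\star\rangle\ge0$ against the solution.
\end{enumerate}

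\textbf{Boundedness.} From (ii) and the rewritten update, a standard Halpern computation gives $\|\pmb z_t-\pmb z^\star\|\le C\|\pmb z_0-\pmb z^\star\|$ for a small absolute constant $C$. Here the perturbation is absorbed using $\alpha_t L\le 1/\sqrt{\gamma}\le 1/\sqrt2$.

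\textbf{Potential.} I would then take
\[
V_t= a_t\|\pmb g_t\|^2+b_t\langle\pmb g_t,\pmb z_t-\pmb z_0\rangle,
\]
with $b_t\propto(t+\gamma)$ and $a_t\propto \alpha_t(t+\gamma)^2$, which mirrors the anchored EAG/Halpern potentials. Substituting $\pmb z_{t+1}-\pmb z_t$ from the rewritten update into (i) shows $V_{t+1}\le V_t+\varepsilon_t$. The error $\varepsilon_t$ collects the terms $\alpha_t\langle \pmb g_{t+1}-\pmb g_t,\pmb e_t\rangle$ and the mismatch between $a_{t+1}$ and $a_t$.

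\textbf{Telescoping.} Telescoping, bounding $\langle\pmb g_T,\pmb z_T-\pmb z_0\rangle$ by Cauchy--Schwarz and the boundedness step, and solving the resulting quadratic inequality in $\|\pmb g_T\|$ should give
\[
\|\pmb g_T\|\lesssim \frac{\gamma L\|\pmb z^\star-\pmb z_0\|}{\sqrt{T-1+\gamma}}.
\]
Tracking constants should give the factor $25$.

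\textbf{Main obstacle.} The hard step is controlling $\varepsilon_t$. Because there is no extragradient correction, the forward evaluation at $\pmb z_t$ instead of $\pmb z_{t+1}$ produces a term of size $\alpha_t L\|\pmb z_{t+1}-\pmb z_t\|\,\|\pmb g_{t+1}-\pmb g_t\|$. I would first bound $\|\pmb z_{t+1}-\pmb z_t\|\le \beta_t\|\pmb z_0-\pmb z_t\|+\alpha_t\|\pmb g_{t+1}\|+\alpha_t L\|\pmb z_{t+1}-\pmb z_t\|$ and absorb the last term using $\alpha_t L<1$. Then Young's inequality should split the error into a piece absorbed by the $a_t\|\pmb g\|^2$ terms and a remainder of order $\alpha_t^2L^2(t+\gamma)\,\|\pmb z_0-\pmb z^\star\|^2\approx\|\pmb z_0-\pmb z^\star\|^2$ per step. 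Summed over $t$, this remainder grows linearly in $T$. That growth is exactly what degrades the classical $O(1/T)$ anchored rate to $O(1/\sqrt T)$, and it is why $\alpha_t$ must decay like $1/\sqrt{t}$. Tuning the constants so that all of these pieces close with $\gamma\ge2$ is where I expect most of the work.
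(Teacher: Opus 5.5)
The paper's proof of this statement is only the citation of \cite[Theorem 2]{cai2026last}, which you also note, so at that level you agree with it. Your self-contained reconstruction, however, fails at exactly the step that is supposed to produce the rate.

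Put $D:=\|\pmb z^\star-\pmb z_0\|_2$. Telescoping, and using (ii) as $\langle\pmb g_T,\pmb z_T-\pmb z_0\rangle\ge\langle\pmb g_T,\pmb z^\star-\pmb z_0\rangle\ge-D\|\pmb g_T\|$, gives $a_T\|\pmb g_T\|^2\le b_TD\|\pmb g_T\|+V_1+\sum_{t<T}\varepsilon_t$. Hence $\|\pmb g_T\|\le b_TD/a_T+\bigl((V_1+\sum_{t<T}\varepsilon_t)/a_T\bigr)^{1/2}$.

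With $a_T\asymp T^{3/2}/L$ and $b_T\asymp T$, the first term is already of order $LD/\sqrt T$. So the $1/\sqrt T$ rate comes from $b_T/a_T\asymp 1/(T\alpha_T)$, not from error accumulation. Equivalently, it comes from the size of the effective Tikhonov weight $\beta_t/\alpha_t=\gamma L/\sqrt{t+\gamma}$, which is visible directly in the identity $\pmb g_{t+1}=G(\pmb z_{t+1})-G(\pmb z_t)+\alpha_t^{-1}(\pmb z_t-\pmb z_{t+1})-\frac{\beta_t}{\alpha_t}(\pmb z_t-\pmb z_0)$.

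You claim the remainder is of constant order per step (note $\alpha_t^2L^2(t+\gamma)\equiv 1$), so that $\sum_{t<T}\varepsilon_t$ grows like $LD^2T$. In that case the second term is of order $LD\,T^{-1/4}$, and your argument as described proves only an $O(T^{-1/4})$ bound. What is actually needed is $\sum_{t<T}\varepsilon_t=O(LD^2\sqrt T)$, i.e.\ a per-step remainder $O(LD^2/\sqrt t)$ in your normalization.

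This is attainable, but with different bookkeeping. Set $\pmb u_t:=\pmb g_{t+1}-(1-\beta_t)\pmb g_t$. Multiplying (i) by $(1-\beta_t)/\beta_t$ and substituting the update gives
\[
\langle \pmb g_{t+1},\pmb z_{t+1}-\pmb z_0\rangle+\frac{\alpha_t}{2\beta_t}\bigl(\|\pmb g_{t+1}\|^2+\|\pmb u_t\|^2\bigr)\le(1-\beta_t)\Bigl[\langle \pmb g_t,\pmb z_t-\pmb z_0\rangle+\frac{\alpha_t(1-\beta_t)}{2\beta_t}\|\pmb g_t\|^2\Bigr]+\frac{\alpha_t}{\beta_t}\langle \pmb u_t,\pmb e_t\rangle .
\]
Young's inequality against the $\|\pmb u_t\|^2$ term leaves only $\frac{\alpha_tL^2}{2\beta_t}\|\pmb z_{t+1}-\pmb z_t\|^2$. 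The two parts of $\|\pmb z_{t+1}-\pmb z_t\|$ then behave quite differently.
\begin{enumerate}
\item The $\beta_t\|\pmb z_t-\pmb z_0\|$ part costs $O(\alpha_t\beta_tL^2D^2)=O(\gamma LD^2t^{-3/2})$ per step in this unweighted form. This is what makes the weighted sum sublinear.
\item The $\alpha_t\|\pmb g_{t+1}\|$ part costs about $\frac{\alpha_t^3L^2}{2\beta_t}\|\pmb g_{t+1}\|^2\approx\frac{1}{2\gamma L\sqrt t}\|\pmb g_{t+1}\|^2$. It must be absorbed by the gap $\frac{1}{2\gamma}\bigl(\alpha_t(t+\gamma)-\alpha_{t+1}(t+1)\bigr)\approx\frac{\gamma-1/2}{2\gamma L\sqrt t}$ between consecutive quadratic coefficients.
\end{enumerate}
That absorption, not linear error growth, is what limits $\alpha_t$ to order $1/(L\sqrt t)$, and it is where $\gamma\ge 2$ is used (for large $t$ roughly $\gamma>3/2$ suffices). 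With the crude factor $(1-\alpha_tL)^{-1}$ it does not close for small $t$ when $\gamma=2$, so the first iterates need a separate crude bound.

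Finally, the exact telescoping weight is $\prod_s(1-\beta_s)^{-1}\asymp t^{\gamma}$. With your $b_t\propto t+\gamma$, the step $V_{t+1}\le V_t+\varepsilon_t$ also requires charging the possibly negative bracket on the right into $\varepsilon_t$. This is harmless but must be accounted for.
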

\begin{proof}
    This result is the same as \cite[Theorem 2]{cai2026last}.
\end{proof}

\begin{theorem}\label{thrm:conv}
    Suppose that Assumptions \ref{weakmon}--\ref{lip} hold. Then, the iterates generated by \eqref{eq:anchored-update} with step sizes given by \eqref{eq:stepsizes} satisfy
    \[
    r^{\mathrm{tan}}_{F,N_{\mathcal K}}(\pmb{\rho^T})
    \le \frac{25\,\gamma \,\sqrt{2H} \,|S| \,|A|\, L_r}{\sqrt{T-1+\gamma}}.
    \]
\end{theorem}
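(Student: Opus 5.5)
The plan is to apply Theorem \ref{thrm:cc} with $G=F$, $A=N_{\mathcal K}$ and $d=H|S||A|$. Then bound $\|\pmb{\rho^\star}-\pmb{\rho^0}\|_2$ uniformly over $\mathcal K$ by $\sqrt{2H}$.

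\textbf{Verifying the hypotheses.} By Remark \ref{rem:1} and Lemma \ref{lem:1}, $N_{\mathcal K}$ is maximal monotone, and its resolvent is $\Pi_{\mathcal K}$ for every $\alpha_k>0$. Hence \eqref{eq:anchored-update} is exactly the iteration of Theorem \ref{thrm:cc}. By Lemmas \ref{F-Lip} and \ref{F-mon}, $F$ is $L$-Lipschitz and monotone with $L=|S||A|L_r$.

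There is one technicality: $F$ is only defined on $\mathcal K$, while Theorem \ref{thrm:cc} asks for $G:\mathbb R^d\to\mathbb R^d$. I would handle this in one of two ways.
\begin{itemize}
\item Extend $F$ by the same formula, $[F(\pmb\rho)]_{h,s,a}=-r(s,a,\cdot)$ evaluated at the projection of the induced marginal onto $\mathcal P(S)$. This preserves the Lipschitz bound.
\item Alternatively, note that the proof in \cite{cai2026last} only evaluates $G$ at iterates, which lie in $\mathcal K$ after projection; $\pmb{\rho^0}\in\mathcal K$ as well. Monotonicity is only used between points of $\operatorname{dom}A=\mathcal K$.
\end{itemize}
I expect the second route to be the cleaner remark. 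This domain issue is the only real obstacle.

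\textbf{Existence of a solution.} An MFE exists under Assumption \ref{weakmon} by \cite[Proposition 1]{perolat2021scaling}. By Corollary \ref{cor:1}, its occupation flow $\pmb{\rho^\star}\in\mathcal K$ satisfies $\mathbf 0\in F(\pmb{\rho^\star})+N_{\mathcal K}(\pmb{\rho^\star})$.

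\textbf{Diameter bound.} For any $\pmb\rho,\pmb{\tilde\rho}\in\mathcal K$, each $\rho_h$ and $\tilde\rho_h$ is a probability vector on $S\times A$. Therefore
\[
\|\rho_h-\tilde\rho_h\|_2^2\le\|\rho_h-\tilde\rho_h\|_1\,\|\rho_h-\tilde\rho_h\|_\infty\le 2\cdot 1=2 .
\]
Summing over $h=0,\dots,H-1$ gives $\|\pmb{\rho^\star}-\pmb{\rho^0}\|_2\le\sqrt{2H}$.

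\textbf{Conclusion.} Substituting $L=|S||A|L_r$ and the diameter bound into Theorem \ref{thrm:cc} gives
\[
r^{\mathrm{tan}}_{F,N_{\mathcal K}}(\pmb{\rho^T})\le \frac{25\gamma\,|S||A|L_r\sqrt{2H}}{\sqrt{T-1+\gamma}},
\]
which is the claimed bound.
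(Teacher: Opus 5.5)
Your proposal is correct and matches the paper's proof: apply Theorem \ref{thrm:cc} with $G=F$, $A=N_{\mathcal K}$ and $L=|S||A|L_r$, get a solution from \cite{perolat2021scaling} together with Corollary \ref{cor:1}, handle the domain issue by noting that $\pmb{\rho^0}$, the iterates and $\pmb{\rho^\star}$ all lie in $\mathcal K$, and bound $\|\pmb{\rho^\star}-\pmb{\rho^0}\|_2\le\sqrt{2H}$. One caution: your first route for the domain issue preserves only the Lipschitz constant, not monotonicity off $\mathcal K$ (Assumption \ref{weakmon} says nothing about infeasible flows), so the argument actually rests on your second route, which is also the one the paper uses.
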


\begin{proof}
    By Lemma \ref{F-mon}, $F$ is monotone on $\mathcal K$ and by Lemma \ref{F-Lip} $F$ is $|S||A|L_r$-Lipschitz on $\mathcal K$. Note that the iterations \eqref{eq:anchored-update} remain on the set $\mathcal K$. Thus, although $F$ is not monotone on the whole $\mathbb R^d$, Theorem \ref{thrm:cc} is still applicable to $F$. By Remark \ref{rem:1} and Lemma \ref{lem:1}, we have that $N_{\mathcal K}$ is maximally monotone. Lastly, by \cite[Proposition 1]{perolat2021scaling} and Assumption \ref{weakmon}, there exists $\pmb 0 \in F(\pmb \rho^*) + N_{\mathcal K}(\pmb {\rho^*})$ for some $\pmb {\rho^*} \in \mathcal K$. Thus, since $\|\pmb {z_t} -\pmb {z_0}\|_2 \le \sqrt{2H}$, the desired rate of convergence follows directly from Theorem \ref{thrm:cc}.
\end{proof}

It remains to verify that every accumulation point of the sequence
\((\pmb{\rho^T})_T\) induces an MFE. Since this sequence is bounded, all of its accumulation points can be reached via some subsequence.

\begin{lemma}\label{lem:cc}
    For any accumulation point \(\pmb{\rho^*}\) of \((\pmb{\rho^T})_T\) we have \(
r^{\mathrm{tan}}_{F,N_{\mathcal K}}(\pmb{\rho^*})=0.
\)
\end{lemma}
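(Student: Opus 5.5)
Fix an accumulation point \(\pmb{\rho^*}\) and a subsequence \(\pmb{\rho^{T_j}}\to\pmb{\rho^*}\). Since \(\mathcal K\) is closed (Lemma \ref{lem:1}) and every iterate lies in \(\mathcal K\), we have \(\pmb{\rho^*}\in\mathcal K\). By Theorem \ref{thrm:conv}, \(r^{\mathrm{tan}}_{F,N_{\mathcal K}}(\pmb{\rho^{T_j}})\to 0\).

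For each \(j\), the infimum in \eqref{residual} is attained: \(N_{\mathcal K}(\pmb{\rho^{T_j}})\) is a nonempty closed convex cone, and we minimize a continuous coercive function over it. So there is \(\pmb{c^j}\in N_{\mathcal K}(\pmb{\rho^{T_j}})\) with
\[
\|F(\pmb{\rho^{T_j}})+\pmb{c^j}\|_2=r^{\mathrm{tan}}_{F,N_{\mathcal K}}(\pmb{\rho^{T_j}}).
\]

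Next we show the \(\pmb{c^j}\) are bounded. By Lemma \ref{F-Lip}, \(F\) is continuous on the compact set \(\mathcal K\), so \(\|F(\pmb{\rho^{T_j}})\|_2\) is bounded. Hence
\[
\|\pmb{c^j}\|_2\le \|F(\pmb{\rho^{T_j}})\|_2+r^{\mathrm{tan}}_{F,N_{\mathcal K}}(\pmb{\rho^{T_j}})
\]
is bounded as well. Passing to a further subsequence, \(\pmb{c^j}\to\pmb{c^*}\), and then \(F(\pmb{\rho^{T_j}})+\pmb{c^j}\to F(\pmb{\rho^*})+\pmb{c^*}\). This limit has norm \(0\), so \(\pmb{c^*}=-F(\pmb{\rho^*})\).

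The key step, and the main point requiring care, is that \(\pmb{c^*}\in N_{\mathcal K}(\pmb{\rho^*})\). This follows from closedness of the graph of the normal cone. For every \(\pmb y\in\mathcal K\) we have \(\langle \pmb{c^j},\pmb y-\pmb{\rho^{T_j}}\rangle\le 0\) by the definition of \(N_{\mathcal K}\) in Remark \ref{rem:1}. Passing to the limit by continuity of the inner product gives \(\langle \pmb{c^*},\pmb y-\pmb{\rho^*}\rangle\le 0\) for all \(\pmb y\in\mathcal K\). Since \(\pmb{\rho^*}\in\mathcal K\), this means \(\pmb{c^*}\in N_{\mathcal K}(\pmb{\rho^*})\). (Alternatively, one could cite that maximal monotone operators have closed graphs.)

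Therefore \(\pmb 0=F(\pmb{\rho^*})+\pmb{c^*}\in F(\pmb{\rho^*})+N_{\mathcal K}(\pmb{\rho^*})\). Taking \(\pmb c=\pmb{c^*}\) in \eqref{residual} yields \(r^{\mathrm{tan}}_{F,N_{\mathcal K}}(\pmb{\rho^*})=0\). By Corollary \ref{cor:1}, \(\pmb{\rho^*}\) then induces an MFE.

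The only potential obstacle is that the tangential residual need not be continuous in general. The argument above sidesteps this by combining boundedness of the minimizing normal vectors with graph-closedness of \(N_{\mathcal K}\).
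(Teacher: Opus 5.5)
Your proof is correct and follows essentially the same route as the paper's. Both arguments extract a convergent subsequence and take normal vectors $\pmb{c^j}\in N_{\mathcal K}(\pmb{\rho^{T_j}})$ that (nearly) attain the residual. Boundedness of $F$ on the compact set $\mathcal K$ then bounds the $\pmb{c^j}$, a limit $\pmb{c^*}=-F(\pmb{\rho^*})$ is extracted, and closedness of the graph of $N_{\mathcal K}$ gives $\mathbf 0\in F(\pmb{\rho^*})+N_{\mathcal K}(\pmb{\rho^*})$. The differences are cosmetic: you use exact minimizers where the paper uses $1/n$-approximate ones, and you verify graph-closedness directly from the defining inequalities rather than citing Rockafellar.
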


Let \(\pmb{\rho^*}\) be an accumulation point of \((\pmb{\rho^T})_T\). Thus, there exists a subsequence \((T_n)_n\) such that
\(
\lim_{n\to\infty} \pmb{\rho^{T_n}}=\pmb{\rho^*}.
\)
By Lemma \ref{lem:cc}, we have
\(
r^{\mathrm{tan}}_{F,N_{\mathcal K}}(\pmb{\rho^*})=0.
\)
Hence, by \eqref{residual}, there exists some
\(\pmb{c^*} \in N_{\mathcal K}(\pmb{\rho}^*)\) such that
\(
\mathbf{0}=F(\pmb{\rho^*})+\pmb{c^*}.
\)
Equivalently,
\(
\mathbf{0}\in F(\pmb{\rho^*})+N_{\mathcal K}(\pmb{\rho^*}).
\)
Therefore, by Corollary \ref{cor:1}, \(\pmb{\rho^*}\) induces a
finite-horizon MFE. Since \(\pmb{\rho^*}\) was chosen as an arbitrary
accumulation point of \((\pmb{\rho^T})_T\), every accumulation point of this
sequence induces an MFE.

The next result identifies the unique limit of $\pmb{\rho^T}$.

\begin{theorem}
Let $\mathcal S\subset \mathcal K$ denote the set of occupancy measures that induce an MFE. If Assumptions \ref{weakmon}--\ref{lip} hold, then $\mathcal S$ is a compact and convex set. Let $\Pi_{\mathcal S}(\pmb{\rho^0})$ denote the orthogonal projection of $\pmb{\rho^0}$ onto $\mathcal S$. Then,
$\lim_{T\to\infty}\pmb{\rho^T}=\Pi_{\mathcal S}(\pmb{\rho^0})$
with respect to the Euclidean norm. In particular, when one has complete access to $r$ and $P$, the iterations in \eqref{eq:anchored-update} are tractable and converge to an occupancy measure that induces an MFE.
\end{theorem}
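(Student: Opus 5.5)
The plan has two parts: first show that $\mathcal S$ is compact and convex, then run a Halpern/Browder-type argument showing that the anchor selects the projection $\Pi_{\mathcal S}(\pmb{\rho^0})$ among the possibly many equilibria.

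\emph{Structure of $\mathcal S$.} By Corollary \ref{cor:1} and Theorem \ref{thrm:main}, $\mathcal S=\{\pmb\rho\in\mathcal K:\langle F(\pmb\rho),\pmb{\tilde\rho}-\pmb\rho\rangle\ge 0\ \forall \pmb{\tilde\rho}\in\mathcal K\}$. Since $F$ is continuous (Lemma \ref{F-Lip}) and monotone on the convex set $\mathcal K$ (Lemma \ref{F-mon}), I will prove Minty's lemma:
\[
\mathcal S=\{\pmb\rho\in\mathcal K:\langle F(\pmb{\tilde\rho}),\pmb{\tilde\rho}-\pmb\rho\rangle\ge 0\ \forall\pmb{\tilde\rho}\in\mathcal K\}.
\]
\begin{itemize}
\item The inclusion "$\subseteq$" follows directly from monotonicity.
\item For "$\supseteq$", take $\pmb{\tilde\rho}=\pmb\rho+t(\pmb\sigma-\pmb\rho)$ with $\pmb\sigma\in\mathcal K$, divide by $t>0$, and let $t\downarrow 0$ using continuity of $F$.
\end{itemize}
The right-hand side is $\mathcal K$ intersected with a family of closed half-spaces, so it is closed and convex. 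It is bounded because $\mathcal K$ is compact (Lemma \ref{lem:1}). It is nonempty by \cite[Proposition 1]{perolat2021scaling}. Hence $\pmb{\rho^\star}:=\Pi_{\mathcal S}(\pmb{\rho^0})$ is well defined. It is characterized by
\[
\langle \pmb{\rho^0}-\pmb{\rho^\star},\pmb\sigma-\pmb{\rho^\star}\rangle\le 0\qquad \forall\pmb\sigma\in\mathcal S.
\]

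\emph{A limsup inequality.} By Lemma \ref{lem:cc} and the discussion after it, every accumulation point of $(\pmb{\rho^T})$ lies in $\mathcal S$. The sequence is bounded, so I can choose a subsequence attaining the limsup and converging to some $\bar{\pmb\rho}\in\mathcal S$. The projection characterization then gives
\[
\limsup_{T\to\infty}\langle\pmb{\rho^0}-\pmb{\rho^\star},\pmb{\rho^{T}}-\pmb{\rho^\star}\rangle\le 0 .
\]
Note that $\pmb{\rho^\star}$ is a fixed point of the step, since $\pmb{\rho^\star}=\Pi_{\mathcal K}(\pmb{\rho^\star}-\alpha F(\pmb{\rho^\star}))$.

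\emph{Perturbed recursion.} Set $a_k=\|\pmb{\rho^k}-\pmb{\rho^\star}\|_2^2$. I want to derive
\[
a_{k+1}\le(1-\beta_k)a_k+\beta_k\delta_k,\qquad \limsup_k\delta_k\le 0,
\]
and then apply Xu's lemma, which uses $\sum_k\beta_k=\infty$ (true since $\beta_k=\gamma/(k+\gamma)$), to conclude $a_k\to 0$. To get the recursion:
\begin{itemize}
\item Use the projection inequality $\langle \pmb y-\Pi_{\mathcal K}\pmb y,\pmb{\rho^\star}-\Pi_{\mathcal K}\pmb y\rangle\le 0$ with $\pmb y=(1-\beta_k)\pmb{\rho^k}+\beta_k\pmb{\rho^0}-\alpha_kF(\pmb{\rho^k})$.
\item Combine with $\langle F(\pmb{\rho^\star}),\pmb{\rho^{k+1}}-\pmb{\rho^\star}\rangle\ge 0$ and monotonicity of $F$ on $\mathcal K$.
\end{itemize}
This should yield the recursion with
\[
\delta_k=2\langle\pmb{\rho^0}-\pmb{\rho^\star},\pmb{\rho^{k+1}}-\pmb{\rho^\star}\rangle+\text{error}_k .
\]
Here $\text{error}_k$ collects the term $\beta_k\|\pmb{\rho^0}-\pmb{\rho^k}\|^2$ and the cross term $\tfrac{2\alpha_k}{\beta_k}L\,\|\pmb{\rho^k}-\pmb{\rho^{k+1}}\|\,\|\pmb{\rho^{k+1}}-\pmb{\rho^\star}\|$, which arises from replacing $F(\pmb{\rho^{k}})$ by $F(\pmb{\rho^{k+1}})$.

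\emph{Main obstacle.} The hard step is showing $\text{error}_k\to 0$. Since $\alpha_k/\beta_k\sim\sqrt{k}/(\gamma L)$, I need $\|\pmb{\rho^{k+1}}-\pmb{\rho^k}\|_2=o(k^{-1/2})$. My first attempt will be to extract this from the potential-function analysis behind Theorem \ref{thrm:cc} in \cite{cai2026last}, which for anchored schemes typically controls the successive differences at rate $O(1/k)$. This could be done either directly, or via nonexpansiveness of $\Pi_{\mathcal K}$ together with a bound of $\|\pmb{\rho^{k+1}}-\pmb{\rho^k}\|$ by $\beta_k$ times a constant plus $\alpha_k$ times the tangential residual. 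If the rate only gives $O(k^{-1/2})$, the fallback is to regroup the cross term before dividing by $\beta_k$, pairing it with the negative term $-\|\pmb{\rho^{k+1}}-\pmb{\rho^k}\|^2$ from the projection inequality, and using Young's inequality. Tractability of the limit then follows from Corollary \ref{cor:a}.
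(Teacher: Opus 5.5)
Your proposal is correct. The Minty description of $\mathcal S$, the limsup inequality from accumulation points, and the closing Xu-type lemma are the same as in the paper. The real difference is how you derive the one-step recursion.

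\textbf{Your route.} You start from the variational projection inequality and use monotonicity at $(\pmb{\rho^{k+1}},\pmb{\rho^\star})$. This forces the swap of $F(\pmb{\rho^k})$ for $F(\pmb{\rho^{k+1}})$ and creates the cross term with factor $\alpha_k/\beta_k\sim\sqrt k$.

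\textbf{The paper's route.} It uses the fixed-point identity you noted, $\pmb{\rho^\star}=\Pi_{\mathcal K}(\pmb{\rho^\star}-\alpha_kF(\pmb{\rho^\star}))$, with nonexpansiveness of $\Pi_{\mathcal K}$. It then expands $\|(1-\beta_k)(\pmb{\rho^k}-\pmb{\rho^\star})+\beta_k(\pmb{\rho^0}-\pmb{\rho^\star})-\alpha_k(F(\pmb{\rho^k})-F(\pmb{\rho^\star}))\|_2^2$. Monotonicity is used at $(\pmb{\rho^k},\pmb{\rho^\star})$, so $F(\pmb{\rho^{k+1}})$ never appears. The Lipschitz term $\alpha_k^2L^2a_k=a_k/(k+\gamma)$ is absorbed into the contraction factor, so $\theta_k=2\beta_k-\beta_k^2-\alpha_k^2L^2\sim(2\gamma-1)/(k+\gamma)$ stays non-summable. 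The only additive error is $O(\beta_k^2+\alpha_k\beta_k)=O(k^{-3/2})=o(\theta_k)$. Your ``main obstacle'' never arises there, and Theorem \ref{thrm:conv} is needed only to place accumulation points in $\mathcal S$.

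\textbf{Closing your obstacle.} Either of your two options works.
\begin{enumerate}
\item For every $\pmb c\in N_{\mathcal K}(\pmb{\rho^k})$ we have $\pmb{\rho^k}=\Pi_{\mathcal K}(\pmb{\rho^k}+\alpha_k\pmb c)$. Nonexpansiveness then gives $\|\pmb{\rho^{k+1}}-\pmb{\rho^k}\|_2\le\beta_kD+\alpha_k\,r^{\mathrm{tan}}_{F,N_{\mathcal K}}(\pmb{\rho^k})=O(1/k)$, with $D$ the diameter of $\mathcal K$. Since $\alpha_k^2/\beta_k=1/(\gamma L^2)$, your cross term is $O(\alpha_k+r^{\mathrm{tan}}_{F,N_{\mathcal K}}(\pmb{\rho^k}))\to0$.
\item The Young fallback also works. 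For $k\ge1$, absorbing $2\alpha_kL\|\pmb{\rho^k}-\pmb{\rho^{k+1}}\|_2\sqrt{a_{k+1}}$ into the negative term $(1-\beta_k)\|\pmb{\rho^k}-\pmb{\rho^{k+1}}\|_2^2$ costs $\alpha_k^2L^2a_{k+1}/(1-\beta_k)=a_{k+1}/k$. You get $(1+\beta_k-1/k)\,a_{k+1}\le(1-\beta_k)a_k+2\beta_k\langle\pmb{\rho^0}-\pmb{\rho^\star},\pmb{\rho^{k+1}}-\pmb{\rho^\star}\rangle$. Its effective rate is $(2\beta_k-1/k)/(1+\beta_k-1/k)\sim(2\gamma-1)/k$, and there is no additive error at all.
\end{enumerate}

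\textbf{Comparison.} The paper's route and your fallback both rest on $\alpha_k^2L^2$ being dominated by $2\beta_k$; the fallback is slightly tidier because it has no additive error. Your primary route is the only one that needs the residual estimate a second time.
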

\begin{proof}
We show in the appendix that $\lim_{T \to \infty} \pmb{\rho^T}=\Pi_{\mathcal S}(\pmb{\rho^0}).$
The convergence of the iterations in \eqref{eq:anchored-update} follows from Theorem \ref{thrm:conv}, Lemma \ref{lem:cc}, and the fact that $\lim_{T \to \infty} \pmb{\rho^T}=\Pi_{\mathcal S}(\pmb{\rho^0}).$
When one has access to $P$, the QP in \eqref{eq:proj} can be solved in polynomial time by Corollary \ref{cor:a}. Thus, the algorithm is fully tractable and converges to an exact MFE.
\end{proof}

Moreover, under our assumptions, one can construct an example for which the rate \(\Omega(1/\sqrt{T})\) is also attained as a matching lower bound for this iteration scheme. In this sense, the bound in Theorem \ref{thrm:conv} is tight for this algorithm.

\begin{proposition}\label{prop:a}
    There exists a mean-field game with time-homogeneous reward function and transition kernel for which the convergence rate in Theorem \ref{thrm:conv} is matched by a corresponding lower bound.
\end{proposition}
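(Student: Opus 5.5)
We plan to exhibit one explicit, fixed game in which the anchoring term itself forces a bias of order $\beta_k/\alpha_k\asymp \gamma L/\sqrt{k}$. We will show that the tangential residual of \eqref{eq:anchored-update} cannot decay faster than this, so it is $\Omega(1/\sqrt{T})$, matching Theorem \ref{thrm:conv} up to constants. The heuristic is the following. A point $\pmb\rho$ that is stationary for the anchored step satisfies $\alpha F(\pmb\rho)+\beta(\pmb\rho-\pmb{\rho^0})\in -N_{\mathcal K}(\pmb\rho)$. Its residual is therefore roughly $(\beta/\alpha)\|\pmb\rho-\pmb{\rho^0}\|$ whenever the anchor is not itself an equilibrium.

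\emph{Construction.} We take $S=\{1,2\}$, $A=\{1,2\}$, $H=2$ and $\mu_0=(1/2,1/2)$. The kernel is deterministic: action $a$ sends the agent to state $a$, i.e.\ $P(\cdot\mid s,a)=\delta_a$. The reward is time-homogeneous: $r(s,a,\mu)=-\lambda\mu(s)$ for some $\lambda>0$ (crowd aversion).

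\emph{Checking the assumptions.} Assumption \ref{ass:2} holds by construction. Assumption \ref{lip} holds with $L_r=\lambda$. For Assumption \ref{weakmon}, the sum in \eqref{mon} equals $-\lambda\sum_h\|\mu_h^{\pmb\pi}-\mu_h^{\pmb{\tilde\pi}}\|_2^2\le 0$.

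\emph{Reducing the iteration to a scalar recursion.} At $h=0$, $F$ is constant across actions in each state, since $\mu_0$ is fixed. Hence these components lie in the span of the normal directions generated by the constraints $\sum_a\rho_0(s,a)=\mu_0(s)$, and they contribute nothing to the tangential motion. The only nontrivial coordinate is $x:=\mu_1(1)-\tfrac12$, which equals $\rho_0(1,1)+\rho_0(2,1)-\tfrac12$. The unique MFE has $x=0$. We choose $\pmb{\rho^0}$ with $x_0\neq 0$ lying in the relative interior of $\mathcal K$, for instance $x_0=1/4$ with all coordinates strictly positive.

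We then compute the projection explicitly on the affine hull of $\mathcal K$, using that the constraint matrix is simple. We will show that, as long as the iterates remain in the relative interior, the $x$-coordinate evolves as
\[
x_{k+1}=(1-\beta_k)x_k+\beta_k x_0-c\,\alpha_k\lambda\,x_k
\]
for an explicit constant $c>0$. The coordinates other than $x$ are either frozen or evolve affinely and harmlessly. Unrolling this recursion with \eqref{eq:stepsizes}, and using that $\beta_k/(\alpha_k c\lambda)\asymp\gamma L/(c\lambda\sqrt k)\to 0$, we get that $x_k$ tracks the moving fixed point $x_0\beta_k/(\beta_k+c\alpha_k\lambda)$. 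This yields $|x_k|\ge c'\,\gamma/\sqrt{k+\gamma}$ for all large $k$. The same computation shows that $x_k$ stays between $0$ and $x_0$, which is the invariant we need.

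\emph{Lower-bounding the residual.} On the relative interior, $N_{\mathcal K}(\pmb\rho)$ is exactly the orthogonal complement of the tangent space of $\mathcal K$. Thus $r^{\mathrm{tan}}_{F,N_{\mathcal K}}(\pmb\rho)$ is the norm of the tangential projection of $F(\pmb\rho)$. The $h=1$ block of $F$ is $\lambda(\mu_1(s))_{s,a}$, and its tangential projection has norm $\ge c''\lambda|x|$. Combining this with the previous step gives $r^{\mathrm{tan}}(\pmb{\rho^T})\ge C/\sqrt{T-1+\gamma}$, the desired matching lower bound.

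\emph{Main obstacle.} The main obstacle is verifying that the iterates never hit the boundary of $\mathcal K$. If they did, the projection would become nonlinear and the normal cone would enlarge, which could shrink the residual. We would handle this with an induction showing that every coordinate of $\pmb{\rho^k}$ remains a convex combination of coordinates of $\pmb{\rho^0}$ and the MFE. This uses $x_k\in[0,x_0]$ together with the explicit symmetric form of the update, and it keeps all coordinates bounded below by a positive constant. If the induction fails for small $k$, we would instead choose $\lambda$ small relative to $L$ so that the steps $\alpha_k\lambda$ are tiny.
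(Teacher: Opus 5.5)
Your proposal is correct and follows essentially the same route as the paper. Both use a two-period crowd-averse game with a measure-independent kernel and a non-equilibrium anchor, show that a segment stays in $\operatorname{ri}(\mathcal K)$ so that $\Pi_{\mathcal K}$ acts affinely, reduce to a scalar recursion, and compute the residual as the norm of the tangential part of $F$. In your game the recursion is $x_{k+1}=(1-\beta_k-\alpha_k\lambda)x_k+\beta_k x_0$, so $c=1$, its solution is of order $\gamma/\sqrt{k+\gamma}$, and the residual is $\sqrt2\,\lambda|x|$. The paper instead uses $S=\{0,1,2\}$, $\mu_0=\delta_0$, $\gamma=2$ and the anchor $\pmb\rho(1)$.

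The one point to fix is the choice of anchor. All iterates have the form $\pmb{\rho^0}+s_k\pmb d$, with the fixed direction $\pmb d=(1,-1,1,-1,1,1,-1,-1)$ in the ordering $(\rho_0(1,\cdot),\rho_0(2,\cdot),\rho_1(1,\cdot),\rho_1(2,\cdot))$. They converge to $\pmb{\rho^0}-\tfrac{x_0}{2}\pmb d$, so ``$x_0=1/4$ with all coordinates positive'' is not enough. For example, if $\rho^0_1(1,2)<1/8$, the affine update eventually exits $\mathcal K$ and the projection becomes active. Shrinking $\lambda$ cannot prevent this, because $x_k\to0$ regardless. The symmetric anchor $\rho_0(s,\cdot)=(3/8,1/8)$, $\rho_1(1,\cdot)=(3/8,3/8)$, $\rho_1(2,\cdot)=(1/8,1/8)$ works: its limit point has all entries $1/4$, and your convex-combination invariant then goes through.
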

\begin{proof}
    We will provide the example here but defer the proof of the lower bound for the rate of convergence of the iterations to the appendix.

    For the proof we consider the finite-horizon MFG with $H=2$, state space
$S=\{0,1,2\}$, action space $A=\{L,R\}$, and initial distribution
$\mu_0=\delta_0$. The transition kernel is independent of the population
measure and satisfies $P(1\mid1,\cdot)\equiv P(2\mid 2,\cdot)=1$ and
\[
P(1\mid 0,L)=P(2\mid 0,R)=1,
\qquad
P(2\mid 0,L)=P(1\mid 0,R)=0,
\]
with states $1$ and $2$ absorbing. The rewards are
\[
r(0,\cdot,\cdot)\equiv 0,
\qquad
r(s,a,\mu)=-\mu(s),
\quad s\in\{1,2\},\ a\in\{L,R\}.
\]
It can be easily shown that this example satisfies the assumptions \ref{weakmon}--\ref{lip}.
\end{proof}

Several remarks are in order concerning Proposition \ref{prop:a}. The upper bound proved in Theorem \ref{thrm:conv} is not optimal in general for other gradient methods in  unconstrained monotone inclusion problems; see \cite{yoon2021accelerated}. In our case, however, since \(N_{\mathcal K} \not\equiv \emptyset\), the problem falls into the constrained setting. For constrained monotone inclusion problems, \cite{cai2022accelerated} shows that, under additional assumptions---which in our framework amount to comonotonicity of the operator \((F,N_{\mathcal K})\)---one can recover the sharper rate established in \cite{yoon2021accelerated}.

\section{Conclusion}

In this work, by formulating the MFE problem as a monotone inclusion problem, we developed algorithms with explicit convergence guarantees for computing MFEs in weakly monotone MFGs. In addition, Proposition \ref{prop:a} shows that the rate \(\Omega(1/\sqrt{T})\) appears as a matching lower bound for our algorithm, implying the tightness of the upper bound.

The main limitation of our analysis is Assumption \ref{ass:2}. An important direction for future work is therefore to relax this assumption. A second interesting direction is to extend the convergence theory of anchored extragradient and proximal methods to monotone inclusion problems involving merely Lipschitz continuous monotone operators together with maximal monotone perturbations. Such optimistic methods can achieve faster convergence rates of order \(O(1/T)\), and are therefore especially attractive for this class of problems.

\bibliography{bibliography}

%%%%%%%%%%%%%%%%%%%%%%%%%%%%%%%%%%%%%%%%%%%%%%%%%%%%%%%%%%%%

\appendix

\section{Detailed Proofs of the Statements in Section \ref{sect:3}}

\begin{lemma}\label{lem:a}
    Every policy flow induces a feasible occupation flow.
\end{lemma}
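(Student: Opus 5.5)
Given a policy flow $\pmb\pi=(\pi_h)_{h=0}^{H-1}$, we will construct its state-measure flow recursively, define the occupation flow from it, and then check the defining constraints of $\mathcal K$ one at a time.

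The construction is as follows. Set $\mu_0^{\pmb\pi}:=\mu_0$. Using Assumption \ref{ass:2}, define for $h=0,\dots,H-2$
\[
\mu_{h+1}^{\pmb\pi}(s'):=\sum_{s\in S}\sum_{a\in A}P(s'\mid s,a)\,\pi_h(a\mid s)\,\mu_h^{\pmb\pi}(s).
\]
This is exactly the law of $x_{h+1}$ under $\pmb\pi$ started from $x_0\sim\mu_0$. Then put $\rho_h^{\pmb\pi}(s,a):=\pi_h(a\mid s)\mu_h^{\pmb\pi}(s)$.

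The verification proceeds in three steps.

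First, nonnegativity. Each $\rho_h^{\pmb\pi}$ is a product of nonnegative numbers, so $\rho_h^{\pmb\pi}\in\mathbb R_+^{S\times A}$.

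Second, the initial constraint. Since $\pi_0(\cdot\mid s)\in\mathcal P(A)$, we have $\sum_a\pi_0(a\mid s)=1$, and hence
\[
\sum_a\rho_0^{\pmb\pi}(s,a)=\mu_0(s)\sum_a\pi_0(a\mid s)=\mu_0(s).
\]

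Third, the flow constraint for $h=0,\dots,H-2$. By the same normalization,
\[
\sum_a\rho_{h+1}^{\pmb\pi}(s',a)=\mu_{h+1}^{\pmb\pi}(s'),
\]
and by the recursive definition this equals $\sum_{s,a}P(s'\mid s,a)\rho_h^{\pmb\pi}(s,a)$. This is the required linear constraint.

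It remains to confirm that each $\mu_h^{\pmb\pi}$ is a probability measure, so that the construction is consistent with the paper's notion of a state-measure flow. We argue by induction on $h$. Nonnegativity is immediate. For the mass, $\sum_{s'}\mu_{h+1}^{\pmb\pi}(s')=\sum_{s,a}\pi_h(a\mid s)\mu_h^{\pmb\pi}(s)=1$, using $\sum_{s'}P(s'\mid s,a)=1$.

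If the paper allows $\pmb\mu^{\pmb\pi}$ to be any flow associated with $\pmb\pi$, we note that under Assumption \ref{ass:2} the consistency relation \eqref{eq:last} determines it uniquely as the recursion above.

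The only potential subtlety is the case $\mu_h^{\pmb\pi}(s)=0$. There $\pi_h(\cdot\mid s)$ is arbitrary but contributes zero mass to $\rho_h^{\pmb\pi}$, so no step of the argument is affected. The proof is routine, and we expect no real obstacle.
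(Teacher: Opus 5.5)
Your proof is correct and follows the same route as the paper: define $\mu_h^{\pmb\pi}$ by the forward recursion from $\mu_0$, set $\rho_h^{\pmb\pi}(s,a)=\pi_h(a\mid s)\mu_h^{\pmb\pi}(s)$, and verify the initial and flow constraints of $\mathcal K$ using $\sum_{a}\pi_h(a\mid s)=1$. Your version is slightly more careful than the paper's write-up. You check nonnegativity explicitly, and you use the correct initialization $\mu_0^{\pmb\pi}=\mu_0$ and the correct index range $h=0,\dots,H-2$, where the paper writes $\mu_1^{\pmb\pi}=\mu_0$ and $h=1,\dots,H-1$.
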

\begin{proof}
Fix a policy flow $\pmb \pi=(\pi_h)_{h=0}^{H-1}$. Let $\pmb{\mu^\pi}=(\mu_h^{\pmb \pi})_{h=0}^{H-1}$ be the corresponding state flow,
defined recursively by
\[
\mu_1^{\pmb \pi}=\mu_0,\qquad
\mu_{h+1}^{\pmb \pi}(s')=
\sum_{s\in S}\sum_{a\in A}P(s'\mid s,a)\,\pi_h(a\mid s)\,\mu_h^{\pmb \pi}(s)
\quad (h=0,\dots,H-2).
\]
Define an occupation flow associated with $\pmb \pi$ (starting from $\pi_0(a|s)\mu_0(s)$) by
\[
\rho_h^{\pmb \pi}(s,a):=\pi_h(a\mid s)\,\mu_h^{\pmb \pi}(s).
\]
Then for every $s\in S$,
\[
\sum_{a\in A}\rho_0^{\pmb \pi}(s,a)
=
\sum_{a\in A}\pi_0(a\mid s)\,\mu_0^{\pmb \pi}(s)
=
\mu_0^{\pmb \pi}(s)
=
\mu_0(s).
\]
Also, for each $h=1,\dots,H-1$ and $s'\in S$,
\begin{align*}
\sum_{a\in A}\rho_{h+1}^{\pmb \pi}(s',a)
=
\mu_{h+1}^{\pmb \pi}(s')
&=
\sum_{s\in S}\sum_{a\in A}P(s'\mid s,a)\,\pi_h(a\mid s)\,\mu_h^{\pmb \pi}(s)
\\&=
\sum_{s\in S}\sum_{a\in A}P(s'\mid s,a)\,\rho_h^{\pmb \pi}(s,a).
\end{align*}
Hence $\pmb {\rho^\pi}\in \mathcal{K}$.
\end{proof}
\begin{lemma}\label{lem:b}
Every feasible occupation flow comes from some policy flow.
\end{lemma}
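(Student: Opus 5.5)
Given \(\pmb\rho\in\mathcal K\), I will build the policy flow by disintegration and then check, by induction on \(h\), that the state flow it generates coincides with the induced state flow \(\mu_h^{\pmb\rho}\). Concretely, set \(\mu_h^{\pmb\rho}(s)=\sum_{a}\rho_h(s,a)\) and define
\[
\pi_h(a\mid s)=\frac{\rho_h(s,a)}{\mu_h^{\pmb\rho}(s)}\ \text{ if } \mu_h^{\pmb\rho}(s)>0,\qquad \pi_h(\cdot\mid s)=\nu \ \text{ otherwise},
\]
where \(\nu\in\mathcal P(A)\) is fixed arbitrarily (for instance uniform), exactly as in \eqref{eq:policy-recovery}. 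Nonnegativity of \(\rho_h\) guarantees that \(\pi_h(\cdot\mid s)\) is a probability vector.

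The key identity is \(\rho_h(s,a)=\pi_h(a\mid s)\,\mu_h^{\pmb\rho}(s)\) for all \(s,a,h\). When \(\mu_h^{\pmb\rho}(s)>0\) this is immediate from the definition. When \(\mu_h^{\pmb\rho}(s)=0\), nonnegativity forces \(\rho_h(s,a)=0\) for every \(a\), so both sides vanish.

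Next, let \(\pmb{\mu}^{\pmb\pi}\) be the state flow generated by \(\pmb\pi\) through the recursion in Lemma \ref{lem:a}, started at \(\mu_0\). I show \(\mu_h^{\pmb\pi}=\mu_h^{\pmb\rho}\) by induction on \(h\).
\begin{itemize}
\item \textbf{Base case.} The first constraint in \(\mathcal K\) gives \(\mu_0^{\pmb\rho}=\mu_0\).
\item \textbf{Inductive step.} Assuming \(\mu_h^{\pmb\pi}=\mu_h^{\pmb\rho}\), we get
\[
\mu_{h+1}^{\pmb\pi}(s')=\sum_{s,a}P(s'\mid s,a)\,\pi_h(a\mid s)\,\mu_h^{\pmb\rho}(s)=\sum_{s,a}P(s'\mid s,a)\,\rho_h(s,a)=\sum_a\rho_{h+1}(s',a)=\mu_{h+1}^{\pmb\rho}(s').
\]
The middle equality uses the key identity. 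The next one is the flow constraint defining \(\mathcal K\), and it is here that Assumption \ref{ass:2} is used: \(P\) has no measure argument.
\end{itemize}
Consequently \(\rho_h^{\pmb\pi}(s,a)=\pi_h(a\mid s)\,\mu_h^{\pmb\pi}(s)=\rho_h(s,a)\), so \(\pmb\rho=\pmb{\rho^\pi}\) and \(\pmb\rho\) comes from the policy flow \(\pmb\pi\).

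There is no real obstacle. The only points needing care are the zero-mass states, where the arbitrary choice of \(\nu\) is harmless because \(\rho_h(s,\cdot)\) vanishes there, and noting that the recursion is indexed from \(h=0\).
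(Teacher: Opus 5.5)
Your proposal is correct and follows the paper's proof: you disintegrate $\pmb\rho$ into a policy flow, show by induction on $h$ (using the flow constraints of $\mathcal K$) that the generated state flow equals $\pmb{\mu}^{\pmb\rho}$, and conclude that the induced occupation flow is $\pmb\rho$. Your handling of zero-mass states is slightly more careful than the paper's. You derive $\rho_h(s,\cdot)=0$ from nonnegativity and use it already in the inductive step, whereas the paper invokes this only as a convention at the end.
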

\begin{proof}
Fix $\pmb \rho\in \mathcal{K}$ and define
\[
\mu_h^{\pmb \rho}(s):=\sum_{a\in A}\rho_h(s,a).
\]
We define a policy $\pmb{\pi^\rho}=(\pi_h^{\pmb \rho})_{h=0}^{H-1}$ by
\[
\pi_h^{\pmb \rho}(a\mid s):=
\begin{cases}
\dfrac{\rho_h(s,a)}{\mu_h^{\pmb \rho}(s)}, & \text{if }\mu_h^{\pmb \rho}(s)>0,\\[2mm]
\text{an arbitrary element of }\mathcal P(A), & \text{if }\mu_h^{\pmb \rho}(s)=0.
\end{cases}
\]
We claim that the state flow generated by the policy flow $\pmb {\pi^\rho}$ is exactly $\pmb {\mu^\rho}$ and that its occupation
flow is exactly $\pmb \rho$.

We prove by induction on $h$ that $\mu_h^{\pmb {\pi^\rho}}=\mu_h^{\pmb \rho}$.
For $h=0$,
\[
\mu_0^{\pmb {\pi^\rho}}(s)=\mu_0(s)=\sum_{a\in A}\rho_0(s,a)=\mu_0^{\pmb \rho}(s).
\]
Assume that $\mu_k^{\pmb {\pi^\rho}}=\mu_k^{\pmb \rho}$ for all $k=1,\cdots,h$. Then, for each $s'\in S$,
\begin{align*}
\mu_{h+1}^{\pmb {\pi^\rho}}(s')
&=
\sum_{s\in S}\sum_{a\in A}P(s'\mid s,a)\,\pi_h^{\pmb \rho}(a\mid s)\,\mu_h^{\pmb {\pi^\rho}}(s)\\
&=
\sum_{s\in S}\sum_{a\in A}P(s'\mid s,a)\,\pi_h^{\pmb \rho}(a\mid s)\,\mu_h^{\pmb \rho}(s)\\
&=
\sum_{s\in S}\sum_{a\in A}P(s'\mid s,a)\,\rho_h(s,a)\\
&=
\sum_{a\in A}\rho_{h+1}(s',a)
\\&=
\mu_{h+1}^{\pmb \rho}(s').
\end{align*}
By induction, it follows that $\pmb \mu^{\pmb {\pi^\rho}}=\pmb \mu^{\pmb \rho}$.

Finally,
\[
\rho_h^{\pmb {\pi^\rho}}(s,a)=\pi_h^{\pmb \rho}(a\mid s)\mu_h^{\pmb {\pi^\rho}}(s)
=\pi_h^{\pmb \rho}(a\mid s)\mu_h^{\pmb \rho}(s)=\rho_h(s,a),
\]
with the convention that if $\mu_h^{\pmb \rho}(s)=0$, then both sides are zero. Thus every $\pmb \rho\in\mathcal{K}$ is the occupation flow of some policy flow.
\end{proof}

\begin{lemma}\label{lem:6}
$\mathcal{K}$ is a nonempty compact convex polytope.
\end{lemma}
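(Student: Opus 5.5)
We prove the four properties of Lemma~\ref{lem:6} in turn (nonempty, convex, compact, polytope), using the explicit description of $\mathcal K$ as the intersection of the nonnegative orthant of $\mathbb R^{H|S||A|}$ with finitely many affine equalities.

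\textbf{Nonemptiness.} Fix any policy flow, for instance $\pi_h(\cdot\mid s)$ uniform on $A$ for every $h$ and $s$. By Lemma~\ref{lem:a}, its occupation flow $\pmb{\rho^\pi}$ lies in $\mathcal K$.

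\textbf{Polytope and convexity.} Under Assumption~\ref{ass:2}, every defining constraint of $\mathcal K$ is linear in $\pmb\rho$, because $P(s'\mid s,a)$ does not depend on $\pmb\rho$. There are three kinds of constraint: nonnegativity $\rho_h(s,a)\ge 0$, the initial condition $\sum_a\rho_0(s,a)=\mu_0(s)$, and the flow constraints $\sum_a\rho_{h+1}(s',a)-\sum_{s,a}P(s'\mid s,a)\rho_h(s,a)=0$. Hence $\mathcal K=\{\pmb\rho\ge 0:\pmb A\pmb\rho=\pmb b\}$ is a finite intersection of closed half-spaces and hyperplanes, i.e.\ a polyhedron. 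It is therefore convex and closed.

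\textbf{Boundedness.} We show by induction on $h$ that every $\pmb\rho\in\mathcal K$ satisfies $\sum_{s,a}\rho_h(s,a)=1$.
\begin{itemize}
\item For $h=0$, summing the initial constraint over $s$ gives $\sum_{s,a}\rho_0(s,a)=\sum_s\mu_0(s)=1$.
\item For the inductive step, summing the flow constraint over $s'$ and using $\sum_{s'}P(s'\mid s,a)=1$ gives $\sum_{s',a}\rho_{h+1}(s',a)=\sum_{s,a}\rho_h(s,a)=1$.
\end{itemize}
Together with nonnegativity, this gives $0\le\rho_h(s,a)\le 1$ for every coordinate. So $\mathcal K\subset[0,1]^{H|S||A|}$ is bounded. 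It is also closed, so by Heine--Borel it is compact, and a bounded polyhedron is a polytope.

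The only step with any content is the boundedness argument: one must notice that total mass is conserved by the row-stochasticity of $P$ and propagated through the horizon. Everything else is immediate from linearity of the constraints, which is exactly where Assumption~\ref{ass:2} is used.
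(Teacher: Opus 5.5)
Your proof is correct and follows essentially the same route as the paper: nonemptiness from Lemma~\ref{lem:a}, polyhedrality from the finitely many linear constraints, and boundedness by induction showing that each stage has total mass $1$ (by row-stochasticity of $P$), after which closed plus bounded gives compactness. Your explicit remark that a bounded polyhedron is a polytope makes precise a step the paper leaves implicit.
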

\begin{proof}
By definition, $\mathcal{K}$ is the intersection of finitely many affine hyperplanes and the nonnegative
orthant, hence it is a convex polyhedron.
By Lemma \ref{lem:a} it is nonempty.

To show boundedness, let $\pmb \rho\in \mathcal{K}$. Summing the initial constraints over $s$ gives
\[
\sum_{s\in S}\sum_{a\in A}\rho_0(s,a)=\sum_{s\in S}\mu_0(s)=1.
\]
Now assume $\sum_{s,a}\rho_h(s,a)=1$. Then,
\begin{align*}
\sum_{s'\in S}\sum_{a\in A}\rho_{h+1}(s',a)
&=
\sum_{s'\in S}\sum_{s\in S}\sum_{a\in A}P(s'\mid s,a)\rho_h(s,a)\\
&=
\sum_{s\in S}\sum_{a\in A}\rho_h(s,a)\sum_{s'\in S}P(s'\mid s,a)\\
&=
\sum_{s\in S}\sum_{a\in A}\rho_h(s,a)
\\&=1.
\end{align*}
Hence each stage occupation vector has total mass $1$, and thus each coordinate belongs to $[0,1]$.
Therefore $\mathcal{K}$ is bounded.
Being a closed and bounded polyhedron, it is compact.
Thus $\mathcal{K}$ is a nonempty compact convex polytope.
\end{proof}
\begin{lemma}
    Best response at fixed mean field is a linear program on $\mathcal{K}$
\end{lemma}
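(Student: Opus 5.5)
Fix a state-measure flow \(\pmb\mu=(\mu_h)_{h=0}^{H-1}\). The claim is that maximizing \(J(\pmb\pi,\pmb\mu)\) over policy flows \(\pmb\pi\) is equivalent to maximizing the linear functional
\[
L_{\pmb\mu}(\pmb\rho)=\sum_{h=0}^{H-1}\sum_{s\in S}\sum_{a\in A} r(s,a,\mu_h)\,\rho_h(s,a)
\]
over \(\pmb\rho\in\mathcal K\). I would prove this in three steps.

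\textbf{Step 1 (the objective is linear in the occupation flow).} Let \(\pmb\pi\) be any policy flow. Under Assumption \ref{ass:2} the state process \(x_h\) evolves under \(P(\cdot\mid s,a)\), which does not depend on \(\pmb\mu\). Hence the marginal law of \(x_h\) is exactly the state flow \(\pmb\mu^{\pmb\pi}\) from Lemma \ref{lem:a}, and the joint law of \((x_h,a_h)\) is \(\rho_h^{\pmb\pi}(s,a)=\pi_h(a\mid s)\mu_h^{\pmb\pi}(s)\). This can be checked by a short induction on \(h\) using the Markov structure. Linearity of expectation then gives
\[
J(\pmb\pi,\pmb\mu)=\sum_h \mathbb E\bigl[r(x_h,a_h,\mu_h)\bigr]=\sum_h\sum_{s,a} r(s,a,\mu_h)\,\rho_h^{\pmb\pi}(s,a)=L_{\pmb\mu}(\pmb\rho^{\pmb\pi}).
\]
This is identity \eqref{J}. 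The map \(\pmb\rho\mapsto L_{\pmb\mu}(\pmb\rho)\) is clearly linear, since \(\pmb\mu\) is held fixed.

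\textbf{Step 2 (policies and \(\mathcal K\) correspond).} Lemma \ref{lem:a} shows that the map \(\pmb\pi\mapsto\pmb\rho^{\pmb\pi}\) sends policy flows into \(\mathcal K\). Lemma \ref{lem:b} shows that this map is onto \(\mathcal K\). Therefore
\[
\sup_{\pmb\pi} J(\pmb\pi,\pmb\mu)=\max_{\pmb\rho\in\mathcal K} L_{\pmb\mu}(\pmb\rho).
\]
Moreover, \(\pmb\pi^*\) is a best response to \(\pmb\mu\) if and only if \(\pmb\rho^{\pmb\pi^*}\) maximizes \(L_{\pmb\mu}\) over \(\mathcal K\), and the disintegration \eqref{eq:policy-recovery} recovers a best-response policy from any maximizer.

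\textbf{Step 3 (it is a linear program).} By Lemma \ref{lem:6}, \(\mathcal K\) is a nonempty compact polytope described by linear equalities and nonnegativity constraints. The best-response problem is therefore the LP
\[
\max_{\pmb\rho}\; \langle -F_{\pmb\mu},\pmb\rho\rangle \quad\text{subject to}\quad \pmb A\pmb\rho=\pmb b,\ \pmb\rho\ge 0,
\]
where \([F_{\pmb\mu}]_{h,s,a}=-r(s,a,\mu_h)\). Its feasible set is nonempty and compact, so a maximizer exists and is attained at a vertex.

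The main point to handle carefully is Step 1. The identification of the law of \((x_h,a_h)\) with \(\rho^{\pmb\pi}_h\) must use only the fixed kernel \(P\). If \(P\) depended on \(\pmb\mu\), the constraint set would depend on \(\pmb\mu\); for a fixed \(\pmb\mu\) this would still be an LP, but with \(\pmb\mu\)-dependent constraints. It is Assumption \ref{ass:2} that makes \(\mathcal K\) the same feasible set for every \(\pmb\mu\), and the later equivalence in Theorem \ref{thrm:main} relies on that.
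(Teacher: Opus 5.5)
Your proposal is correct and takes essentially the same route as the paper. Both first establish $J(\pmb\pi,\pmb\mu)=L_{\pmb\mu}(\pmb\rho^{\pmb\pi})$ using Assumption~\ref{ass:2}, then use Lemmas~\ref{lem:a} and~\ref{lem:b} to turn maximization over policy flows into maximization of the linear functional $L_{\pmb\mu}$ over $\mathcal K$; your induction on the law of $(x_h,a_h)$ and your Step~3 (the explicit form $\pmb A\pmb\rho=\pmb b$, $\pmb\rho\ge 0$, with the maximum attained at a vertex) only spell out details the paper leaves implicit.
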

\begin{proof}
Fix an exogenous mean-field flow $\pmb\mu=(\mu_h)_{h=0}^{H-1}$.
For a policy flow $\pmb\pi=(\pi_h)_{h=0}^{H-1}$, let
$\pmb\mu^{\pmb\pi}=(\mu_h^{\pmb\pi})_{h=0}^{H-1}$ denote the state-flow induced by
$\pmb\pi$ under the transition kernel $P(\cdot\mid s,a)$ and the initial distribution
$\mu_0$. Define a corresponding occupation flow
$\pmb\rho^{\pmb\pi}=(\rho_h^{\pmb\pi})_{h=0}^{H-1}$ by
\[
\rho_h^{\pmb\pi}(s,a):=\pi_h(a\mid s)\mu_h^{\pmb\pi}(s).
\]
Then the best-response objective at the exogenous flow $\pmb\mu$ is
\[
J(\pmb\mu,\pmb\pi)
:=
\mathbb E^{\pmb\pi}\!\left[\sum_{h=0}^{H-1} r(x_h,a_h,\mu_h)\right],
\]
and therefore by Assumption \ref{ass:2} we obtain
\[
J(\pmb\mu,\pmb\pi)
=
\sum_{h=0}^{H-1}\sum_{s\in S}\sum_{a\in A}
\rho_h^{\pmb\pi}(s,a)\,r(s,a,\mu_h).
\]

Therefore, for a fixed exogenous mean-field flow $\pmb\mu=(\mu_h)_{h=0}^{H-1}$,
maximizing $J(\pmb\mu,\pmb\pi)$ over policy flows $\pmb\pi$ is equivalent to maximizing
the linear functional
\[
L_{\pmb\mu}(\pmb\rho)
:=
\sum_{h=0}^{H-1}\sum_{s\in S}\sum_{a\in A}
\rho_h(s,a)\,r(s,a,\mu_h)
\]
over $\pmb\rho=(\rho_h)_{h=0}^{H-1}\in \mathcal K$.
Indeed, by Lemma~\ref{lem:a}, every policy flow $\pmb\pi$ induces an occupation flow
$\pmb\rho^{\pmb\pi}\in\mathcal K$, and by Lemma~\ref{lem:b}, every
$\pmb\rho\in\mathcal K$ is the occupation flow of some policy flow. Moreover, if
$\pmb\rho^{\pmb\pi}$ denotes the occupation flow induced by $\pmb\pi$, then
\[
J(\pmb\mu,\pmb\pi)
=
\sum_{h=0}^{H-1}\sum_{s\in S}\sum_{a\in A}
\rho_h^{\pmb\pi}(s,a)\,r(s,a,\mu_h)
=
L_{\pmb\mu}(\pmb\rho^{\pmb\pi}).
\]
Hence maximizing $J(\pmb\mu,\pmb\pi)$ over policy flows is equivalent to maximizing
$L_{\pmb\mu}(\pmb\rho)$ over $\pmb\rho\in\mathcal K$.
\end{proof}
\begin{lemma}
Mean-field equilibria are exactly solutions of the variational inequality \eqref{eq:vi-main}.
\end{lemma}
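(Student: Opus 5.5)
The plan is to reduce the equilibrium conditions \eqref{eq:first}--\eqref{eq:last} to a statement about the linear functional $L_{\pmb\mu}$ on $\mathcal K$. We use the preceding lemma: best response at a fixed mean field is the linear program $\max_{\pmb\rho\in\mathcal K}L_{\pmb\mu}(\pmb\rho)$. We also use Lemmas \ref{lem:a} and \ref{lem:b}, which identify policy flows with feasible occupation flows, up to their values on null states. Let $(\pmb{\pi^*},\pmb{\mu^*})$ be given and set $\rho^*_h(s,a)=\pi^*_h(a\mid s)\mu^*_h(s)$. By definition of $F$, for every $\pmb\rho\in\mathcal K$ we have
\[
\langle F(\pmb{\rho^*}),\pmb\rho-\pmb{\rho^*}\rangle=-\bigl(L_{\pmb\mu^{\pmb{\rho^*}}}(\pmb\rho)-L_{\pmb\mu^{\pmb{\rho^*}}}(\pmb{\rho^*})\bigr),
\]
where $\pmb\mu^{\pmb{\rho^*}}$ is the induced state flow of $\pmb{\rho^*}$. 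Thus \eqref{eq:vi-main} says exactly that $\pmb{\rho^*}$ maximizes $L_{\pmb\mu^{\pmb{\rho^*}}}$ over $\mathcal K$.

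($\Rightarrow$) Suppose $(\pmb{\pi^*},\pmb{\mu^*})$ is an MFE. By Assumption \ref{ass:2}, the consistency condition \eqref{eq:last} is exactly the recursion defining the state flow generated by $\pmb{\pi^*}$ from $\mu_0$. By the computation in Lemma \ref{lem:a}, $\pmb{\rho^*}\in\mathcal K$ and $\pmb\mu^{\pmb{\rho^*}}=\pmb{\mu^*}$. Take any $\pmb\rho\in\mathcal K$. Lemma \ref{lem:b} gives a policy flow $\pmb\pi$ with $\pmb\rho^{\pmb\pi}=\pmb\rho$, and \eqref{J} gives $L_{\pmb{\mu^*}}(\pmb\rho)=J(\pmb\pi,\pmb{\mu^*})$. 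Optimality \eqref{eq:first} then yields $J(\pmb\pi,\pmb{\mu^*})\le J(\pmb{\pi^*},\pmb{\mu^*})=L_{\pmb{\mu^*}}(\pmb{\rho^*})$, which is \eqref{eq:vi-main}.

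($\Leftarrow$) Suppose \eqref{eq:vi-main} holds. This direction only makes sense if $\pmb{\rho^*}\in\mathcal K$, since $F$ is defined on $\mathcal K$ and the VI is posed there, so we take $\pmb{\rho^*}\in\mathcal K$ as part of the hypothesis.
\begin{enumerate}
\item We check consistency. Summing $\rho^*_h$ over $a$ gives $\mu^*_h$. The flow constraints defining $\mathcal K$ then read $\mu^*_{h+1}(s')=\sum_{s,a}P(s'\mid s,a)\pi^*_h(a\mid s)\mu^*_h(s)$, which is \eqref{eq:last} under Assumption \ref{ass:2}. The same constraints give $\mu^*_0=\mu_0$.
\item We check optimality. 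For any policy flow $\pmb\pi$, Lemma \ref{lem:a} gives $\pmb\rho^{\pmb\pi}\in\mathcal K$, and \eqref{J} gives $J(\pmb\pi,\pmb{\mu^*})=L_{\pmb{\mu^*}}(\pmb\rho^{\pmb\pi})$. The VI then gives $L_{\pmb{\mu^*}}(\pmb\rho^{\pmb\pi})\le L_{\pmb{\mu^*}}(\pmb{\rho^*})=J(\pmb{\pi^*},\pmb{\mu^*})$, which is \eqref{eq:first}.
\end{enumerate}

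The main obstacle is bookkeeping rather than analysis, and it has two parts. First, \eqref{J} requires that the expectation under $\pmb\pi$ with frozen $\pmb{\mu^*}$ equal $\sum_h\sum_{s,a}r(s,a,\mu^*_h)\rho^{\pmb\pi}_h(s,a)$. This holds because under Assumption \ref{ass:2} the law of $(x_h,a_h)$ is $\rho^{\pmb\pi}_h$, regardless of the exogenous flow. Second, we must confirm that $\pmb{\mu^*}$ coincides with the induced flow $\pmb\mu^{\pmb{\rho^*}}$. If it did not, $F(\pmb{\rho^*})$ would be evaluated at the wrong mean field. This identification comes from $\sum_a\pi^*_h(a\mid s)=1$, and that is where membership $\pmb{\rho^*}\in\mathcal K$, equivalently consistency, is used. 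States with $\mu^*_h(s)=0$ contribute nothing to either side, so the non-uniqueness of the disintegrated policy is harmless.
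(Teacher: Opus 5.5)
Your proof is correct and follows essentially the same route as the paper's. Both directions reduce the best-response condition \eqref{eq:first} to maximizing the linear functional $L_{\pmb{\mu^*}}$ over $\mathcal K$ via Lemmas \ref{lem:a}, \ref{lem:b} and \eqref{J}, and then use $\pmb{\mu^*}=\pmb{\mu}^{\pmb{\rho^*}}$ to rewrite this as \eqref{eq:vi-main}. The only difference is cosmetic: in the converse the paper starts from $\pmb{\rho^*}\in\mathcal K$ and constructs $(\pmb{\pi^*},\pmb{\mu^*})$ by disintegration, whereas you keep the given pair and read consistency directly off the constraints of $\mathcal K$. Your explicit remark that $\pmb{\rho^*}\in\mathcal K$ must be part of the hypothesis there is exactly what the paper assumes implicitly, and it is genuinely needed: if $F$ is extended off $\mathcal K$ by its formula, then for $r\equiv 0$ every pair, consistent or not, satisfies the inequality.
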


\begin{proof}
Suppose first that $(\pmb{\mu^\star},\pmb{\pi^\star})$ is an MFE, and let
$\pmb{\rho^\star}:=\pmb{\rho^{\pi^\star}}$ be the occupation flow induced by $\pmb{\pi^\star}$.
Since $(\pmb{\mu^\star},\pmb{\pi^\star})$ is an MFE, $\pmb{\pi^\star}$ is a
best response to the exogenous mean-field flow $\pmb{\mu^\star}$, that is,
\[
J(\pmb{\mu^\star},\pmb{\pi})
\le
J(\pmb{\mu^\star},\pmb{\pi^\star})
\qquad \forall \pmb{\pi}.
\]
Let $\pmb{\rho}\in\mathcal K$ be arbitrary. By Lemma~\ref{lem:b}, there exists a policy flow
$\pmb{\pi^\rho}$ whose occupation flow is exactly $\pmb{\rho}$. By Lemma~\ref{lem:6},
\[
J(\pmb{\mu^\star},\pmb{\pi^\rho})
=
L_{\pmb{\mu^\star}}(\pmb{\rho}),
\qquad
J(\pmb{\mu^\star},\pmb{\pi^\star})
=
L_{\pmb{\mu^\star}}(\pmb{\rho^\star}),
\]
where
\[
L_{\pmb{\mu^\star}}(\pmb{\rho})
:=
\sum_{h=0}^{H-1}\sum_{s\in S}\sum_{a\in A}
\rho_h(s,a)\,r(s,a,\mu_h^\star).
\]
Since $(\pmb{\mu^\star},\pmb{\pi^\star})$ is an MFE,
\[
L_{\pmb{\mu^\star}}(\pmb{\rho})
=
J(\pmb{\mu^\star},\pmb{\pi^\rho})
\le
J(\pmb{\mu^\star},\pmb{\pi^\star})
=
L_{\pmb{\mu^\star}}(\pmb{\rho^\star})
\qquad \forall \pmb{\rho}\in\mathcal K.
\]
Equivalently,
\[
\sum_{h=0}^{H-1}\sum_{s\in S}\sum_{a\in A}
(\rho_h(s,a)-\rho_h^\star(s,a))\,r(s,a,\mu_h^\star)\le 0
\qquad \forall \pmb{\rho}\in\mathcal K.
\]
Since $(\pmb{\mu^\star},\pmb{\pi^\star})$ is an MFE, the consistency condition
gives
\[
\pmb{\mu^\star}=\pmb{\mu^{\pi^\star}}.
\]
Moreover, since $\pmb{\rho^\star}$ is the occupation flow of $\pmb{\pi^\star}$, we also have
\[
\pmb{\mu^{\pi^\star}}=\pmb{\mu^{\rho^\star}}.
\]
Hence
\[
\pmb{\mu^\star}=\pmb{\mu^{\rho^\star}},
\]
and thus the previous inequality becomes
\[
\sum_{h=0}^{H-1}\sum_{s\in S}\sum_{a\in A}
(\rho_h(s,a)-\rho_h^\star(s,a))\,r(s,a,\mu_h^{\pmb{\rho^\star}})\le 0
\qquad \forall \pmb{\rho}\in\mathcal K.
\]
Rearranging,
\[
\sum_{h=0}^{H-1}\sum_{s\in S}\sum_{a\in A}
\bigl[-r(s,a,\mu_h^{\pmb{\rho^\star}})\bigr]
(\rho_h(s,a)-\rho_h^\star(s,a))
\ge 0
\qquad \forall \pmb{\rho}\in\mathcal K.
\]
By the definition of $F$, this is exactly
\[
\langle F(\pmb{\rho^\star}),\pmb{\rho}-\pmb{\rho^\star}\rangle \ge 0
\qquad \forall \pmb{\rho}\in\mathcal K.
\]
Hence $\pmb{\rho^\star}$ solves the variational inequality \eqref{eq:vi-main}.

Conversely, suppose that $\pmb{\rho^\star}\in\mathcal K$ solves the variational inequality
\[
\langle F(\pmb{\rho^\star}),\pmb{\rho}-\pmb{\rho^\star}\rangle \ge 0
\qquad \forall \pmb{\rho}\in\mathcal K.
\]
By the definition of $F$, this means that
\[
\sum_{h=0}^{H-1}\sum_{s\in S}\sum_{a\in A}
(\rho_h(s,a)-\rho_h^\star(s,a))\,r(s,a,\mu_h^{\pmb{\rho^\star}})\le 0
\qquad \forall \pmb{\rho}\in\mathcal K.
\]
Define
\[
\pmb{\mu^\star}:=\pmb{\mu^{\rho^\star}},
\qquad
\pmb{\pi^\star}:=\pmb{\pi^{\rho^\star}},
\]
where $\pmb{\pi^{\rho^\star}}$ is given by Lemma~\ref{lem:b} (the exact choice does not matter). Then
\[
\sum_{h=0}^{H-1}\sum_{s\in S}\sum_{a\in A}
(\rho_h(s,a)-\rho_h^\star(s,a))\,r(s,a,\mu_h^\star)\le 0
\qquad \forall \pmb{\rho}\in\mathcal K,
\]
that is,
\[
L_{\pmb{\mu^\star}}(\pmb{\rho})
\le
L_{\pmb{\mu^\star}}(\pmb{\rho^\star})
\qquad \forall \pmb{\rho}\in\mathcal K.
\]
Now let $\pmb{\pi}$ be any policy flow, and let $\pmb{\rho^\pi}$ be a corresponding occupation flow.
By Lemma~\ref{lem:a}, we have $\pmb{\rho^\pi}\in\mathcal K$. Therefore,
\[
L_{\pmb{\mu^\star}}(\pmb{\rho^\pi})
\le
L_{\pmb{\mu^\star}}(\pmb{\rho^\star}).
\]
Using Lemma~\ref{lem:6} again,
\[
J(\pmb{\mu^\star},\pmb{\pi})
=
L_{\pmb{\mu^\star}}(\pmb{\rho^\pi})
\le
L_{\pmb{\mu^\star}}(\pmb{\rho^\star})
=
J(\pmb{\mu^\star},\pmb{\pi^\star}).
\]
Hence $\pmb{\pi^\star}$ is a best response to $\pmb{\mu^\star}$.

Finally, by construction,
\[
\pmb{\mu^\star}=\pmb{\mu^{\rho^\star}}
\qquad\text{and}\qquad
\pmb{\rho^\star}=\pmb{\rho^{\pi^\star}},
\]
and thus
\[
\pmb{\mu^\star}=\pmb{\mu^{\pi^\star}}.
\]
Thus the consistency condition also holds, and therefore
$(\pmb{\pi^\star},\pmb{\mu^\star})$ is a mean-field equilibrium.
\end{proof}
\begin{lemma}
F is monotone on \(\mathcal K\).
\end{lemma}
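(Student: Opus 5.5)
To prove the lemma, I will reduce monotonicity of $F$ on $\mathcal K$ directly to Assumption \ref{weakmon}, using Lemma \ref{lem:b} to pass from occupation flows to policy flows. Fix $\pmb\rho,\pmb{\tilde\rho}\in\mathcal K$. By Lemma \ref{lem:b} there are policy flows $\pmb{\pi^\rho}$ and $\pmb{\pi^{\tilde\rho}}$ whose induced state flows are $\pmb{\mu^\rho}$ and $\pmb{\mu^{\tilde\rho}}$, and whose occupation flows are exactly $\pmb\rho$ and $\pmb{\tilde\rho}$. Thus $\rho_h(s,a)=\pi^{\pmb\rho}_h(a\mid s)\mu_h^{\pmb\rho}(s)$, and likewise for the tilde quantities. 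These are precisely the objects $\rho_h^{\pmb\pi}$, $\mu_h^{\pmb\pi}$ appearing in \eqref{mon}.

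Next I expand the inner product using the definition of $F$:
\[
\langle F(\pmb\rho)-F(\pmb{\tilde\rho}),\pmb\rho-\pmb{\tilde\rho}\rangle
=-\sum_{h=0}^{H-1}\sum_{s\in S}\sum_{a\in A}\bigl(r(s,a,\mu_h^{\pmb\rho})-r(s,a,\mu_h^{\pmb{\tilde\rho}})\bigr)\bigl(\rho_h(s,a)-\tilde\rho_h(s,a)\bigr).
\]
Substituting the identifications from the first step, the double sum is exactly the left-hand side of \eqref{mon} for the pair $\pmb\pi=\pmb{\pi^\rho}$, $\pmb{\tilde\pi}=\pmb{\pi^{\tilde\rho}}$. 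It is therefore $\le 0$, and its negative is $\ge0$, which is the claim.

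The only delicate point, and the main obstacle, is the non-uniqueness of the policy recovered on states with $\mu_h^{\pmb\rho}(s)=0$. Assumption \ref{weakmon} must be applied to a genuine pair of policy flows whose state flows are the $\pmb\mu^{\pmb\pi}$ generated through the dynamics. Lemma \ref{lem:b} guarantees this for any choice of the arbitrary component, since the induced state flow and occupation flow do not depend on that choice, and the products $\pi_h(a\mid s)\mu_h(s)$ vanish there anyway. So any fixed choice works and the identification is consistent. Assumption \ref{ass:2} enters only through Lemma \ref{lem:b}, where the dynamics must not depend on $\mu$ for $\mathcal K$ to coincide with the set of realizable occupation flows. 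Assumption \ref{lip} is not needed for this lemma.
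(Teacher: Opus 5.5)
Your proposal is correct and follows essentially the same route as the paper. You expand the inner product via the definition of $F$, use Lemma~\ref{lem:b} to realize $\pmb\rho$ and $\pmb{\tilde\rho}$ as occupation flows of policy flows with induced state flows $\pmb{\mu^\rho}$ and $\pmb{\mu^{\tilde\rho}}$, and then apply Assumption~\ref{weakmon}. Your remark that the arbitrary choice on zero-mass states does not matter is correct, and the paper leaves it implicit.
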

\begin{proof}
Take any $\pmb{\rho},\pmb{\tilde\rho}\in\mathcal K$. By the definition of $F$,
\begin{align*}
\langle F(\pmb{\rho})-F(\pmb{\tilde\rho}),\pmb{\rho}-\pmb{\tilde\rho}\rangle
&=
\sum_{h=0}^{H-1}\sum_{s\in S}\sum_{a\in A}
\Bigl(
-r(s,a,\mu_h^{\pmb{\rho}})
+r(s,a,\mu_h^{\pmb{\tilde\rho}})
\Bigr)
\bigl(\rho_h(s,a)-\tilde\rho_h(s,a)\bigr) \\
&=
-
\sum_{h=0}^{H-1}\sum_{s\in S}\sum_{a\in A}
\Bigl(
r(s,a,\mu_h^{\pmb{\rho}})
-r(s,a,\mu_h^{\pmb{\tilde\rho}})
\Bigr)
\bigl(\rho_h(s,a)-\tilde\rho_h(s,a)\bigr).
\end{align*}

By Lemma~\ref{lem:b}, there exist policy flows
$\pmb{\pi}^{\pmb{\rho}}$ and $\pmb{\pi}^{\pmb{\tilde\rho}}$
whose occupation flows are exactly $\pmb{\rho}$ and $\pmb{\tilde\rho}$, respectively.
Moreover, by construction, the induced state flows of these policies are
$\pmb{\mu}^{\pmb{\rho}}$ and $\pmb{\mu}^{\pmb{\tilde\rho}}$.

Applying Assumption~\ref{weakmon} to the two policy flows
$\pmb{\pi}^{\pmb{\rho}}$ and $\pmb{\pi}^{\pmb{\tilde\rho}}$, we obtain
\[
\sum_{h=0}^{H-1}\sum_{s\in S}\sum_{a\in A}
\Bigl(
r(s,a,\mu_h^{\pmb{\rho}})
-r(s,a,\mu_h^{\pmb{\tilde\rho}})
\Bigr)
\bigl(\rho_h(s,a)-\tilde\rho_h(s,a)\bigr)
\le 0.
\]
Therefore,
\[
\langle F(\pmb{\rho})-F(\pmb{\tilde\rho}),\pmb{\rho}-\pmb{\tilde\rho}\rangle
\ge 0.
\]
Hence $F$ is monotone on $\mathcal K$.
\end{proof}
\begin{lemma}
$F$ is Lipschitz continuous.
\end{lemma}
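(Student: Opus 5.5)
The plan is to prove the bound of Lemma \ref{F-Lip} directly from Assumption \ref{lip}, using only elementary comparisons between the total variation norm and Euclidean norms. Fix $\pmb\rho,\pmb{\tilde\rho}\in\mathcal K$ with induced state flows $\pmb\mu^{\pmb\rho}$ and $\pmb\mu^{\pmb{\tilde\rho}}$. By the definition of $F$ and Assumption \ref{lip}, each coordinate satisfies
\[
\bigl|[F(\pmb\rho)]_{h,s,a}-[F(\pmb{\tilde\rho})]_{h,s,a}\bigr|
=\bigl|r(s,a,\mu_h^{\pmb\rho})-r(s,a,\mu_h^{\pmb{\tilde\rho}})\bigr|
\le L_r\,\|\mu_h^{\pmb\rho}-\mu_h^{\pmb{\tilde\rho}}\|_{\mathrm{TV}}.
\]
The bound on the right is uniform in $(s,a)$. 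Squaring and summing over $s,a$ therefore gives
\[
\|F(\pmb\rho)-F(\pmb{\tilde\rho})\|_2^2\le |S||A|\,L_r^2\sum_{h}\|\mu_h^{\pmb\rho}-\mu_h^{\pmb{\tilde\rho}}\|_{\mathrm{TV}}^2 .
\]

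Next, bound the total variation distance of the state marginals by the stagewise occupation difference. I take $\|\cdot\|_{\mathrm{TV}}$ to be at most the $\ell_1$ norm; if the paper's convention is half the $\ell_1$ norm, the estimate only improves. Using $\mu_h^{\pmb\rho}(s)=\sum_a\rho_h(s,a)$ and the triangle inequality,
\[
\|\mu_h^{\pmb\rho}-\mu_h^{\pmb{\tilde\rho}}\|_1\le \sum_{s,a}|\rho_h(s,a)-\tilde\rho_h(s,a)| .
\]
Cauchy--Schwarz then gives
\[
\sum_{s,a}|\rho_h(s,a)-\tilde\rho_h(s,a)|\le \sqrt{|S||A|}\,\|\rho_h-\tilde\rho_h\|_2 .
\]
Hence $\|\mu_h^{\pmb\rho}-\mu_h^{\pmb{\tilde\rho}}\|_{\mathrm{TV}}^2\le |S||A|\,\|\rho_h-\tilde\rho_h\|_2^2$.

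Substituting this into the first display yields
\[
\|F(\pmb\rho)-F(\pmb{\tilde\rho})\|_2^2\le (|S||A|)^2L_r^2\sum_h\|\rho_h-\tilde\rho_h\|_2^2=(|S||A|L_r)^2\,\|\pmb\rho-\pmb{\tilde\rho}\|_2^2 .
\]
Taking square roots gives the claimed constant $|S||A|L_r$. Two features make this work. The sum over $h$ is handled stagewise, because $F_h$ depends only on $\rho_h$, so no factor of $H$ appears. Neither Assumption \ref{weakmon} nor Assumption \ref{ass:2} is needed beyond ensuring that the $\mu_h^{\pmb\rho}$ are probability measures on which $r$ is defined; this holds since each $\rho_h$ has mass $1$, as shown in Lemma \ref{lem:6}.

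The only delicate point is the normalization convention for $\|\cdot\|_{\mathrm{TV}}$. I would state explicitly that $\|\nu\|_{\mathrm{TV}}\le\|\nu\|_1$ for signed measures on finite $S$, so the argument holds under either convention. The remaining steps are routine.
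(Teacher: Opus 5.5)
Your proof is correct and follows the same route as the paper. Both arguments bound each coordinate via Assumption \ref{lip}, sum the squares over $(s,a)$, bound $\|\mu_h^{\pmb\rho}-\mu_h^{\pmb{\tilde\rho}}\|_1$ by $\|\rho_h-\tilde\rho_h\|_1$, convert to $\ell_2$, and sum stagewise over $h$. Your bookkeeping is actually cleaner: the correct factors are $|S||A|$ from summing and $\sqrt{|S||A|}$ from Cauchy--Schwarz, whereas the paper writes the looser factors $|S|^2|A|^2$ and $|S||A|$, which do not literally compose to the stated constant; you also handle the TV-versus-$\ell_1$ convention explicitly, which the paper skips.
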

\begin{proof}
Let $N:=$.
For any given $h,s,a$, Assumption \ref{lip} implies
\[
|F_{h,s,a}(\pmb \rho)-F_{h,s,a}(\pmb {\tilde\rho})|
=
|r(s,a,\mu_h^{\pmb \rho})-r(s,a,\mu_h^{\pmb {\tilde\rho}})|
\le
L_r\|\mu_h^{\pmb \rho}-\mu_h^{\pmb{\tilde\rho}}\|_1.
\]
Hence
\[
\sum_{s\in S}\sum_{a\in A}|F_{h,s,a}(\pmb \rho)-F_{h,s,a}(\pmb{\tilde\rho})|^2
\le
|S|^2|A|^2 L_r^2 \|\mu_h^{\pmb \rho}-\mu_h^{\pmb{\tilde\rho}}\|_1^2.
\]
Note that
\begin{align*}
\|\mu_h^{\pmb \rho}-\mu_h^{\pmb{\tilde\rho}}\|_1
&=
\sum_{s\in S}\left|\sum_{a\in A}\bigl(\rho_h(s,a)-\tilde\rho_h(s,a)\bigr)\right|\\
&\le
\sum_{s\in S}\sum_{a\in A}|\rho_h(s,a)-\tilde\rho_h(s,a)|\\
&=
\|\rho_h-\tilde\rho_h\|_1
\\&\le
|S|\,|A|\,\|\rho_h-\tilde\rho_h\|_2.
\end{align*}
Therefore
\[
\sum_{s\in S}\sum_{a\in A}|F_{h,s,a}(\pmb \rho)-F_{h,s,a}(\pmb{\tilde\rho})|^2
\le
|S|^2\,|A|^2 \,L_r^2 \|\rho_h-\tilde\rho_h\|_2^2.
\]
Summing over $h=0,\dots,H-1$, we obtain
\[
\|F(\pmb \rho)-F(\pmb{\tilde\rho})\|_2^2
\le
|S|^2|A|^2 L_r^2 \|\pmb \rho-\pmb{\tilde\rho}\|_2^2.
\]
Taking square roots yields
\[
\|F(\pmb \rho)-F(\pmb{\tilde\rho})\|_2
\le
|S||A|\,L_r\,\|\pmb\rho-\pmb{\tilde\rho}\|_2.
\]
Thus, $F$ is a Lipschitz continuous functional, as desired.
\end{proof}
\begin{lemma}
\eqref{eq:vi-main} and monotone inclusion are equivalent.
\end{lemma}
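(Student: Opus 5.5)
The claim is that, for $\pmb{\rho^\star}\in\mathcal K$, the variational inequality \eqref{eq:vi-main} holds if and only if $\mathbf{0}\in F(\pmb{\rho^\star})+N_{\mathcal K}(\pmb{\rho^\star})$. The approach is to unfold the definition of the normal cone from Remark \ref{rem:1} and observe that both conditions are the same statement up to a sign. No monotonicity or Lipschitz property of $F$ is needed; only the facts that $\mathcal K$ is nonempty and convex (Lemma \ref{lem:1}) and that $F$ is single-valued on $\mathcal K$.

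\emph{Forward direction.} Assume \eqref{eq:vi-main} holds at $\pmb{\rho^\star}\in\mathcal K$ and set $\pmb g:=-F(\pmb{\rho^\star})$. For every $\pmb\rho\in\mathcal K$,
\[
\langle \pmb g,\pmb\rho-\pmb{\rho^\star}\rangle=-\langle F(\pmb{\rho^\star}),\pmb\rho-\pmb{\rho^\star}\rangle\le 0,
\]
so $\pmb g\in N_{\mathcal K}(\pmb{\rho^\star})$ by definition. It follows that $\mathbf 0=F(\pmb{\rho^\star})+\pmb g\in F(\pmb{\rho^\star})+N_{\mathcal K}(\pmb{\rho^\star})$.

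\emph{Converse direction.} Suppose $\mathbf 0\in F(\pmb{\rho^\star})+N_{\mathcal K}(\pmb{\rho^\star})$.
\begin{enumerate}
\item The set $N_{\mathcal K}(\pmb{\rho^\star})$ is nonempty, and the normal cone is empty off $\mathcal K$, so $\pmb{\rho^\star}\in\mathcal K$.
\item There is some $\pmb c\in N_{\mathcal K}(\pmb{\rho^\star})$ with $\pmb c=-F(\pmb{\rho^\star})$.
\item The normal-cone inequality $\langle\pmb c,\pmb\rho-\pmb{\rho^\star}\rangle\le0$ for all $\pmb\rho\in\mathcal K$ becomes, after substituting $\pmb c=-F(\pmb{\rho^\star})$, exactly \eqref{eq:vi-main}.
\end{enumerate}

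The only point needing care is domain bookkeeping. $F$ is defined only on $\mathcal K$, and the inclusion is meaningful only there; this is handled by the emptiness of $N_{\mathcal K}$ outside $\mathcal K$, as noted in step 1. There is no substantive obstacle beyond this. Combining the lemma with Theorem \ref{thrm:main} then immediately yields Corollary \ref{cor:1}.
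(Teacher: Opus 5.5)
Your proposal is correct and follows the same route as the paper: both unfold the definition of $N_{\mathcal K}$ to see that \eqref{eq:vi-main} says exactly $-F(\pmb{\rho^\star})\in N_{\mathcal K}(\pmb{\rho^\star})$, i.e., $\mathbf 0\in F(\pmb{\rho^\star})+N_{\mathcal K}(\pmb{\rho^\star})$. Your two directions and the domain bookkeeping (emptiness of $N_{\mathcal K}$ off $\mathcal K$) make the paper's one-line chain of equivalences more explicit, and the argument does not actually use convexity or nonemptiness of $\mathcal K$.
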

\begin{proof}
Recall that for a closed convex set $\mathcal{K}$, the normal cone at $\pmb{\rho^\star}\in \mathcal{K}$ is
\[
N_{\mathcal{K}}(\pmb{\rho^\star}):=
\left\{
\pmb g\in \mathbb{R}^{H|S||A|}:
\langle \pmb g,\pmb \rho-\pmb \rho^\star\rangle \le 0\quad \forall \pmb \rho\in \mathcal{K}
\right\}.
\]
Thus,
\[
\langle F(\pmb{\rho^\star}),\pmb \rho-\pmb{\rho^\star}\rangle \ge 0
\quad \forall \pmb \rho\in \mathcal{K}
\]
is equivalent to
\[
-F(\pmb{\rho^\star})\in N_{\mathcal{K}}(\pmb{\rho^\star}),
\]
which is equivalent to
\[
\pmb 0\in F(\pmb{\rho^\star})+N_{\mathcal{K}}(\pmb{\rho^\star}).
\]
This proves \eqref{eq:mi-main} and completes the proof.
\end{proof}

\begin{corollary}
Under the assumptions of the theorem, the finite-horizon MFE problem can be solved in the occupation
variable by algorithms for monotone inclusions. In particular, since $F$ is monotone and Lipschitz and
$N_{\mathcal{K}}$ is maximal monotone, the proximal anchored gradient method applies to
\[
\pmb 0\in F(\pmb \rho)+N_{\mathcal{K}}(\pmb \rho).
\]
\end{corollary}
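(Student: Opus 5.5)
The statement is the final corollary: the MFE problem is solved in the occupation variable by monotone-inclusion algorithms, and in particular the proximal anchored gradient method applies to \(\mathbf 0\in F(\pmb\rho)+N_{\mathcal K}(\pmb\rho)\). The plan is to assemble, in order, the hypotheses of Theorem \ref{thrm:cc}, and to show that its conclusion is meaningful for the MFE problem.

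\textbf{Step 1 (reduction).} By Theorem \ref{thrm:main} together with the last appendix lemma (the variational inequality \eqref{eq:vi-main} is equivalent to \(-F(\pmb{\rho^\star})\in N_{\mathcal K}(\pmb{\rho^\star})\)), Corollary \ref{cor:1} holds. So MFEs correspond exactly to zeros of \(F+N_{\mathcal K}\) on \(\mathcal K\), via the recovery map \eqref{eq:policy-recovery}. Lemmas \ref{lem:a} and \ref{lem:b} guarantee that nothing is lost in passing between policies and occupation flows.

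\textbf{Step 2 (structural hypotheses).}
\begin{enumerate}
\item Lemma \ref{lem:1}/Lemma \ref{lem:6} give that \(\mathcal K\) is a nonempty compact convex polytope under Assumption \ref{ass:2}. Remark \ref{rem:1} then makes \(N_{\mathcal K}\) maximal monotone, and its resolvent is \(J_{\alpha N_{\mathcal K}}=\Pi_{\mathcal K}\) for every \(\alpha>0\).
\item Lemma \ref{F-mon} gives monotonicity of \(F\) on \(\mathcal K\).
\item Lemma \ref{F-Lip} gives Lipschitz continuity with constant \(|S||A|L_r\).
\item Existence of a zero follows from \cite[Proposition 1]{perolat2021scaling} under Assumption \ref{weakmon}, combined with Step 1.
\end{enumerate}

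\textbf{Step 3 (main obstacle).} Theorem \ref{thrm:cc} is stated for \(G\) defined, monotone and Lipschitz on all of \(\mathbb R^d\), whereas \(F\) is only defined on \(\mathcal K\). I would handle this in one of two ways.
\begin{itemize}
\item Observe that every iterate of \eqref{eq:anchored-update} is a projection onto \(\mathcal K\), so \(F\) is only ever evaluated on \(\mathcal K\), and the proof in \cite{cai2026last} only invokes monotonicity and Lipschitz bounds between iterates and \(\pmb z^\star\). This is the route I would try first.
\item Alternatively, extend \(F\) to \(\tilde F:=F\circ\Pi_{\mathcal K}\), which is Lipschitz with the same constant. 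Monotonicity of \(\tilde F\) off \(\mathcal K\) is not automatic, so I would argue that the operator \(\tilde F+N_{\mathcal K}\) has domain \(\mathcal K\), where \(\tilde F=F\), and is therefore monotone there. Pairing \(\tilde F\) with \(N_{\mathcal K}\) suffices for the analysis because \(\operatorname{dom}N_{\mathcal K}=\mathcal K\).
\end{itemize}

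\textbf{Step 4 (conclusion).} Once the hypotheses are in place, Theorem \ref{thrm:conv} and Lemma \ref{lem:cc} show that the tangential residual vanishes along the iterates, and that accumulation points solve the inclusion. Step 1 then converts these accumulation points back to MFEs, which completes the corollary.
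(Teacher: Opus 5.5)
Your proposal is correct and follows the paper's own reasoning. There the corollary is simply the assembly of Lemmas \ref{F-mon} and \ref{F-Lip}, Lemma \ref{lem:6} with Remark \ref{rem:1}, and the equivalence of \eqref{eq:vi-main} with the inclusion. The domain issue is handled exactly as in your first route of Step 3: the proof of Theorem \ref{thrm:conv} notes that the iterates stay in $\mathcal K$. Your second route does not really bypass anything, because Theorem \ref{thrm:cc} as stated requires monotonicity of $G$ itself rather than of $G+N_{\mathcal K}$. So you would still have to inspect its proof, just as in the first route.
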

\section{Further Details of Section \ref{sec:anchored_method}}

\begin{lemma}
   The graph of $N_{\mathcal K}$ is closed.
\end{lemma}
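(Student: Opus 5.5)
The plan is to show closedness of $\operatorname{gra}(N_{\mathcal K})$ directly from the definition of the normal cone, using only that $\mathcal K$ is closed (Lemma \ref{lem:6}). Take a sequence $(\pmb x_n,\pmb g_n)\in\operatorname{gra}(N_{\mathcal K})$ with $(\pmb x_n,\pmb g_n)\to(\pmb x,\pmb g)$ in $\mathbb R^{H|S||A|}\times\mathbb R^{H|S||A|}$. We must show $(\pmb x,\pmb g)\in\operatorname{gra}(N_{\mathcal K})$, i.e.\ $\pmb x\in\mathcal K$ and $\langle \pmb g,\pmb y-\pmb x\rangle\le 0$ for all $\pmb y\in\mathcal K$.

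\emph{Step 1 (feasibility of the limit).} Since $N_{\mathcal K}(\pmb x_n)\neq\varnothing$, the definition in Remark \ref{rem:1} forces $\pmb x_n\in\mathcal K$ for every $n$. By Lemma \ref{lem:6}, $\mathcal K$ is compact, hence closed, so $\pmb x=\lim_n\pmb x_n\in\mathcal K$.

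\emph{Step 2 (passing to the limit in the defining inequality).} Fix any $\pmb y\in\mathcal K$. For each $n$ we have $\langle \pmb g_n,\pmb y-\pmb x_n\rangle\le 0$. The map $(\pmb u,\pmb v)\mapsto\langle \pmb u,\pmb y-\pmb v\rangle$ is continuous (the inner product is jointly continuous in finite dimensions). Letting $n\to\infty$ therefore gives $\langle \pmb g,\pmb y-\pmb x\rangle\le 0$. Since $\pmb y$ was arbitrary, $\pmb g\in N_{\mathcal K}(\pmb x)$, which proves the lemma.

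An alternative route is to note that $N_{\mathcal K}$ is maximal monotone (Remark \ref{rem:1}) and invoke the standard fact that maximal monotone operators have closed graphs. The direct argument above is more self-contained, though. There is no real obstacle; the only point requiring care is Step 1, where we must use that membership in the graph forces $\pmb x_n\in\mathcal K$, since the normal cone is empty off $\mathcal K$, and then use closedness of $\mathcal K$.
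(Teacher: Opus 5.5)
Your proof is correct, and it takes a more elementary route than the paper. The paper does not argue from the definition. It identifies $N_{\mathcal K}=\partial I_{\mathcal K}$ (Remark \ref{rem:1}), notes that $I_{\mathcal K}$ is a closed proper convex function because $\mathcal K$ is compact and convex, and cites \cite[Theorem~24.4]{rockafellar1970convex}, which says the subdifferential of a closed proper convex function has a closed graph.

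Your argument is exactly that theorem specialized to an indicator function:
\begin{itemize}
\item lower semicontinuity of $I_{\mathcal K}$ is closedness of $\mathcal K$, which is your Step 1;
\item the subgradient inequality is the normal-cone inequality passed to the limit, which is your Step 2.
\end{itemize}

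What your route buys is self-containedness and slightly more generality. It uses neither convexity nor boundedness of $\mathcal K$, only closedness. The cited route needs $\mathcal K$ convex so that $I_{\mathcal K}$ is convex and the theorem applies. In exchange, the paper's route is a one-line appeal to a standard result and fits the maximal-monotone framework already set up in Remark \ref{rem:1}.

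Your alternative, that maximal monotone operators have closed graphs, is also valid and is closest in spirit to how the paper uses Remark \ref{rem:1}.
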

\begin{proof}
    By Lemma \ref{lem:1}, $\mathcal K$ is compact and convex. It follows that the graph of $N_{\mathcal K}$ is closed by \cite[Theorem~24.4]{rockafellar1970convex} and Remark \ref{rem:1}.
\end{proof}

In what follows, we focus on the proof of Lemma \ref{lem:cc}.

\begin{proof}[Proof of Lemma \ref{lem:cc}]
    It remains to verify that every accumulation point of the sequence
\((\pmb{\rho^T})_T\) induces an MFE. The only missing detail from Section \ref{sec:anchored_method} is that any accumulation point $\pmb{\rho^*}$ of $(\pmb{\rho^T})_T$ satisfies 
\[
r^{\mathrm{tan}}_{F,N_{\mathcal K}}(\pmb{\rho^*})=0.
\]
Let \((\pmb{\rho^{T_n}})_n\) be a convergent subsequence such that
\[
\pmb{\rho^{T_n}}\to \pmb{\rho^*} .
\]
Since \(\mathcal K\) is compact, it is closed. Hence
\(\pmb{\rho^*}\in \mathcal K\).

By Theorem \ref{thrm:conv} we have
\[
r^{\mathrm{tan}}_{F,N_{\mathcal K}}(\pmb{\rho^{T_n}})\to 0 .
\]
By the definition of the tangent residual, for each sufficiently large \(n\) there exists
\[
\pmb {c_n}\in N_{\mathcal K}(\pmb{\rho^{T_n}})
\]
such that
\[
\|F(\pmb{\rho^{T_n}})+\pmb {c_n}\|_2
\le
r^{\mathrm{tan}}_{F,N_{\mathcal K}}(\pmb{\rho^{T_n}})
+\frac{1}{n}.
\]
Therefore,
\[
F(\pmb{\rho^{T_n}})+\pmb {c_n} \to \mathbf 0 .
\]
Equivalently,
\[
\pmb {c_n} = -F(\pmb{\rho^{T_n}}) + o(1).
\]
Since \(F\) is continuous and \(\pmb{\rho^{T_n}}\to \pmb{\rho^*}\), the sequence
\((F(\pmb{\rho^{T_n}}))_n\) is bounded. Hence \((\pmb {c_n})_n\) is also bounded.
Passing to a further subsequence if necessary, we may assume that
\[
\pmb {c_n} \to \pmb {c^*}
\]
for some vector \(\pmb {c^*}\).

The normal cone mapping \(N_{\mathcal K}\) has a closed graph because
\(\mathcal K\) is closed and convex. Since
\[
\pmb{\rho^{T_n}}\to \pmb{\rho^*},
\qquad
\pmb {c_n}\to \pmb {c^*},
\qquad
\pmb {c_n}\in N_{\mathcal K}(\pmb{\rho^{T_n}}),
\]
we obtain
\[
\pmb {c^*}\in N_{\mathcal K}(\pmb{\rho}^*).
\]
Moreover, by continuity of \(F\),
\[
F(\pmb{\rho^{T_n}})\to F(\pmb{\rho^*}).
\]
Taking limits in
\[
F(\pmb{\rho^{T_n}})+\pmb {c_n} \to \mathbf 0
\]
gives
\[
F(\pmb{\rho^*})+\pmb {c^*}=\mathbf 0.
\]
Since \(\pmb {c^*}\in N_{\mathcal K}(\pmb{\rho}^*)\), it follows that
\[
\mathbf 0\in F(\pmb{\rho^*})+N_{\mathcal K}(\pmb{\rho^*}).
\]
Therefore,
\[
r^{\mathrm{tan}}_{F,N_{\mathcal K}}(\pmb{\rho^*})
=
\inf_{\pmb c\in N_{\mathcal K}(\pmb{\rho^*})}
\|F(\pmb{\rho}^*)+\pmb c\|_2
=0,
\]
as desired.
\end{proof}

{
Next, we show that $\pmb{\rho^T} \to \Pi_{S}(\pmb{\rho^0})$ under the Euclidean norm. First, we demonstrate that occupancy measures that induce an MFE form a convex and compact set.}

\begin{lemma}
\label{lem:solution-set-closed-convex}
The following statements hold:
\begin{itemize}
    \item The set of occupation flows that induce an MFE,
    \[
    \mathcal S
    :=
    \left\{
    \pmb{\rho}\in\mathcal K:
    \langle F(\pmb{\rho}),\pmb z-\pmb{\rho}\rangle\ge0
    \quad\forall \pmb z\in\mathcal K
    \right\},
    \]
    is closed and convex.

    \item For every anchor term
    \(\pmb{\rho^0}\in \mathcal K\), there exists a unique
    orthogonal projection
    $\Pi_{\mathcal S}(\pmb{\rho^0})$.
\end{itemize}
\end{lemma}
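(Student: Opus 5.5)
Closedness of $\mathcal S$ follows from continuity, and convexity from the classical Minty characterization of solutions of monotone variational inequalities. The projection claim then follows from standard Hilbert-space facts once $\mathcal S$ is shown to be nonempty.

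\emph{Closedness.} For each fixed $\pmb z\in\mathcal K$, the map $\pmb\rho\mapsto\langle F(\pmb\rho),\pmb z-\pmb\rho\rangle$ is continuous on $\mathcal K$, since $F$ is Lipschitz by Lemma \ref{F-Lip}. Hence each set $\{\pmb\rho\in\mathcal K:\langle F(\pmb\rho),\pmb z-\pmb\rho\rangle\ge0\}$ is closed, being the preimage of $[0,\infty)$ intersected with the compact set $\mathcal K$ of Lemma \ref{lem:1}. The set $\mathcal S$ is the intersection of these sets over all $\pmb z\in\mathcal K$, so it is closed. As a closed subset of a compact set, it is also compact.

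\emph{Convexity.} I will show that $\mathcal S$ coincides with the Minty set
\[
\mathcal M:=\{\pmb\rho\in\mathcal K:\langle F(\pmb z),\pmb z-\pmb\rho\rangle\ge0\ \ \forall\pmb z\in\mathcal K\}.
\]
This set is an intersection of closed half-spaces with the convex set $\mathcal K$, hence convex.

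\begin{itemize}
\item The inclusion $\mathcal S\subseteq\mathcal M$ uses monotonicity of $F$ on $\mathcal K$ (Lemma \ref{F-mon}). For $\pmb\rho\in\mathcal S$ and any $\pmb z\in\mathcal K$,
\[
\langle F(\pmb z),\pmb z-\pmb\rho\rangle\ge\langle F(\pmb\rho),\pmb z-\pmb\rho\rangle\ge0 .
\]
\item For the inclusion $\mathcal M\subseteq\mathcal S$, take $\pmb\rho\in\mathcal M$ and $\pmb z\in\mathcal K$, and set $\pmb z_t=\pmb\rho+t(\pmb z-\pmb\rho)$ for $t\in(0,1]$. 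By convexity, $\pmb z_t\in\mathcal K$. Applying the Minty inequality at $\pmb z_t$ gives $t\langle F(\pmb z_t),\pmb z-\pmb\rho\rangle\ge0$. Dividing by $t$ and letting $t\downarrow0$, continuity of $F$ yields $\langle F(\pmb\rho),\pmb z-\pmb\rho\rangle\ge0$.
\end{itemize}

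The one point needing care is that $F$ is monotone and continuous only on $\mathcal K$, not on all of $\mathbb R^{H|S||A|}$. Both directions above only evaluate $F$ at points of $\mathcal K$, thanks to the convexity of $\mathcal K$ from Lemma \ref{lem:1}, which relies on Assumption \ref{ass:2}. So the argument stays entirely inside $\mathcal K$ and this restriction causes no difficulty.

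\emph{Projection.} First, $\mathcal S$ is nonempty. By \cite[Proposition 1]{perolat2021scaling} and Assumption \ref{weakmon} an MFE exists, and by Theorem \ref{thrm:main} its occupation flow lies in $\mathcal S$. Since $\mathcal S$ is then a nonempty, closed, convex subset of the finite-dimensional Euclidean space $\mathbb R^{H|S||A|}$, the Hilbert projection theorem gives, for every $\pmb{\rho^0}$, a unique minimizer of $\|\pmb y-\pmb{\rho^0}\|_2$ over $\pmb y\in\mathcal S$.

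For completeness, the two parts of that argument are as follows. \emph{Existence}: the distance function is continuous and $\mathcal S$ is compact. \emph{Uniqueness}: if $\pmb y_1\ne\pmb y_2$ were both minimizers at distance $d$, their midpoint lies in $\mathcal S$ by convexity, and the parallelogram identity shows it is strictly closer than $d$ to $\pmb{\rho^0}$, a contradiction.

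I expect no real obstacle here. The only subtle step is the Minty equivalence, which requires keeping all evaluations of $F$ inside $\mathcal K$; nonemptiness of $\mathcal S$, which the statement uses implicitly, is supplied by the cited existence result.
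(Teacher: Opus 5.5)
Your proposal is correct and follows essentially the same route as the paper. Both use the Minty reformulation of $\mathcal S$ (monotonicity for one inclusion, the segment $\pmb z_t$ plus continuity of $F$ for the other), obtain nonemptiness from the cited existence result, and conclude with the Hilbert projection theorem. The only cosmetic difference is that you get closedness directly from continuity of $F$ on the original formulation, whereas the paper reads off closedness and convexity together by writing the Minty set as $\mathcal K$ intersected with closed half-spaces.
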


\begin{proof}
Note that by Corollary~\ref{cor:1}, the occupancy measures that induce an MFE belong to $\mathcal S$. Furthermore, every element of $\mathcal S$ induces an MFE too.

To prove that $\mathcal S$ is closed and convex, we provide an equivalent formulation for $\mathcal S$ using monotonicity. We claim that \(\mathcal S\) is equivalent to
\begin{equation}
\label{eq:minty-solution-set}
\overline{\mathcal S}
=
\left\{
\pmb{\rho}\in\mathcal K:
\langle F(\pmb z),\pmb z-\pmb{\rho}\rangle\ge0
\quad
\forall \pmb z\in\mathcal K
\right\}.
\end{equation}

First, suppose that \(\pmb{\rho}\in\mathcal S\). Then, for every
\(\pmb z\in\mathcal K\),
\[
\langle F(\pmb{\rho}),\pmb z-\pmb{\rho}\rangle\ge0.
\]
By monotonicity of \(F\),
\[
\langle F(\pmb z)-F(\pmb{\rho}),
\pmb z-\pmb{\rho}\rangle\ge0.
\]
Adding these inequalities gives
\[
\langle F(\pmb z),\pmb z-\pmb{\rho}\rangle\ge0
\qquad
\forall \pmb z\in\mathcal K.
\]

Conversely, suppose that \(\pmb{\rho}\in\mathcal K\) satisfies
\[
\langle F(\pmb z),\pmb z-\pmb{\rho}\rangle\ge0
\qquad
\forall \pmb z\in\mathcal K.
\]
Fix an arbitrary \(\pmb y\in\mathcal K\), and for \(t\in(0,1]\), define
\[
\pmb z_t
:=
(1-t)\pmb{\rho}+t\pmb y.
\]
Since \(\mathcal K\) is convex, \(\pmb z_t\in\mathcal K\). Therefore,
\[
\langle F(\pmb z_t),\pmb z_t-\pmb{\rho}\rangle\ge0.
\]
Since
\[
\pmb z_t-\pmb{\rho}
=
t(\pmb y-\pmb{\rho}),
\]
we obtain
\[
\langle F(\pmb z_t),\pmb y-\pmb{\rho}\rangle\ge0.
\]
Letting \(t\downarrow0\), continuity of \(F\) yields
\[
\langle F(\pmb{\rho}),\pmb y-\pmb{\rho}\rangle\ge0.
\]
Since \(\pmb y\in\mathcal K\) was arbitrary, it follows that
\(\pmb{\rho}\in\mathcal S\). This proves
\eqref{eq:minty-solution-set}.

For each fixed \(\pmb z\in\mathcal K\), define
\[
H_{\pmb z}
:=
\left\{
\pmb{\rho} \in \mathbb R^{H|S||A|}:
\langle F(\pmb z),\pmb z-\pmb{\rho}\rangle\ge0
\right\}.
\]
The set \(H_{\pmb z}\) is a closed half-space in the variable
\(\pmb{\rho}\), and is therefore closed and convex. By
\eqref{eq:minty-solution-set},
\[
\mathcal S
=
\mathcal K
\cap
\bigcap_{\pmb z\in\mathcal K}H_{\pmb z}.
\]
Since \(\mathcal K\) is closed and convex, and arbitrary intersections
of closed convex sets are closed and convex, it follows that
\(\mathcal S\) is closed and convex.

Now, under monotonicity and Lipschitz continuity, we have \(\mathcal S\neq\varnothing\). Since \(\mathcal S\) is
closed and is contained in the compact set \(\mathcal K\), it is
compact. Thus, existence and uniqueness of
\(\Pi_{\mathcal S}(\pmb{\rho^0})\) follows from the Hilbert projection theorem.
\end{proof}

\begin{theorem}
\label{thrm:anchor-projection-convergence}
Let \((\pmb{\rho^k})_{k\ge0}\) be generated by
\[
\pmb{\rho^{k+1}}
=
\Pi_{\mathcal K}
\left(
(1-\beta_k)\pmb{\rho^k}
+
\beta_k\pmb{\rho^0}
-
\alpha_kF(\pmb{\rho^k})
\right),
\]
where
\[
\alpha_k
=
\frac{1}{L\sqrt{k+\gamma}},
\qquad
\beta_k
=
\frac{\gamma}{k+\gamma},
\qquad
\gamma\ge2.
\]
Then
\[
\pmb{\rho^k}
\longrightarrow
\Pi_{\mathcal S}(\pmb{\rho^0}).
\]
Consequently, the sequence has a unique accumulation point.
\end{theorem}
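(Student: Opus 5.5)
The plan follows the classical Halpern/Tikhonov argument: compare the anchored iterates with an auxiliary regularized path and show that this path converges to the minimal-distance solution $\bar{\pmb\rho}:=\Pi_{\mathcal S}(\pmb{\rho^0})$. Note first that $\beta_k/\alpha_k=\gamma L/\sqrt{k+\gamma}\to0$. For each $\epsilon>0$, let $\pmb{\rho}_\epsilon\in\mathcal K$ be the unique solution of the strongly monotone variational inequality
\[
\mathbf 0\in F(\pmb\rho)+\epsilon(\pmb\rho-\pmb{\rho^0})+N_{\mathcal K}(\pmb\rho).
\]
Existence and uniqueness follow because $F+\epsilon(I-\pmb{\rho^0})$ is $\epsilon$-strongly monotone and continuous on the compact convex set $\mathcal K$ (Lemma~\ref{lem:1}).

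\textbf{Step 1 (regularized path).} Show $\pmb{\rho}_\epsilon\to\bar{\pmb\rho}$ as $\epsilon\downarrow0$. Pair the inclusion for $\pmb\rho_\epsilon$ with any $\pmb s\in\mathcal S$, and use monotonicity (Lemma~\ref{F-mon}) together with $-F(\pmb s)\in N_{\mathcal K}(\pmb s)$. This gives $\langle \pmb\rho_\epsilon-\pmb{\rho^0},\pmb\rho_\epsilon-\pmb s\rangle\le0$, hence $\|\pmb\rho_\epsilon-\pmb{\rho^0}\|\le\|\pmb s-\pmb{\rho^0}\|$. Any cluster point lies in $\mathcal S$: pass to the limit using the closed graph of $N_{\mathcal K}$, as in the proof of Lemma~\ref{lem:cc}. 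It also satisfies the obtuse-angle characterization of the projection onto $\mathcal S$, which is closed and convex by Lemma~\ref{lem:solution-set-closed-convex}. So every cluster point equals $\bar{\pmb\rho}$. The same computation, done for two parameters, gives the Lipschitz-type path estimate $\|\pmb\rho_\epsilon-\pmb\rho_{\epsilon'}\|\le \frac{|\epsilon-\epsilon'|}{\epsilon}D$, where $D$ bounds $\operatorname{diam}\mathcal K$, which is at most $\sqrt{2H}$.

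\textbf{Step 2 (tracking).} Set $\epsilon_k:=\beta_k/\alpha_k$. Note that $\pmb\rho_{\epsilon_k}$ is a fixed point of the map $\pmb\rho\mapsto\Pi_{\mathcal K}((1-\beta_k)\pmb\rho+\beta_k\pmb{\rho^0}-\alpha_kF(\pmb\rho))$. Use nonexpansiveness of $\Pi_{\mathcal K}$, monotonicity, and $L$-Lipschitzness (Lemma~\ref{F-Lip}) to bound $\|\pmb{\rho^{k+1}}-\pmb\rho_{\epsilon_k}\|^2$ by
\[
\bigl((1-\beta_k)^2+\alpha_k^2L^2\bigr)\|\pmb{\rho^k}-\pmb\rho_{\epsilon_k}\|^2 .
\]
Since $\alpha_k^2L^2=1/(k+\gamma)$ and $\beta_k=\gamma/(k+\gamma)$ with $\gamma\ge2$, the contraction factor is at most $1-\beta_k/2$ for large $k$, roughly. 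Combining this with Step 1's path estimate $\|\pmb\rho_{\epsilon_k}-\pmb\rho_{\epsilon_{k+1}}\|\lesssim D/k$ gives a recursion
\[
a_{k+1}\le(1-c/k)a_k+O(1/k),
\]
and this is where it becomes delicate.

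\textbf{Main obstacle.} The drift term $O(1/k)$ is of the same order as the contraction gain $c/k$, so a naive Xu-type lemma yields only boundedness, not $a_k\to0$. I would first try to sharpen the path estimate. From the inclusion one gets $\|\pmb\rho_\epsilon-\pmb\rho_{\epsilon'}\|\le\frac{|\epsilon-\epsilon'|}{\epsilon}\|\pmb\rho_{\epsilon'}-\pmb{\rho^0}\|$, and $|\epsilon_k-\epsilon_{k+1}|/\epsilon_k\approx 1/(2k)$. I would then track the constants to get a factor $(1-\gamma/(2k))$ against a drift of order $D/(2k)$. If this still fails, I would fall back on a direct argument. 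By Theorem~\ref{thrm:conv} and Lemma~\ref{lem:cc}, every accumulation point lies in $\mathcal S$. It then suffices to show $\limsup_k\langle\pmb{\rho^0}-\bar{\pmb\rho},\pmb{\rho^k}-\bar{\pmb\rho}\rangle\le0$, which holds because cluster points are in $\mathcal S$ and by the projection inequality. Then derive, from the update and $-F(\bar{\pmb\rho})\in N_{\mathcal K}(\bar{\pmb\rho})$, a Halpern-type inequality
\[
\|\pmb{\rho^{k+1}}-\bar{\pmb\rho}\|^2\le(1-\beta_k)\|\pmb{\rho^k}-\bar{\pmb\rho}\|^2+2\beta_k\langle\pmb{\rho^0}-\bar{\pmb\rho},\pmb{\rho^{k+1}}-\bar{\pmb\rho}\rangle+e_k .
\]
The error $e_k$ is controlled by $\alpha_k\|F(\pmb{\rho^{k+1}})-F(\pmb{\rho^k})\|\,\|\pmb{\rho^{k+1}}-\bar{\pmb\rho}\|$, which I would show is $o(\beta_k)$ using the residual rate and $\|\pmb{\rho^{k+1}}-\pmb{\rho^k}\|\to0$. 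Xu's lemma ($\sum\beta_k=\infty$) then gives $\pmb{\rho^k}\to\bar{\pmb\rho}$, and uniqueness of the accumulation point follows.
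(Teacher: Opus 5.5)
Your Tikhonov-tracking route (Steps 1--2) does not close, as you suspected, and the proof does not need it. Set $d_k:=\|\pmb{\rho^k}-\pmb\rho_{\epsilon_k}\|$. In distance form the recursion is $d_{k+1}\le\sqrt{1-\theta_k}\,d_k+\Delta_k$, where $\sqrt{1-\theta_k}\approx 1-\frac{2\gamma-1}{2(k+\gamma)}$. Your path estimate only gives $\Delta_k\lesssim\frac{\|\bar{\pmb\rho}-\pmb{\rho^0}\|}{2(k+\gamma)}$. This yields $\limsup_k d_k\lesssim\|\bar{\pmb\rho}-\pmb{\rho^0}\|/(2\gamma-1)$, which is boundedness only. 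Forcing $\Delta_k=o(1/k)$ would need regularity of the path $\epsilon\mapsto\pmb\rho_\epsilon$ (for example, finite length) that a general monotone Lipschitz $F$ does not provide. Your fallback, however, is essentially the paper's proof:
\begin{itemize}
\item compare with $\pmb p=\Pi_{\mathcal S}(\pmb{\rho^0})$ using $\pmb p=\Pi_{\mathcal K}(\pmb p-\alpha_kF(\pmb p))$;
\item derive a Halpern-type recursion;
\item get $\limsup_k\langle\pmb{\rho^0}-\pmb p,\pmb{\rho^k}-\pmb p\rangle\le0$ because accumulation points lie in $\mathcal S$, combined with the projection inequality;
\item finish with a Xu-type lemma.
\end{itemize}

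The one real difference is how the recursion is derived, and it affects the error term.

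\emph{The paper's derivation.} It uses plain nonexpansiveness and expands the square. This keeps $\alpha_k^2L^2=1/(k+\gamma)$ inside the contraction factor $1-\theta_k$, with $\theta_k=2\beta_k-\beta_k^2-\alpha_k^2L^2\approx(2\gamma-1)/(k+\gamma)>0$. The leftover error $\beta_k^2\|\pmb{\rho^0}-\pmb p\|^2+2\alpha_k\beta_kLD\|\pmb{\rho^0}-\pmb p\|=O(k^{-3/2})$ is $o(\theta_k)$ without any information about the iterates.

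\emph{Your derivation.} You use the projection inequality plus monotonicity at $\pmb{\rho^{k+1}}$. This leaves an error of order $\alpha_kLD\|\pmb{\rho^{k+1}}-\pmb{\rho^k}\|$. Since $\alpha_k/\beta_k=\sqrt{k+\gamma}/(\gamma L)\to\infty$, knowing $\|\pmb{\rho^{k+1}}-\pmb{\rho^k}\|\to0$ is not enough; you need $\|\pmb{\rho^{k+1}}-\pmb{\rho^k}\|=o(k^{-1/2})$. This is available, but you must state it. For any $\pmb c\in N_{\mathcal K}(\pmb{\rho^k})$ one has $\pmb{\rho^k}=\Pi_{\mathcal K}(\pmb{\rho^k}+\alpha_k\pmb c)$. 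Hence
\[
\|\pmb{\rho^{k+1}}-\pmb{\rho^k}\|\le\beta_kD+\alpha_k\,r^{\mathrm{tan}}_{F,N_{\mathcal K}}(\pmb{\rho^k})=O(1/k)
\]
by Theorem~\ref{thrm:conv}, and therefore $e_k=O(k^{-3/2})=o(\beta_k)$. If you bound the step by the residual without the factor $\alpha_k$, you get only $O(k^{-1/2})$ and the argument fails.

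With this estimate made explicit, your fallback is a complete proof. The paper's expansion is simpler because it uses the residual theorem only for the $\limsup$ step.
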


\begin{proof}
By \cite[Theorem~2]{cai2026last}, we have
\[
r^{\mathrm{tan}}_{F,N_{\mathcal K}}(\pmb{\rho^k})
\longrightarrow0.
\]
We also demonstrated that every accumulation point of
\((\pmb{\rho^k})_{k\ge0}\) therefore belongs to \(\mathcal S\).
By Lemma~\ref{lem:solution-set-closed-convex}, the set
\(\mathcal S\) is nonempty, closed, and convex. Hence
$\pmb p:=\Pi_{\mathcal S}(\pmb{\rho^0})$
exists and is unique. We show that
$\pmb{\rho^k}\to\pmb p.$

Since \(\pmb p\in\mathcal S\), we have $-F(\pmb p)\in N_{\mathcal K}(\pmb p),$
and hence, for every \(k\),
$\pmb p=\Pi_{\mathcal K}\bigl(\pmb p-\alpha_kF(\pmb p)\bigr).$
Using this identity, the nonexpansiveness of
\(\Pi_{\mathcal K}\), and the same monotonicity--Lipschitz estimate
used in the proof of \cite[Lemma~1]{cai2026last}, we show that
\begin{align}
\|\pmb{\rho^{k+1}}-\pmb p\|_2^2
&\le
(1-\theta_k)
\|\pmb{\rho^k}-\pmb p\|_2^2
\nonumber
+
2\beta_k(1-\beta_k)
\left\langle
\pmb{\rho^0}-\pmb p,
\pmb{\rho^k}-\pmb p
\right\rangle
+
\varepsilon_k,
\label{eq:anchor-selection-short}
\end{align}
where
\[
\theta_k
:=
2\beta_k-\beta_k^2-\alpha_k^2L^2
\]
and
\[
0\le\varepsilon_k
\le
O\bigl(\beta_k^2+\alpha_k\beta_k\bigr).
\]
Indeed, since \(\pmb p\in\mathcal S\), we have
$\mathbf 0\in F(\pmb p)+N_{\mathcal K}(\pmb p),$
and therefore $-F(\pmb p)\in N_{\mathcal K}(\pmb p).$
Since \(N_{\mathcal K}(\pmb p)\) is a cone, it follows that
$-\alpha_kF(\pmb p)\in N_{\mathcal K}(\pmb p).$
By the characterization of the Euclidean projection,
$\pmb p=\Pi_{\mathcal K}\bigl(\pmb p-\alpha_kF(\pmb p)\bigr).$

Using the nonexpansiveness of \(\Pi_{\mathcal K}\), we obtain
\begin{align*}
\|\pmb{\rho^{k+1}}-\pmb p\|_2^2
&\le
\left\|
(1-\beta_k)(\pmb{\rho^k}-\pmb p)
+
\beta_k(\pmb{\rho^0}-\pmb p)
-
\alpha_k(F(\pmb{\rho^k})-F(\pmb p))
\right\|_2^2
\\
&=
(1-\beta_k)^2\|\pmb{\rho^k}-\pmb p\|_2^2
+
\beta_k^2\|\pmb{\rho^0}-\pmb p\|_2^2
+
\alpha_k^2\|F(\pmb{\rho^k})-F(\pmb p)\|_2^2
\\
&\quad
+
2\beta_k(1-\beta_k)
\langle \pmb{\rho^0}-\pmb p,\pmb{\rho^k}-\pmb p\rangle
\\
&\quad
-
2\alpha_k(1-\beta_k)
\langle (F(\pmb{\rho^k})-F(\pmb p)),\pmb{\rho^k}-\pmb p\rangle
-
2\alpha_k\beta_k
\langle (F(\pmb{\rho^k})-F(\pmb p)),\pmb{\rho^0}-\pmb p\rangle.
\end{align*}

By monotonicity of \(F\),
\[
\langle (F(\pmb{\rho^k})-F(\pmb p)),(\pmb{\rho^k}-\pmb p)\rangle
=
\left\langle
F(\pmb{\rho^k})-F(\pmb p),
\pmb{\rho^k}-\pmb p
\right\rangle
\ge0 \text{ and hence }
-2\alpha_k(1-\beta_k)
\langle (F(\pmb{\rho^k})-F(\pmb p)),(\pmb{\rho^k}-\pmb p)\rangle
\le0.
\]
Moreover, since \(F\) is \(L\)-Lipschitz,
$\|(F(\pmb{\rho^k})-F(\pmb p))\|_2\le L\|(\pmb{\rho^k}-\pmb p)\|_2.$
Since \(\mathcal K\) is compact and
\(\pmb{\rho^k},\pmb p\in\mathcal K\), there exists \(D<\infty\) such
that
\[
\|(\pmb{\rho^k}-\pmb p)\|_2\le D
\qquad
\forall k\ge0.
\]
Therefore,
\begin{align*}
-2\alpha_k\beta_k
\langle (F(\pmb{\rho^k})-F(\pmb p)),(\pmb{\rho^0}-\pmb p)\rangle
&\le
2\alpha_k\beta_k
\|(F(\pmb{\rho^k})-F(\pmb p))\|_2\|(\pmb{\rho^0}-\pmb p)\|_2
\\
&\le
2\alpha_k\beta_kLD\|(\pmb{\rho^0}-\pmb p)\|_2.
\end{align*}

Combining the preceding estimates gives
\begin{align}
\|\pmb{\rho^{k+1}}-\pmb p\|_2^2
&\le
\left(
(1-\beta_k)^2+\alpha_k^2L^2
\right)
\|\pmb{\rho^k}-\pmb p\|_2^2
+
2\beta_k(1-\beta_k)
\left\langle
\pmb{\rho^0}-\pmb p,
\pmb{\rho^k}-\pmb p
\right\rangle
+
\varepsilon_k,
\label{eq:anchor-selection-short}
\end{align}
where
\[
\varepsilon_k
:=
\beta_k^2
\|\pmb{\rho^0}-\pmb p\|_2^2
+
2\alpha_k\beta_kLD
\|\pmb{\rho^0}-\pmb p\|_2.
\]
Finally, defining
\[
\theta_k
:=
1-
\left(
(1-\beta_k)^2+\alpha_k^2L^2
\right)
=
2\beta_k-\beta_k^2-\alpha_k^2L^2,
\]
we may rewrite \eqref{eq:anchor-selection-short} as
\begin{align}
\|\pmb{\rho^{k+1}}-\pmb p\|_2^2
&\le
(1-\theta_k)
\|\pmb{\rho^k}-\pmb p\|_2^2
+
2\beta_k(1-\beta_k)
\left\langle
\pmb{\rho^0}-\pmb p,
\pmb{\rho^k}-\pmb p
\right\rangle
+
\varepsilon_k.
\end{align}

For completeness, expanding
\[
\left\|
(1-\beta_k)(\pmb{\rho^k}-\pmb p)
+
\beta_k(\pmb{\rho^0}-\pmb p)
-
\alpha_k(F(\pmb{\rho^k})-F(\pmb p))
\right\|_2^2.
\]
The term
\[
-2\alpha_k(1-\beta_k)
\langle (F(\pmb{\rho^k})-F(\pmb p)),(\pmb{\rho^k}-\pmb p)\rangle
\]
is nonpositive by monotonicity, while Lipschitz continuity and the
compactness of \(\mathcal K\) bound all remaining error terms by
\(O(\beta_k^2+\alpha_k\beta_k)\).

For the prescribed step sizes,
\[
\theta_k
=
\frac{2\gamma-1}{k+\gamma}
-
\frac{\gamma^2}{(k+\gamma)^2}.
\]
Hence
\[
0<\theta_k<1,
\qquad
\sum_{k=0}^{\infty}\theta_k=\infty,
\]
and
\[
\frac{\varepsilon_k}{\theta_k}\longrightarrow0,
\qquad
\frac{2\beta_k(1-\beta_k)}{\theta_k}
\longrightarrow
\frac{2\gamma}{2\gamma-1}.
\]

We next claim that
\[
\limsup_{k\to\infty}
\left\langle
\pmb{\rho^0}-\pmb p,
\pmb{\rho^k}-\pmb p
\right\rangle
\le0.
\]
Indeed, choose a subsequence attaining this limsup. Since
\(\mathcal K\) is compact, it has a further subsequence of \(\pmb {\rho^k}\) converging to
some accumulation point \(\pmb{\bar\rho}\). As shown above,
\(\pmb{\bar\rho}\in\mathcal S\). The characterization of the
projection
\[
\pmb p=\Pi_{\mathcal S}(\pmb{\rho^0})
\]
then gives
\[
\left\langle
\pmb{\rho^0}-\pmb p,
\pmb{\bar\rho}-\pmb p
\right\rangle
\le0,
\]
which proves the claim.

The preceding estimates allow
\eqref{eq:anchor-selection-short} to be written as
\[
\|\pmb{\rho^{k+1}}-\pmb p\|_2^2
\le
(1-\theta_k)\|\pmb{\rho^k}-\pmb p\|_2^2+\theta_k b_k,
\]
where $\limsup_{k\to\infty}b_k\le0.$
Since \(a_k\ge0\), \(0<\theta_k<1\), and
\(\sum_k\theta_k=\infty\).

The preceding estimates allow
\eqref{eq:anchor-selection-short} to be written as
\[
a_{k+1}
\le
(1-\theta_k)a_k+\theta_k b_k,
\qquad
a_k:=\|\pmb{\rho^k}-\pmb p\|_2^2,
\]
where
\[
\limsup_{k\to\infty}b_k\le0.
\]
We now show directly that \(a_k\to0\). Fix \(\varepsilon>0\).
Since \(\limsup_{k\to\infty}b_k\le0\), there exists \(K_\varepsilon\)
such that
\[
b_k\le\varepsilon
\qquad
\text{for every }k\ge K_\varepsilon.
\]
Consequently,
\[
a_{k+1}-\varepsilon
\le
(1-\theta_k)(a_k-\varepsilon),
\qquad
k\ge K_\varepsilon.
\]
Taking positive parts gives
\[
(a_{k+1}-\varepsilon)_+
\le
(1-\theta_k)(a_k-\varepsilon)_+.
\]
Therefore, for every \(n\ge K_\varepsilon\),
\[
(a_{n+1}-\varepsilon)_+
\le
(a_{K_\varepsilon}-\varepsilon)_+
\prod_{k=K_\varepsilon}^{n}(1-\theta_k).
\]
Since
\[
\prod_{k=K_\varepsilon}^{n}(1-\theta_k)
\le
\exp\left(
-\sum_{k=K_\varepsilon}^{n}\theta_k
\right)
\longrightarrow0,
\]
we obtain
\[
\limsup_{k\to\infty}a_k\le\varepsilon.
\]
As \(\varepsilon>0\) is arbitrary and \(a_k\ge0\), it follows that
\[
a_k\longrightarrow0.
\]
Hence,
\[
\pmb{\rho^k}
\longrightarrow
\pmb p
=
\Pi_{\mathcal S}(\pmb{\rho^0}).
\]
Hence every subsequence converges to the same point \(\pmb p\), and
\(\pmb p\) is the unique accumulation point.

\end{proof}

\section{Proof of Proposition \ref{prop:a}}\label{app:prop}

In this section we will provide a proof of Proposition \ref{prop:a}. Proposition \ref{prop:a} follows directly from the following proposition.

\begin{proposition}[A matching $\Omega(T^{-1/2})$ lower-bound example]
\label{prop:lower-bound-example}
Consider the finite-horizon mean-field game with horizon $H=2$, state space
\[
S=\{0,1,2\},
\]
action space
\[
A=\{L,R\},
\]
and initial state distribution
\[
\mu_0=\delta_0.
\]
Let the transition kernel be independent of the state-measure and defined by
\[
P_0(s'\mid s,a)
=
\begin{cases}
1,
& s=0,\ a=L,\ s'=1,\\
1,
& s=0,\ a=R,\ s'=2,\\
1,
& s\in\{1,2\},\ s'=s,\\
0,
& \text{otherwise}.
\end{cases}
\]
Let the rewards be
\[
r(0,\cdot,\mu)\equiv 0,
\]
and
\[
r_1(1,\cdot,\mu)\equiv-\mu(1),\qquad r_1(2,\cdot,\mu)\equiv-\mu(2).
\]
Then, the following hold:
\begin{enumerate}
\item The game satisfies Assumptions \eqref{weakmon}--\eqref{lip}.
\item Let $\mathcal K$ and $F$ be the occupation-flow polytope and the mean-field occupation operator
associated with this MFG. Let $\pmb{\rho^0}\in \mathcal K$ be the occupation flow defined by
\begin{align*}
&\rho_0^0(0,L)=1,\qquad \rho_0^0(0,R)=0,\qquad
\rho_1^0(1,L)=\rho_1^0(1,R)=\frac12,
\\&\rho_1^0(2,L)=\rho_1^0(2,R)=0,
\end{align*}
with all other coordinates equal to $0$.
\item Consider the anchored iteration
\[
\pmb{\rho^{k+1}}
=
\Pi_{\mathcal K}\Bigl((1-\beta_k)\pmb{\rho^k}+\beta_k\pmb{\rho^0}-\alpha_kF(\pmb{\rho^k})\Bigr),
\]
with
\[
\alpha_k=\frac{1}{6\sqrt{k+2}},\qquad \beta_k=\frac{2}{k+2}.
\]
Then, for every $k\ge 1$,
\[
r^{\tan}_{F,N_{\mathcal K}}(\pmb{\rho^k})\ge \frac{1}{\sqrt{3(k+2)}}.
\]
In particular,
\[
r^{\tan}_{F,N_{\mathcal K}}(\pmb{\rho^k})=\Omega(k^{-1/2}).
\]
\end{enumerate}
\end{proposition}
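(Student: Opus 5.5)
The plan has four steps: verify the assumptions, reduce the anchored iteration to a scalar recursion, lower-bound the tangential residual by a single explicit feasible direction, and then lower-bound the scalar recursion.

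\emph{Assumptions and structure of $\mathcal K$.} Every $\pmb\rho\in\mathcal K$ is described by $p:=\rho_0(0,L)\in[0,1]$, with $\rho_0(0,R)=1-p$, together with a split of mass $p$ over $(1,L),(1,R)$ and of mass $1-p$ over $(2,L),(2,R)$. Stage-$0$ coordinates at states $1,2$ vanish. Since $\mu_0=\delta_0$, $F$ vanishes on all stage-$0$ coordinates. On stage $1$,
\[
F(\pmb\rho)_{1,1,\cdot}=\mu_1(1)=p,\qquad F(\pmb\rho)_{1,2,\cdot}=\mu_1(2)=1-p .
\]
For Assumption \ref{weakmon}, the stage-$0$ sum is zero because $\mu_0$ is fixed. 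The stage-$1$ sum equals $-\sum_{s}(\mu_1(s)-\tilde\mu_1(s))^2\le 0$. Assumption \ref{ass:2} holds by construction. Assumption \ref{lip} holds with $L_r=1$ because $|\mu(s)-\tilde\mu(s)|\le\|\mu-\tilde\mu\|_{\mathrm{TV}}$. It follows that the MFE is unique, with $p^\star=1/2$, and the anchor has $p=1$.

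\emph{Reduction to a scalar recursion.} I will show by induction that every iterate lies in the symmetric family $\rho_1(1,L)=\rho_1(1,R)=p/2$ and $\rho_1(2,L)=\rho_1(2,R)=(1-p)/2$, with stage-$0$ mass only on state $0$. The pre-projection point $(1-\beta_k)\pmb\rho^k+\beta_k\pmb\rho^0-\alpha_kF(\pmb\rho^k)$ is symmetric in the stage-$1$ actions, and both $\mathcal K$ and the Euclidean norm are invariant under swapping these actions, so the projection is also symmetric. Any stray mass at stage-$0$ coordinates of states $1,2$ stays zero. The projection then reduces to a one-dimensional problem over $p\in[0,1]$: minimize a quadratic in $p$ whose coefficients come from the squared distances of the stage-$0$ and stage-$1$ blocks. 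This gives an explicit update
\[
p_{k+1}=\mathrm{clip}_{[0,1]}\Bigl(\tfrac12+(1-\beta_k)(p_k-\tfrac12)+\tfrac{\beta_k}{2}-c\,\alpha_k(2p_k-1)\Bigr)
\]
for an explicit constant $c$ (I expect $c=1/3$ from the weights $1,1$ on stage $0$ and $\tfrac12\cdot 2$ on stage $1$).

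\emph{Residual lower bound.} Take the direction $\pmb d$ that puts $\mp1$ on $(0,L),(0,R)$ and $\mp\tfrac12$ on each of $(1,\cdot)$ and $\pm\tfrac12$ on each of $(2,\cdot)$, with signs chosen so that $\pmb d$ moves $p$ toward $1/2$. This direction is feasible at any iterate with $p_k\in(0,1)$, and $\|\pmb d\|_2=\sqrt3$. For every $\pmb c\in N_{\mathcal K}(\pmb\rho^k)$ we have $\langle\pmb c,\pmb d\rangle\le 0$, hence
\[
\|F+\pmb c\|_2\ \ge\ \frac{-\langle F+\pmb c,\pmb d\rangle}{\sqrt3}\ \ge\ \frac{-\langle F,\pmb d\rangle}{\sqrt3}=\frac{|2p_k-1|}{\sqrt3}.
\]
The claimed bound therefore reduces to $e_k:=2p_k-1\ge (k+2)^{-1/2}$ for all $k\ge1$.

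\emph{Main obstacle: lower-bounding the recursion.} Unclipped, the recursion reads
\[
e_{k+1}=(1-\beta_k-2c\alpha_k)\,e_k+\beta_k .
\]
The anchor contributes $\beta_k=2/(k+2)$ and the contraction is about $2c\alpha_k\sim (k+2)^{-1/2}$, so the equilibrium scale is $e\approx\beta_k/(2c\alpha_k)\sim (k+2)^{-1/2}$, which is exactly the target. I would prove $e_k\ge (k+2)^{-1/2}$ by induction. The base case is the explicit value $e_1$. For the inductive step, substitute the bound and check the inequality
\[
\Bigl(1-\tfrac{2}{k+2}-\tfrac{2c}{6\sqrt{k+2}}\Bigr)\frac1{\sqrt{k+2}}+\frac{2}{k+2}\ \ge\ \frac1{\sqrt{k+3}} .
\]
This holds because $2/(k+2)$ dominates the loss terms of order $1/(k+2)$, using $\alpha_k=1/(6\sqrt{k+2})$; the lower bound on $e_{k+1}$ is valid because the coefficient of $e_k$ is positive. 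I also need to rule out clipping and ensure $e_k\le1$, so that the $(1-p_k)$-block stays nonnegative and $\pmb d$ remains feasible. That should follow because the update is a convex-type combination that keeps $p_k\in(1/2,1]$. The delicate part is getting the constant $c$ right from the projection, since the constant in the final bound depends on it.
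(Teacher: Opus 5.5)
Your proposal is correct and is essentially the paper's proof. It uses the same symmetric family $\pmb{\rho}(p)$ and the same recursion $p_{k+1}=(1-\beta_k-\tfrac{2}{3}\alpha_k)p_k+\beta_k+\tfrac{\alpha_k}{3}$, so your guess $c=1/3$ is right. The weights $1-\beta_k-\tfrac{2}{3}\alpha_k$, $\beta_k$, $\tfrac{2}{3}\alpha_k$ on $p_k$, $1$, $\tfrac12$ make your no-clipping argument work for $k\ge1$. The case $k=0$, where the first weight is negative, is covered by the explicit value $p_1=1-\alpha_0/3$. The induction on $2p_k-1\ge (k+2)^{-1/2}$ is also the paper's.

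The differences are minor. You obtain the symmetric projection from invariance under swapping stage-$1$ actions, whereas the paper minimizes over $u,v$ explicitly. You lower-bound the residual by testing against one feasible direction, whereas the paper computes it exactly as $\|\Pi_{T_{\mathcal K}}F\|_2$ at a relative-interior point; both give $|2p_k-1|/\sqrt{3}$.

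One unused aside is slightly off. The MFE is not unique: only $p^\star=1/2$ (the mean-field flow) is determined, and the split between stage-$1$ actions is free.
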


\begin{proof}
We divide the proof into Steps.

\medskip

\noindent
By the definition of the mean-field occupation operator,
\[
F_h(\pmb{\rho})(s,a)
=
-r\bigl(s,a,\mu_h^{\pmb{\rho}}\bigr),
\qquad
h\in\{0,1\}.
\]

We first verify Assumptions \ref{weakmon}--\ref{lip}. Let
$\pmb{\rho},\pmb{\eta}\in\mathcal K$. Since
\[
r(0,a,\mu)=0,
\qquad
r(s,a,\mu)=-\mu(s),
\quad
s\in\{1,2\},
\]
we have
\begin{align*}
&\sum_{h=0}^1\sum_{s\in S}\sum_{a\in A}
\left(
r\bigl(s,a,\mu_h^{\pmb{\rho}}\bigr)
-
r\bigl(s,a,\mu_h^{\pmb{\eta}}\bigr)
\right)
\left(
\rho_h(s,a)-\eta_h(s,a)
\right)
\\
&=
-\sum_{h=0}^1\sum_{s=1}^2
\left(
\mu_h^{\pmb{\rho}}(s)-\mu_h^{\pmb{\eta}}(s)
\right)
\sum_{a\in A}
\left(
\rho_h(s,a)-\eta_h(s,a)
\right)
\\
&=
-\sum_{h=0}^1\sum_{s=1}^2
\left(
\mu_h^{\pmb{\rho}}(s)-\mu_h^{\pmb{\eta}}(s)
\right)^2
\le 0.
\end{align*}
Thus, the reward function is weakly monotone. Notice that the
time-$0$ contribution is zero because
\[
\mu_0^{\pmb{\rho}}
=
\mu_0^{\pmb{\eta}}
=
\delta_0.
\]

Moreover, for every $s\in\{1,2\}$ and $a\in A$,
\[
\left|
r(s,a,\mu)-r(s,a,\nu)
\right|
=
|\mu(s)-\nu(s)|
\le
\|\mu-\nu\|_2,
\]
whereas
\[
\left|
r(0,a,\mu)-r(0,a,\nu)
\right|
=
0.
\]
Hence, Assumption \ref{lip} holds with Lipschitz constant $1$.

We next study the anchored iterations. After removing coordinates that
are identically zero for every element of $\mathcal K$, we use the
coordinate ordering
\[
\left(
\rho_0(0,L),
\rho_0(0,R),
\rho_1(1,L),
\rho_1(1,R),
\rho_1(2,L),
\rho_1(2,R)
\right).
\]
In these coordinates,
\[
\mathcal K
=
\left\{
(x,1-x,u,v,w,z):
\begin{array}{l}
0\le x\le 1,\\
u,v,w,z\ge 0,\\
u+v=x,\\
w+z=1-x
\end{array}
\right\}.
\]

For $p\in[0,1]$, define
\[
\pmb{\rho}(p)
:=
\left(
p,1-p,\frac p2,\frac p2,
\frac{1-p}{2},\frac{1-p}{2}
\right).
\]
The occupation measure specified in the statement satisfies
\[
\pmb{\rho^0}=\pmb{\rho}(1).
\]

The state distribution induced at time $1$ by $\pmb{\rho}(p)$ is
\[
\mu_1^{\pmb{\rho}(p)}(1)=p,
\qquad
\mu_1^{\pmb{\rho}(p)}(2)=1-p.
\]
Since the reward function is time homogeneous and
$\mu_0=\delta_0$, we have
\[
F_0\bigl(\pmb{\rho}(p)\bigr)(0,L)
=
F_0\bigl(\pmb{\rho}(p)\bigr)(0,R)
=
0.
\]
At time $1$,
\[
F_1\bigl(\pmb{\rho}(p)\bigr)(1,a)=p,
\qquad
F_1\bigl(\pmb{\rho}(p)\bigr)(2,a)=1-p,
\qquad
a\in A.
\]
Consequently,
\[
F\bigl(\pmb{\rho}(p)\bigr)
=
(0,0,p,p,1-p,1-p).
\]

We now show that the iterates remain in the one-dimensional set
$\mathcal M:=\left\{\pmb{\rho}(p):p\in[0,1]\right\}.$
Suppose that $\pmb{\rho^k}=\pmb{\rho}(p_k)$ for some $p_k\in[0,1]$, and define
\[a_k:=(1-\beta_k)p_k+\beta_k.\]
Then
\begin{align*}
\pmb y^k
&:=
(1-\beta_k)\pmb{\rho^k}
+
\beta_k\pmb{\rho^0}
-
\alpha_kF(\pmb{\rho^k})
\\
&=
\left(
a_k,1-a_k,c_k,c_k,d_k,d_k
\right),
\end{align*}
where
\[
c_k
=
\frac{a_k}{2}-\alpha_kp_k
\]
and
\[
d_k
=
\frac{1-a_k}{2}-\alpha_k(1-p_k).
\]

For a fixed $x\in[0,1]$, the minimizers of
\[
(u-c_k)^2+(v-c_k)^2
\]
subject to $u+v=x$ are
\[
u=v=\frac{x}{2}.
\]
Similarly, the minimizers of
\[
(w-d_k)^2+(z-d_k)^2
\]
subject to $w+z=1-x$ are
\[
w=z=\frac{1-x}{2}.
\]
Therefore, the projection of $\pmb y^k$ onto $\mathcal K$ is of the
form $\pmb{\rho}(x)$, where $x$ minimizes
\[
\varphi_k(x)
=
2(x-a_k)^2
+
2\left(\frac{x}{2}-c_k\right)^2
+
2\left(\frac{1-x}{2}-d_k\right)^2
\]
over $x\in[0,1]$.

Differentiating gives
\[
\varphi_k'(x)
=
6x-6a_k+4\alpha_kp_k-2\alpha_k.
\]
Thus, the unconstrained minimizer is
\[
x
=
a_k-\frac{2}{3}\alpha_kp_k+\frac{\alpha_k}{3}.
\]
It follows that
\begin{equation}
\label{eq:lower-bound-p-recursion}
p_{k+1}
=
\left(
1-\beta_k-\frac{2}{3}\alpha_k
\right)p_k
+
\beta_k
+
\frac{\alpha_k}{3}.
\end{equation}

We next verify that the unconstrained minimizer belongs to $(0,1)$.
For $k=0$, since $\beta_0=1$ and $p_0=1$,
\[
p_1
=
1-\frac{\alpha_0}{3}
=
1-\frac{1}{18\sqrt{2}}
\in\left(\frac12,1\right).
\]
For every $k\ge1$,
\[
1-\beta_k-\frac{2}{3}\alpha_k
=
1-\frac{2}{k+2}-\frac{1}{9\sqrt{k+2}}
>0.
\]
Suppose that $p_k\in(1/2,1)$. Then
\begin{align*}
p_{k+1}
&>
\frac12
\left(
1-\beta_k-\frac{2}{3}\alpha_k
\right)
+
\beta_k+\frac{\alpha_k}{3}
\\
&=
\frac12+\frac{\beta_k}{2}
>
\frac12.
\end{align*}
Moreover,
\begin{align*}
p_{k+1}
&<
1-\beta_k-\frac{2}{3}\alpha_k
+
\beta_k+\frac{\alpha_k}{3}
\\
&=
1-\frac{\alpha_k}{3}
<1.
\end{align*}
Therefore,
\[
p_k\in\left(\frac12,1\right)
\qquad
\text{for every }k\ge1.
\]
In particular, the projection does not reach the boundary of the
interval, and \eqref{eq:lower-bound-p-recursion} holds for every
$k\ge0$.

Define
\[
e_k:=p_k-\frac12.
\]
Subtracting $1/2$ from both sides of
\eqref{eq:lower-bound-p-recursion} gives
\[
e_{k+1}
=
\left(
1-\beta_k-\frac{2}{3}\alpha_k
\right)e_k
+
\frac{\beta_k}{2}.
\]
Using
\[
\alpha_k=\frac{1}{6\sqrt{k+2}},
\qquad
\beta_k=\frac{2}{k+2},
\]
we obtain
\begin{equation}
\label{eq:lower-bound-e-recursion}
e_{k+1}
=
\left(
1-\frac{2}{k+2}
-\frac{1}{9\sqrt{k+2}}
\right)e_k
+
\frac{1}{k+2}.
\end{equation}

We claim that
\begin{equation}
\label{eq:lower-bound-e-estimate}
e_k
\ge
\frac{1}{2\sqrt{k+2}}
\qquad
\text{for every }k\ge1.
\end{equation}
Indeed, for $k=1$,
\[
e_1
=
\frac12-\frac{1}{18\sqrt2}
>
\frac{1}{2\sqrt3}.
\]
Suppose that \eqref{eq:lower-bound-e-estimate} holds for some $k\ge1$,
and set
\[
n:=k+2.
\]
Since $n\ge3$,
\[
1-\frac{2}{n}-\frac{1}{9\sqrt n}>0.
\]
Hence, by \eqref{eq:lower-bound-e-recursion},
\begin{align*}
e_{k+1}
&\ge
\left(
1-\frac{2}{n}-\frac{1}{9\sqrt n}
\right)
\frac{1}{2\sqrt n}
+
\frac1n
\\
&=
\frac{1}{2\sqrt n}
-
\frac{1}{n\sqrt n}
+
\frac{17}{18n}.
\end{align*}
Furthermore,
\begin{align*}
&
\frac{1}{2\sqrt n}
-
\frac{1}{n\sqrt n}
+
\frac{17}{18n}
-
\frac{1}{2\sqrt{n+1}}
\\
&=
\left(
\frac{1}{2\sqrt n}
-
\frac{1}{2\sqrt{n+1}}
\right)
+
\frac1n
\left(
\frac{17}{18}-\frac{1}{\sqrt n}
\right)
\ge0.
\end{align*}
Therefore,
\[
e_{k+1}
\ge
\frac{1}{2\sqrt{n+1}}
=
\frac{1}{2\sqrt{k+3}}.
\]
This proves \eqref{eq:lower-bound-e-estimate} by induction.

It remains to compute the tangent residual. Fix $p\in(0,1)$. Since
all coordinates of $\pmb{\rho}(p)$ that are not identically zero on
$\mathcal K$ are strictly positive,
\[
\pmb{\rho}(p)\in\operatorname{ri}(\mathcal K),
\]
where $\mathrm{ri}(\mathcal K)$ denotes the relative interior.
The tangent space to $\mathcal K$ at $\pmb{\rho}(p)$ is
\[
T_{\mathcal K}
=
\left\{
(\xi,-\xi,\zeta_1,\zeta_2,\zeta_3,\zeta_4):
\zeta_1+\zeta_2=\xi,\ 
\zeta_3+\zeta_4=-\xi
\right\}.
\]
An orthogonal basis of $T_{\mathcal K}$ is
\[
\pmb t
=
\left(
1,-1,\frac12,\frac12,-\frac12,-\frac12
\right),
\]
together with
\[
\pmb d_1=(0,0,1,-1,0,0) \text{ and } \pmb d_2=(0,0,0,0,1,-1).
\]
Since
\[
F\bigl(\pmb{\rho}(p)\bigr)
=
(0,0,p,p,1-p,1-p),
\]
we have
\[
\left\langle
F\bigl(\pmb{\rho}(p)\bigr),\pmb d_1
\right\rangle
=
\left\langle
F\bigl(\pmb{\rho}(p)\bigr),\pmb d_2
\right\rangle
=
0.
\]
Moreover,
\[
\left\langle
F\bigl(\pmb{\rho}(p)\bigr),\pmb t
\right\rangle
=
2p-1
\]
and
\[
\|\pmb t\|_2^2=3.
\]
Consequently,
\[
\left\|
\Pi_{T_{\mathcal K}}
F\bigl(\pmb{\rho}(p)\bigr)
\right\|_2
=
\frac{|2p-1|}{\sqrt3}.
\]

Since
\[
N_{\mathcal K}\bigl(\pmb{\rho}(p)\bigr)
=
T_{\mathcal K}^{\perp},
\]
the tangent residual satisfies
\begin{align*}
r^{\tan}_{F,N_{\mathcal K}}
\bigl(\pmb{\rho}(p)\bigr)
&=
\operatorname{dist}
\left(
\pmb 0,
F\bigl(\pmb{\rho}(p)\bigr)
+
N_{\mathcal K}\bigl(\pmb{\rho}(p)\bigr)
\right)
\\
&=
\left\|
\Pi_{T_{\mathcal K}}
F\bigl(\pmb{\rho}(p)\bigr)
\right\|_2
\\
&=
\frac{|2p-1|}{\sqrt3}.
\end{align*}

Finally, for every $k\ge1$,
\[
p_k=\frac12+e_k
\]
with $e_k>0$. Therefore,
\begin{align*}
r^{\tan}_{F,N_{\mathcal K}}(\pmb{\rho^k})
&=
\frac{|2p_k-1|}{\sqrt3}
\\
&=
\frac{2e_k}{\sqrt3}
\\
&\ge
\frac{1}{\sqrt{3(k+2)}}.
\end{align*}
Hence,
\[
r^{\tan}_{F,N_{\mathcal K}}(\pmb{\rho^k})
=
\Omega(k^{-1/2}),
\]
as claimed.
\end{proof}
\begin{figure}[h]
    \centering
    \begin{minipage}{0.48\linewidth}
        \centering
        \includegraphics[width=\linewidth]{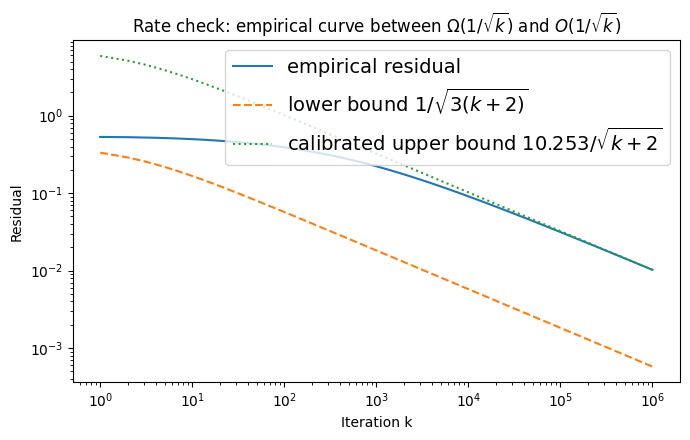}
        \caption{Empirical residual error rate for the example we use for Proposition \ref{prop:a}.}
        \label{fig:first}
    \end{minipage}
    \hfill
    \begin{minipage}{0.48\linewidth}
        \centering
        \includegraphics[width=\linewidth]{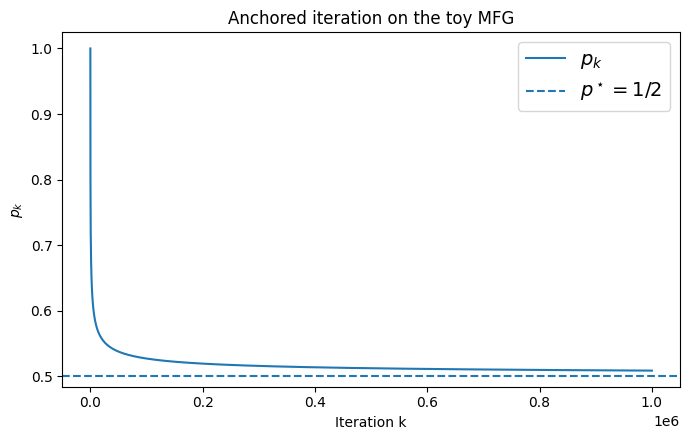}
        \caption{Comparison between the iterates produced by our algorithm for the example in Proposition \ref{prop:a} and the corresponding MFE.}
        \label{fig:second}
    \end{minipage}
\end{figure}

\end{document}